\DeclareFontFamily{U}{matha}{\hyphenchar\font45}
\DeclareFontShape{U}{matha}{m}{n}{
	<5> <6> <7> <8> <9> <10> gen * matha
	<10.95> matha10 <12> <14.4> <17.28> <20.74> <24.88> matha12
}{}
\DeclareSymbolFont{matha}{U}{matha}{m}{n}
\DeclareMathSymbol{\Lt}{3}{matha}{"CE}
\DeclareMathSymbol{\Gt}{3}{matha}{"CF}
\DeclareSymbolFont{mathc}{OML}{txmi}{m}{it}% txfonts
\DeclareMathSymbol{\varuu}{\mathord}{mathc}{117}
\DeclareMathSymbol{\varvv}{\mathord}{mathc}{118}
\DeclareMathSymbol{\varww}{\mathord}{mathc}{119}
\def\valpha{\text{\scalebox{0.88}[1.02]{$\alpha$}}}   
\def\vepsilon{\upvarepsilon}
\def\vchi{\text{\raisebox{0.6 \depth}{\scalebox{0.9}[1.1]{$\chi$}}}} 
\def\vkappa{\text{{\scalebox{0.86}[1.1]{$\kappa$}}}} 
\def\vnu{\text{{\scalebox{0.9}[1]{$\nu$}}}} 
\def\uppii{\text{\scalebox{0.8}[0.96]{$\uppi$}}}
\newcommand{\BC}{{\mathbb {C}}}
\newcommand{\BQ}{{\mathbb {Q}}} 
\newcommand{\BR}{{\mathbb {R}}} 
\newcommand{\BZ}{{\mathbb {Z}}}
\newcommand{\GL}{{\mathrm {GL}}}
\newcommand{\PGL}{{\mathrm {PGL}}}
\newcommand{\SL}{{\mathrm {SL}}}
\newcommand{\ra}{\rightarrow} 
\def\viint{	\int \hskip -5 pt \int}
\def\sumx{\sideset{}{^\star}\sum}
\def\sumh{\sideset{}{^h}\sum}
\def\mod{\mathrm{mod}\,  }
\def\BSA{  \boldsymbol{\EuScript{A}} }
\def\HA{ \,\widehat{\phantom{A} } \hskip -9.5pt \BSA  }
\def\SHA{\, \widehat{\phantom{A} } \hskip -7.5pt \BSA}
\def\nd{\mathrm{d}}
\def\trh{ \mathrm{trh}}
\newcommand{\RF}{{\mathrm {F}}}
\newcommand{\overbar}[1]{\mkern 3mu\overline{\mkern-3mu#1\mkern-1mu}\mkern 1mu}
\def\SC{\text{\raisebox{- 2 \depth}{\scalebox{1.1}{$ \text{\usefont{U}{BOONDOX-calo}{m}{n}C} \hskip 0.5pt $}}}}
\def\SE{\text{\raisebox{- 2 \depth}{\scalebox{1.1}{$ \text{\usefont{U}{BOONDOX-calo}{m}{n}E} \hskip 0.5pt $}}}}
\def\lp {\left (}
\def\rp {\right )}
\def\boldJ {\boldsymbol J}
\renewcommand{\Im}{{\mathrm{Im} }}
\renewcommand{\Re}{{\mathrm{Re} }}
\def\shskip{\hskip 0.5 pt}
\def\CaloO {\text{\raisebox{- 2 \depth}{\scalebox{1.1}{$ \text{\usefont{U}{BOONDOX-calo}{m}{n}O}  $}}}}
\def\SDH{\text{\raisebox{- 6 \depth}{\scalebox{1.06}{$ \text{\usefont{U}{dutchcal}{m}{n}H}  $}}}}
\def\SDI{\text{\raisebox{- 6 \depth}{\scalebox{1.06}{$ \text{\usefont{U}{dutchcal}{m}{n} I}  $}}}}
\newcommand{\red}[1]{\textcolor{red}{#1}}
\newcommand{\delete}[1]{}
\theoremstyle{plain}
\newtheorem{thm}{Theorem} \newtheorem{cor}[thm]{Corollary}
\newtheorem{lem}[thm]{Lemma}  \newtheorem{prop}[thm]{Proposition}
\newtheorem*{conj}{Conjecture}
\newenvironment{manualtheorem}[1]{%
	\manualtheoreminner
}{\endmanualtheoreminner}
\theoremstyle{remark} 
\newtheorem{remark}{Remark}[section]
\numberwithin{equation}{section}
\begin{document}

	\title[Twisted Spectral Large Sieve  for  $\mathrm{PGL}_2 (\mathbb{Z} {[i]})   \backslash \mathrm{PGL}_2 (\mathbb{C}) $]{{On the Twisted Spectral Large Sieve Inequality for $\mathrm{PGL}_2 (\mathbb{Z}[i]) \backslash \mathrm{PGL}_2 (\mathbb{C})$}}

\begin{abstract}
	  In this paper, we extend the twisted spectral large sieve inequalities of Deshouillers, Iwaniec, Luo, and Young from $\mathrm{SL}_2 (\mathbb{Z})$ onto $\mathrm{PGL}_2 (\mathbb{Z}[i])$. 
	  
\end{abstract}

\author{Zhi Qi}
\address{School of Mathematical Sciences\\ Zhejiang University\\Hangzhou, 310027\\China}
\email{zhi.qi@zju.edu.cn}

\thanks{The author was supported by National Key R\&D Program of China No. 2022YFA1005300 and National Natural Science Foundation of China No. 12071420.}

\subjclass[2020]{11F30, 11F72, 33C10}
\keywords{large sieve inequality, spectral Kuznetsov formula.}

\maketitle

\section{Introduction}

\subsection{Backgrounds}

Let $u_j (z)$ be an orthonormal basis of Hecke--Maass cusp forms for the modular group $\mathrm{SL}_2 (\BZ)$. Let $\lambda_j = s_j (1-s_j)$  and $\lambda_j (n)$ be the Laplace eigenvalue and the $n$-th Hecke eigenvalue of $ u_j (z) $. Set $s_j = 1/2+ i t_j$ ($t_j > 0$). Let $L(s, u_j)$ and $L(s, \mathrm{Sym}^2 u_j)$ be the standard and the symmetric square $L$-functions associated to $u_j (z)$. %Let $a_n$ be an arbitrary sequence of complex numbers. 

%The Selberg $\zeta$-function has zero  at each $s_j$ and t
The non-vanishing condition of a certain $\GL_2$ Rankin--Selberg convolution with $u_j (z)$ at the  special point $s_j$ %(which is a zero of the Selberg $\zeta$ function) 
arises in the Phillips--Sarnak deformation theory of cusp forms \cite{Phillips-Sarnak}. 
This motivates Deshouillers, Iwaniec, and Luo \cite{DI-Nonvanishing,Luo-Twisted-LS} to establish the following large sieve inequalities for the special twisted Hecke eigenvalues $ \lambda_{j} (n) n^{it_j}  $: 
\begin{align}\label{1eq: DI's bound}
	\sum_{t_j \leqslant T} \frac 1 {L (1, \mathrm{Sym}^2 u_j)}  \bigg| \sum_{ n \leqslant N}  a_{n} \lambda_{j} (n) n^{it_j} \bigg|^2 \Lt \big(T^2 +  N^{2}\big) (TN)^{\vepsilon} \sum_{ n \leqslant N}  |a_{n}|^2,
\end{align}
\begin{align}\label{1eq: Luo's bound, 1}
	\sum_{t_j \leqslant T} \hskip -1pt   \frac 1 {L (1, \mathrm{Sym}^2 u_j)}   \bigg| \sum_{ n \leqslant N}  a_{n} \lambda_{j} (n) n^{it_j} \bigg|^2 \hskip -2pt \Lt \hskip -1pt  \big(T^2 \hskip -1pt    +  \hskip -1pt    T^{3/2} N^{1/2} \hskip -1pt   + \hskip -1pt   N^{5/4} \big) (TN)^{\vepsilon} \hskip -2pt \sum_{ n \leqslant N} \hskip -2pt |a_{n}|^2,
\end{align}
for any complex $a_n$.
%where  the superscript $h$ means that the sum is harmonic weighted by $1/ L (1, \mathrm{Sym}^2 u_j)$. 

The reader is referred to \cite{DI-Nonvanishing,DIPS-Maass,Luo-Non-Vanishing,Luo-Weyl,Luo-2nd-Moment} on the non-vanishing problem---it has been settled by  Luo in  \cite{Luo-Weyl}. 

The large sieve inequalities are very useful for the study of moments of $L$-functions. For instance, by \eqref{1eq: Luo's bound, 1}    Luo proved (\cite[(13), (14)]{Luo-Twisted-LS}): 
\begin{align}\label{1eq: moments, Luo}
	\sum_{t_j \leqslant T} \hskip -2pt \left|L (s_j, u_j) \right|^4 \Lt \hskip -1ptT^{2+\vepsilon}, \hskip 8pt  \sum_{t_j \leqslant T} \hskip -2pt \left|L (s_j, u_j) \right|^6 \Lt \hskip -1pt  T^{9/4+\vepsilon}, \hskip 8pt  \sum_{t_j \leqslant T} \hskip -2pt \left|L (s_j, u_j) \right|^8 \hskip -1pt \Lt T^{5/2+\vepsilon};
\end{align} 
  Luo did not write down the bound in the middle as it follows from the other two by Cauchy--Schwarz. More recently, Young  \cite{Young-GL(3)-Special-Points} and  Chandee--Li  \cite{Chandee-Li-GL(4)-Special-Points} improved Luo's bounds $ T^{9/4+\vepsilon} $ and $T^{5/2+\vepsilon}$ into the optimal mean-Lindel\"of bound  $T^{2+\vepsilon}$, and moreover, for any fixed $\SL_3 (\BZ)$ or $\SL_4 (\BZ)$ Hecke--Maass form $\phi $, they proved
\begin{align}
	 \sum_{t_j \leqslant T}   \left|L (s_j, u_j \times \phi) \right|^2 \Lt   T^{2 +\vepsilon}. 
\end{align}
Their approach utilizes a refinement of Luo's large sieve in asymptotic form, due to Young \cite[Theorem 7.1]{Young-GL(3)-Special-Points},  which is conducive to further analysis using the   Vorono\"i summation for the Fourier coefficients of $\phi$. %Young's refined large sieve 

The main purpose of this paper is to extend the twisted large sieve inequalities of Deshouillers, Iwaniec, and Luo, as well as the refinement of Young from the modular group $\SL_2 (\BZ)$ to the Picard group $\PGL_2 (\BZ [i])$.  It should be remarked that an extra weaker error term arises in the complex context, which is satisfactory in the $\GL_2 \times \GL_3$ but {\it not} in the  $\GL_2 \times \GL_4$ setting. 

\subsection{Notation} \label{sec: notation}

Denote $\RF = \BQ (i)$ and $ \CaloO  = \BZ [i]$. Let $\CaloO^{\times}$ be the group of units and $\CaloO    / \CaloO^{\times}$ denote the set of   ideals, usually identified with the Gaussian integers in the first quadrant. 
The letters $m$, $n$, $c$,  and $q$ will be reserved for  (non-zero)  Gaussian integers in $\CaloO    $. Let $(n)$ denote the ideal $n \CaloO$. 

The letters $u$, $v$, $w$, and $z$ will be reserved for complex variables in $\BC$. Let $\nd z$ be the   Lebesgue measure on $\BC$. %\footnote{The standard $\nd z$ in the literature is {\it twice} the Lebesgue measure. }

Let $e [z] = \exp (2\pi i\Re (z))$.  For $  m,  n \in \CaloO $ and $c, q \in \CaloO \smallsetminus \{0\}$,      define 
\begin{align}\label{1eq: defn Kloosterman}
	S   (m, n ; c ) = \sumx_{   \valpha  (\mathrm{mod} \, c) } e \bigg[  \frac {  \valpha m +   \widebar{\valpha} n } {c} \bigg] ,
\end{align} 
\begin{align} \label{1eq: defn V}
		V_{q} (m, n; c) =	\mathop{\sum_{\valpha (\mathrm{mod}\, c)}}_{ (\valpha (q-\valpha), c) = (1) }  e \bigg[  \frac {  \widebar{\valpha} m +  \overline{q - \valpha} n } {c} \bigg] ,
\end{align}
where the $\star$ indicates the condition $(\valpha, c) = (1)$ and $ \widebar{\valpha}$ is given by $\valpha \widebar{\valpha}  \equiv 1 (\mathrm{mod} \, c)$. 
 
%for which we have the Weil bound:

%where $\tau (c)$ is the divisor function of ideals.

The   (unitary) characters on $\BC^{\times} $ are of the form     \begin{align}
	\vchi_{i \vkappa,  p} (z) = |z|^{  i \vkappa  } (z/|z|)^{  p}  , 
\end{align}  for   $ \vkappa   $ real and   $ p$ integral, %We identify the unitary dual of $\BC^{\times}$   with $\BSA  = \BR \times \BZ$.  
so we introduce  $\BSA     = \BR \times  \BZ$ to parameterize the (Mellin) unitary dual of $\BC^{\times}  $.  %Note that $ \vchi_{i \vkappa,  p} $ defines a  Gr\"ossencharakter on the group of non-zero ideas if and only if $4 | p$. Accordingly, define ${\BSA}_{\RF} = \BR \times  4 \shskip  \BZ$.  
Define $ \HA = \BR \times \BR / \pi \BZ$, which will be considered as the (Fourier) dual of $  \BSA $. 
Let $\nd \mu (\vkappa,  p) $ or $\nd \widehat{\mu} (r, \omega)$  denote the usual Lebesgue measure on  $\BSA    $ or $ \HA$ respectively.  For simplicity, write $ \vchi_{  p} (z) = \vchi_{0,  p} (z)$.

For $p \in \BZ $,  define 
\begin{align}\label{1eq: zeta}
	\zeta (s, p) =  \sum_{(n) \shskip \subset \CaloO } \frac {\vchi_{4p} (n) } {|n|^{2s}}, \qquad \text{($\Re (s) > 1$)}, 
\end{align}
to be the Hecke $\zeta$ function associated to the Gr\"ossencharakter $\vchi_{4p} : (n) \ra (n/|n|)^{4 p}$, and   define 
\begin{align}\label{1eq: tau s}
	\tau_{s, p} (n) = \sum_{ (a) (b) = (n)} \vchi_{4p} (a/b) |a/b|^{2s} .
	%\qquad \text{($s \in \BC$)} . % = |n|^{- 2 \vnu} \sum_{(d)| (n)} |d|^{ 4  \vnu} . 
\end{align}  
Note that $\tau_{- s, - p} (n) = \tau_{s, p} (n)$. %Let $\tau (n) = \tau_0 (n)$. 

For $(\vkappa, p) \in \BSA$, let $\uppii_{i \vkappa, p} $ be the unitary principle series of $\mathrm{PGL}_2 (\BC)$: the unique infinite dimensional constituent of the representation   unitarily induced from the character
\begin{align*}
\vchi_{i \vkappa,  p}: 	\begin{pmatrix}
		\sqrt{z} & v \\
		& 1/\sqrt{z}
	\end{pmatrix}   \ra %\vchi_{s, 2r} (\sqrt{z}) = 
	|z|^{i \vkappa} (z/|z|)^{  p}. 
\end{align*}
Note that  $\uppii_{-i \vkappa, -p} \approx   \uppii_{i \vkappa, p} $.
As the Selberg conjecture holds for $\mathrm{PGL}_2 (\CaloO)$, we do not consider complementary series. 

Let  $  L^2_{c} (\mathrm{PGL}_2 (\CaloO) \backslash \mathrm{PGL}_2 (\BC))  $ be the space of  cusp forms for $ \mathrm{PGL}_2 (\CaloO) $.  Let $\Pi_{c} $ denote the discrete   spectrum of the irreducible constituents of $L^2_{c} (\mathrm{PGL}_2 (\CaloO) \backslash \mathrm{PGL}_2 (\BC))$. 

It may be assumed that each  $\uppii \in \Pi_{c}$ is Hecke invariant. Let $\lambda_{\uppii} (n)$ be the Hecke eigenvalues of $ \uppii $. It is known that $\lambda_{\uppii} (n)$ are all real.  Note that the $\mathrm{PGL}_2 (\CaloO) $-invariance ensures that $ \lambda_{\uppii} (n) $ only depends on the ideal $(n)$.

For $\uppii \in \Pi_{c}$, let $  (\vkappa_{\uppii}, p_{\uppii})$ be the parameter $(\vkappa, p) \in \BSA$ such that  $ \uppii \simeq \uppii_{i \vkappa , p }$. %Define $v_{\uppii} = p_{\uppii} \, (\mathrm{mod} \, 4) $.  

Let $L(s, \uppii)$ and $L (s, \mathrm{Sym}^2 \uppii)$ be the standard and the symmetric square $L$-functions attached to $\uppii$.   %Note that  the root number of $ \uppii $ is equal to $(-1)^{ p_{\uppii}}$. 

In this paper, we shall mainly consider those $ \uppii $ with $   p_{\uppii} \equiv 0 \, (\mathrm{mod}\, 4)$. Let $ \Pi_{c}^{\natural} $ be the set of such $\uppii$. 
%For $v = 0, 1, 2, 3$, define $ \Pi_c^{v}  $ to be the  set of $\uppii$ with $v_{\uppii} = v$ (namely,  $ p_{\uppii} \equiv v \, (\mod 4)$). 
Define 
\begin{align*}
\Pi_{c}^{\natural}  (K, P) =  \big\{ \uppii \in \Pi_{c}^{\natural}  : |\vkappa_{\uppii} | \leqslant K,   |p_{\uppii} | \leqslant P \big\}, \qquad 
	\Pi_{c}^{\natural}  (T) =  \big\{ \uppii \in \Pi_{c}^{\natural}  : |\vkappa_{\uppii} |,   |p_{\uppii} | \leqslant T \big\}.   
\end{align*}
 
 As a convention, throughout the paper,  $\vepsilon  $ is arbitrarily small and its value  may differ from one occurrence to another.

%Finally, define the harmonic weights: 
%\begin{align}\label{3eq: omegas}
%	\omega (\vkappa,  p) =  \frac { 1 } {|\zeta_F(1+ 2 it,  m)|^2}, \qquad 	\omega_{\uppii} =   \frac {  1  } {  L(1, \mathrm{Sym^2} \uppii) }. 
%\end{align} 

%\subsection*{Convention} 

\subsection{Main Results}

%By the spectral Kuznetsov trace formula of Bruggeman and Motohashi \cite[Theorem 10.1]{B-Mo}, along with bounds in Proposition \ref{prop: quad form} for quadratic forms with Kloosterman sums, we shall establish the following spectral large sieve inequalities. 

The main results of this paper are the analogs   over $\RF = \BQ (i)$  of the twisted spectral large sieve inequalities of Deshouillers, Iwaniec, and Luo. 

\begin{thm}\label{thm: prelim} 
Let $K, P , N  \geqslant  1  $  and $\vepsilon > 0$ be real numbers.  Let $\boldsymbol{a} : \CaloO / \CaloO^{\times}   \ra \BC$ be a sequence. %Let $v = 0, 1, 2, 3$.  
Define %each of the two expressions
\begin{align}
	\label{1eq: sum, cusp}	
	\SC  (K, P, N) =	  \sum_{\uppii \shskip \in \Pi_{c}^{\natural}  (K, P) }  \frac { 1  } { L (1, \mathrm{Sym}^2 \uppii) } \bigg| \sum_{     |n|  \leqslant    { N} }  a_{(n)} \lambda_{\uppii} (n) \vchi_{i \vkappa_{\uppii},  p_{\uppii} } (n) \bigg|^2 . 
\end{align}	 Then
	\begin{align}\label{1eq: bound, weaker}
		\SC  (K, P, N) \Lt 	\big(K^2 + P^2\big) \big(  K P +   { N^{2 }}    \big) N^{\vepsilon} \sum_{    |n|  \leqslant    {N} } |a_{(n)}|^2  . 
	\end{align}
\end{thm}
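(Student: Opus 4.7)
The plan is to apply the spectral Kuznetsov trace formula for $\PGL_2(\CaloO) \backslash \PGL_2(\BC)$ after opening the square in $\SC (K, P, N)$. First I would majorize $\mathbf{1}_{\Pi_{c}^{\natural}(K, P)}$ by a smooth nonnegative function $h(\vkappa, p)$ on $\BSA$, for instance $h(\vkappa, p) = |\phi(\vkappa/K)|^{2} |\psi(p/P)|^{2}$ with even Schwartz bumps $\phi, \psi \geqslant 1$ on $[-1, 1]$. Expanding the modulus square gives
\[
\SC (K, P, N) \leqslant \sum_{|m|, |n| \leqslant N} a_{(m)} \overline{a_{(n)}} \sum_{\uppii \in \Pi_{c}^{\natural}} \frac{h(\vkappa_{\uppii}, p_{\uppii})}{L(1, \mathrm{Sym}^2 \uppii)} \, \lambda_{\uppii}(m) \lambda_{\uppii}(n) \, \vchi_{i \vkappa_{\uppii}, p_{\uppii}}(m/n),
\]
and the inner spectral average is precisely the cuspidal side of the Kuznetsov formula applied to the twisted test function $h_{m/n}(\vkappa, p) := h(\vkappa, p) \vchi_{i \vkappa, p}(m/n)$; the harmonic weight $1/L(1, \mathrm{Sym}^{2} \uppii)$ appears naturally.

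Applying Kuznetsov then rewrites the inner sum as the sum of (i) a diagonal $\delta_{(m) = (n)} \cdot C(h)$ with $C(h) := \int_{\BSA} h(\vkappa, p)(\vkappa^{2} + p^{2}) \, \nd \mu(\vkappa, p) \asymp KP (K^{2} + P^{2})$, which has exactly the order of the Weyl count for $\Pi_{c}^{\natural}(K, P)$; (ii) a continuous-spectrum (Eisenstein) integral; and (iii) a Kloosterman-sum term of the shape
\[
\sum_{(c)} \frac{S(m, n; c)}{|c|^{2}} \, \check h_{m/n}(mn/c^{2}),
\]
where $\check h_{u}$ denotes the Bessel transform on $\BC^{\times}$ associated to the $\PGL_2(\CaloO)$ Kuznetsov formula. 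Summing (i) over the diagonal $(m) = (n)$ yields directly the main term $(K^{2} + P^{2}) KP \sum_{|n| \leqslant N} |a_{(n)}|^{2}$ on the right-hand side of the claimed bound.

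For the Kloosterman piece, the key observation is that the unitary twist $\vchi_{i \vkappa, p}(m/n)$ is merely a phase and so does not disturb the Paley--Wiener-type localization of $\check h_{u}(z)$: it remains essentially supported on $|z| \gtrsim K^{2} + P^{2}$ with sup bound $O_{\vepsilon}((K^{2} + P^{2}) N^{\vepsilon})$ there. This confines $(c)$ to $|c|^{2} \Lt \sqrt{|mn|} / (K^{2} + P^{2})^{1/2}$, and combining this with the Weil--Esterman bound $|S(m, n; c)| \Lt |c|^{1+\vepsilon} (m, n, c)^{1/2}$ for Kloosterman sums over $\CaloO$, together with a Cauchy--Schwarz / Kloosterman-large-sieve argument in the $(m, n)$ variables, produces a bound of the desired shape $(K^{2} + P^{2}) N^{2} N^{\vepsilon} \sum_{|n| \leqslant N} |a_{(n)}|^{2}$. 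The Eisenstein piece (ii) is handled in parallel, using standard moment bounds for the Hecke $\zeta$-function $\zeta(s, p)$ on the critical line.

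The main technical obstacle is the transition-region analysis of the complex Bessel transform $\check h_{u}$ on $\BC^{\times}$, together with the bookkeeping of how the unitary twist $\vchi_{i \vkappa, p}(u)$ interacts with the Bessel kernel. Once one verifies that this twist only shifts phases without enlarging the Paley--Wiener support of $\check h_{u}$, the argument reduces to a careful assembly of the $\PGL_2(\CaloO)$ Kuznetsov formula, the localization and size estimates for $\check h_{u}$, and the Weil bound for Kloosterman sums over Gaussian integers, all of which are available from the classical literature or from the author's prior work on the $\PGL_2(\CaloO)$ Kuznetsov formula.
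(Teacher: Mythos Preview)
Your global strategy matches the paper's: smooth the spectral cutoff, open the square, and apply Kuznetsov with the twisted weight $h(\vkappa,p;\sqrt{m/n})=h(\vkappa,p)\cos(\vkappa\log|m/n|+p\arg(m/n))$, so that the diagonal produces $(K^2+P^2)KP\,\|\boldsymbol a_N\|_2^2$ and the task becomes bounding the Kloosterman part by $(K^2+P^2)N^{2+\vepsilon}\|\boldsymbol a_N\|_2^2$. (The Eisenstein term is nonnegative and may simply be left on the spectral side; no separate moment bound for $\zeta(s,p)$ is needed.)

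The gap is in your treatment of the Kloosterman piece. The assertion that the twist ``is merely a phase and so does not disturb the Paley--Wiener-type localization of $\check h_u$'' is not correct, and this is precisely where the work lies. The paper's Lemma~\ref{lem: integral repn} shows that the twist \emph{shifts} the integral representation: $\SDH(z;u)$ becomes an $(r,\omega)$-integral whose phase is expressed through $v=zu+z/u=2\pi(m+n)/c$ and $w=zu-z/u=2\pi(m-n)/c$, and any localization is governed by $|w|$, not by $|z|=2\pi|mn|^{1/2}/|c|$. In particular, for $|c|\leqslant N$ the only available size estimate is the trivial $\SDI=O(K^2+P^2)$ from \eqref{4eq: I(v;w), 1}, so the restriction on $c$ you describe does not materialize, and a Weil-based estimate then fails to close the $c$-sum. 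What the paper actually uses is twofold: (i) the integral representations \eqref{4eq: I(v;w), 1}--\eqref{4eq: I(v;w), 2}, whose phase is \emph{linear} in $m$ and $n$, so that after bringing the $(m,n)$-sum inside one may apply the \emph{orthogonality} (mean-value) bound \eqref{6eq: Kloosterman, 2},
\[
\mathop{\sum\sum}_{N<|m|,|n|\leqslant 2N} b_m\overline{b}_n\,S(m,n;c)\ \Lt\ (|c|^2+N^2)\,\|\boldsymbol b_N\|_2^2,
\]
rather than the Weil bound; this is what yields the factor $N^2$ for $|c|\leqslant N$; and (ii) for $|c|>N$, the integrated-by-parts form \eqref{4eq: I(v;w), 2}, whose extra factor $g(r,\omega;v,w)\asymp N^2/|c|^2$ provides the decay needed to sum the tail. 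Both the separation-of-variables mechanism and the switch from Weil to orthogonality are essential and are missing from your outline.
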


\begin{thm}\label{thm: large sieve, 2} 
	Let $T , N  \geqslant  1 $  and $\vepsilon > 0$.  Let $\boldsymbol{a} : \CaloO / \CaloO^{\times}   \ra \BC$. %Let $v = 0, 1, 2, 3$.  
	Define %each of the two expressions
	\begin{align}
		\label{1eq: sum, cusp, 2}	
		\SC  (T, N) =	  \sum_{\uppii \shskip \in \Pi_{c}^{\natural}  (T) }  \frac { 1  } { L (1, \mathrm{Sym}^2 \uppii) } \bigg| \sum_{     |n|  \leqslant    { N} }  a_{(n)} \lambda_{\uppii} (n) \vchi_{i \vkappa_{\uppii},  p_{\uppii} } (n) \bigg|^2 . 
	\end{align}	 Then for $ N > T$ we have 
	\begin{align}\label{1eq: bound, large sieve, 2}
		\SC  (T, N) \Lt \big(  T^3 N  +  T N^2  +  N^{5/2} \big) N^{\vepsilon} \sum_{    |n|  \leqslant    {N} } |a_{(n)}|^2  . 
	\end{align}
\end{thm}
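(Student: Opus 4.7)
The plan is to adapt Luo's argument for \eqref{1eq: Luo's bound, 1} to the Picard group setting, using the $\PGL_2 (\CaloO)$ Kuznetsov trace formula developed earlier in the paper. First I would open the square in \eqref{1eq: sum, cusp, 2}, writing
\begin{align*}
	\SC (T, N) \Lt \sum_{|m|, |n| \leqslant N} a_{(m)} \overline{a_{(n)}} \sum_{\uppii \shskip \in \Pi_c^{\natural}} \frac{h_T (\vkappa_\uppii, p_\uppii)}{L(1, \mathrm{Sym}^2 \uppii)} \lambda_\uppii (m) \lambda_\uppii (n) \vchi_{i \vkappa_\uppii, p_\uppii} (m/n),
\end{align*}
where $h_T$ is a smooth nonnegative majorant of the characteristic function of $\{|\vkappa|, |p| \leqslant T\}$, say $h_T (\vkappa, p) = \phi (\vkappa/T)^2 \phi (p/T)^2$ for a fixed Schwartz bump $\phi$. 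For each fixed pair $(m, n)$, the product $h_T (\vkappa, p) \vchi_{i \vkappa, p} (m/n)$ is a legitimate test function on $\BSA$, so Kuznetsov can be applied with this $(m, n)$-dependent spectral weight.

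Invoking the $\PGL_2 (\CaloO)$ Kuznetsov formula restricted to $\Pi_c^{\natural}$ expresses the inner spectral sum as a diagonal contribution supported on $(m) = (n)$, an Eisenstein continuous-spectrum contribution, and a sum of Gaussian Kloosterman sums
\begin{align*}
	\sum_{(c) \shskip \subset \CaloO} \frac{S (m, n; c)}{|c|^2} \, \Phi (m, n; c),
\end{align*}
where $\Phi (m, n; c)$ is the complex Bessel transform of $h_T (\vkappa, p) \vchi_{i \vkappa, p} (m/n)$ evaluated at $4 \pi \sqrt{mn}/c$. By the Weyl law for $\PGL_2 (\CaloO)$ (roughly $T^4$ forms in the spectral window) and the standard bound $L(1, \mathrm{Sym}^2 \uppii)^{-1} \Lt T^\vepsilon$, the diagonal plus Eisenstein together contribute at most $T^{4+\vepsilon} \sum_{|n| \leqslant N} |a_{(n)}|^2$; under the hypothesis $N > T$ this is absorbed into the first term $T^3 N$ of \eqref{1eq: bound, large sieve, 2}.

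The heart of the proof is the Kloosterman contribution. I would analyze $\Phi (m, n; c)$ by two-dimensional stationary phase in the integral over $(\vkappa, p) \in \BSA$: the combined phase coming from $\vchi_{i \vkappa, p} (m/n)$ and the oscillatory part of the complex Bessel kernel admits a unique saddle point determined by the polar data of $m/n$ and the ratio $|c|^2/|mn|$, yielding a sharp pointwise bound on $\Phi$. Combining this with the Weil bound $|S (m, n; c)| \Lt |(m, n, c)|^{1/2} \tau ((c)) |c|$ for the Gaussian Kloosterman sum, applying Cauchy--Schwarz in the $(m, n)$ variables (smoothing the coefficients as in Iwaniec's treatment), and summing the resulting expression dyadically in $|c|$, one should extract three contributions: a transitional range giving $T^3 N$, an intermediate range giving $T N^2$, and a large-$|c|$ tail controlled by Weil alone giving $N^{5/2}$.

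The main obstacle is the two-dimensional Bessel/stationary-phase analysis of $\Phi (m, n; c)$. Over $\BR$ the classical kernel $J_{2it} / \sinh \pi t$ has a well-developed uniform asymptotic theory that Luo exploits; over $\BC$ the complex Bessel kernel depends on the joint parameter $(\vkappa, p) \in \BR \times \BZ$, and the twist $\vchi_{i \vkappa, p} (m/n)$ oscillates in both components in a way coupled to the polar form of $m/n$. Extracting a sufficiently sharp saddle-point asymptotic in the transitional regime is precisely where the ``extra weaker error term'' alluded to in the introduction emerges, producing the middle term $T N^2$; this term is harmless in the $\GL_2 \times \GL_3$ application of \cite{Young-GL(3)-Special-Points} but prevents the bound from reaching the optimal $\GL_2 \times \GL_4$ threshold of \cite{Chandee-Li-GL(4)-Special-Points}.
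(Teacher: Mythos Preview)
Your proposal has a genuine gap: you treat the Eisenstein and Kloosterman contributions separately, bounding the former by $T^{4+\vepsilon}\sum|a_{(n)}|^2$ (or discarding it by positivity) and the latter by Weil plus stationary phase. This cannot succeed. After the Bessel analysis, the Kloosterman side of Kuznetsov carries a nonnegative main term
\[
\varSigma(\boldsymbol{a}) = T^2 \sum_{c} \frac{1}{|c|^4}\Big(\sum_n a_{(n)} S(n,0;c)\Big)^{\!2},
\]
which for the constant sequence $a_{(n)}\equiv 1$ is of order $T^2 N^2 \sum|a_{(n)}|^2$ (already from $c=1$). In the range $N^{1/4} < T < N$ this dominates every term in \eqref{1eq: bound, large sieve, 2}, so no direct estimation of the Kloosterman part can reach the stated bound. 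Your claim that the Eisenstein piece is $O(T^{4+\vepsilon})$ is likewise incorrect: by Proposition~\ref{prop: E(a)} it equals $\frac{1}{32}\varSigma(\boldsymbol{a})+O(N^{2+\vepsilon})$, hence of the same size.

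The missing idea, going back to Luo \cite{Luo-Twisted-LS}, is precisely that these two large pieces cancel. The paper (i) approximates the Bessel integral $\SDI(v,w)$ by $|\pi w|^2 h(w/2)$ --- this approximation step, not a saddle-point analysis of $\Phi$, is the true source of the $TN^2$ error (Remark~\ref{rem: Luo}); (ii) rewrites the resulting expression as a double sum over $(c,q)$ via the character sum $V_q(m,n;c)$; (iii) applies Poisson summation in $q$, whose zero frequency produces exactly $\pi^3\varSigma(\boldsymbol{a})$ (Proposition~\ref{prop: Z}), matching the Eisenstein main term; (iv) bounds the nonzero frequencies by a hybrid large sieve (Corollary~\ref{cor: hybrid LS}, Proposition~\ref{prop: S}). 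Finally, the $N^{5/2}$ does not come from a Weil tail: the intermediate bound has the shape $T^3\min\{N,T^3\}+TN^2+N^3/T^2$, and one exploits the monotonicity of $\SC(T,N)$ in $T$ by replacing $T$ with $T+\sqrt{N}$ to trade $N^3/T^2$ for $N^{5/2}$.
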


\begin{manualtheorem}{2$'$}\label{thm: large sieve}
Let the notation be as in Theorem {\rm\ref{thm: prelim}}. 
We have the bound
	\begin{equation}\label{1eq: bound, large sieve}
	\begin{split}
		\SC  (K, P, N) \Lt   	\big(   \big(K^2 + P^2\big)  K P   +   (K+P)^3 N  +  (K+P) N^2  +  N^{5/2} \big) N^{\vepsilon} \sum_{    |n|  \leqslant    {N} } |a_{(n)}|^2 .
	\end{split}
	\end{equation} 
\end{manualtheorem}

In the case $N > K+P$,  Theorem \ref{thm: large sieve} follows from Theorem \ref{thm: large sieve, 2} by the embedding $ \Pi_{c}^{\natural}  (K, P) \hookrightarrow \Pi_{c}^{\natural}  (K + P) $. In the case $N \leqslant  K+P$, Theorem \ref{thm: large sieve}  follows directly  from Theorem \ref{thm: prelim}.

Note that the bound in \eqref{1eq: bound, weaker}  is better in the $N$-aspect than Deshouillers and Iwaniec's  as in \eqref{1eq: DI's bound}, while the bound %(in particular, the third term)  
in \eqref{1eq: bound, large sieve}  is relatively weaker than Luo's  as in \eqref{1eq: Luo's bound, 1}.  %The third term on the right of  \eqref{1eq: bound, large sieve} or indeed t
The term $T N^2$ in \eqref{1eq: bound, large sieve, 2}, which corresponds to the $(K+P) N^2$ in \eqref{1eq: bound, large sieve},  arises in the approximation of certain trigonometric-hyperbolic functions. It is understood that  if one changes the setting  from $\BR$ to $\BC$, then the degree (of everything) needs to be doubled, % in order to achieve results of parallel strength, 
but unfortunately it is not the case for our approximation.   For the details see Remark \ref{rem: Luo}. 

%For the cases  of interest to us ($\GL_2 \times \GL_3$ and   $\GL_2 \times \GL_4$),  Theorem \ref{thm: large sieve} is stronger than  Theorem \ref{thm: prelim}. 

 Theorem \ref{thm: prelim} will be derived right after the application of Kuznetsov in \S \ref{sec: proof prelim}, while Theorem \ref{thm: large sieve, 2} will require more delicate analysis: simply speaking,  there is cancellation between the continuous spectrum and the Kloosterman part as observed by   Luo \cite{Luo-Twisted-LS}. However, instead of the Euler--Maclaurin formula used by   Luo  \cite{Luo-Twisted-LS}, we follow   Iwaniec,   Li, and  Young \cite{Iwaniec-Li-Ortho,Young-GL(3)-Special-Points} and apply   Poisson summation  after the application of Kuznetsov and a certain approximation of the Bessel integral.  Moreover, we remark that our analysis is facilitated by the choice of Gaussian test functions and their self-dual property under the Fourier transform.
 
 The reader might wonder: 
 \begin{itemize}
 	\item [(1)] Why do we work on  $\Pi_{c}^{\natural} (T) $ as in Theorem \ref{thm: large sieve, 2}? 
 	\item [(2)] Why do we use the embedding $ \Pi_{c}^{\natural}  (K, P) \hookrightarrow \Pi_{c}^{\natural}  (K + P) $ rather than work on $ \Pi_{c}^{\natural}  (K, P)  $ directly in the proof of Theorem \ref{thm: large sieve}? 
 \end{itemize}   Of course, the case of  $\Pi_{c}^{\natural} (T) $ is considered to be simpler, but the real reason is that in the case of $ \Pi_{c}^{\natural}  (K, P)  $ there would always be a loss of power of $K/P+P/K$ at each step of the analysis, and the accumulation of such losses will eventually yields a worse bound,  so one has {\it no} benefit at all to work on $ \Pi_{c}^{\natural}  (K, P)  $ instead of $\Pi_{c}^{\natural} (T) $. 
 The reader is referred to \cite[\S 1.5]{Qi-Spectral-LS} for the discussion of a simpler example. 

%\subsection*{Remarks and Discussions}

%	In   {\rm\eqref{1eq: sum, cusp}}  the definition of $\SC  (K, P, N)$   is unchanged if one restricts  to those $\uppii \in \Pi_c^{\natural} (K, P)$   {(\rm}of course, here $\Pi_c^{\natural} (K, P) = \Pi_c^{\natural} \cap \Pi_c (K, P)${\rm)} as the $n$-sum vanishes if $\vchi_{i \vkappa_{\uppii},  p_{\uppii} }   (n)$ is not a Gr\"ossencharakter. 

% % (see the $h ( 2\vkappa, 4 p )$ in the Kuznetsov trace formula {\rm\eqref{1eq: Kuznetsov}}). %See the $h ( 2\vkappa, 4 p )$ in the Kuznetsov trace formula {\rm\eqref{1eq: Kuznetsov}}. 

%Note that the quality of the upper bound \eqref{1eq: bound, large sieve} decreases as $K/P + P/K$ increases. It is the same phenomenon in the companion paper \cite{Qi-Spectral-LS}, but here the decrease is faster as the degree of $K/P + P/K$ is higher. 

%The readers may assume $K=P$ while reading to lighten the load of notation and analysis. Indeed, at some places, the analysis could be much simplified for $K=P$ while quite wasteful for $K/P + P/K$ very large. 

\delete{In the square case $K = P$ the bounds in \eqref{1eq: bound, weaker} and \eqref{1eq: bound, large sieve} read
\begin{align}\label{1eq: bound, weaker, 1}
	\sumh_{\uppii \shskip \in \Pi_{c}^{\natural}  (K, K) }  \bigg|   \sum_{     |n|  \leqslant    { N} }  a_{(n)} \lambda_{\uppii} (n) \vchi_{i \vkappa_{\uppii},  p_{\uppii} } (n) \bigg|^2   \Lt   \big(   K^4   +   
 K^2 N^2    \big) N^{\vepsilon} \sum_{    |n|  \leqslant    {N} } |a_{(n)}|^2 ,
\end{align}
\begin{equation}\label{1eq: bound, large sieve, 1}
	\begin{split}
		\sumh_{\uppii \shskip \in \Pi_{c}^{\natural}  (K, K) }  \bigg|   \sum_{     |n|  \leqslant    { N} }  a_{(n)} \lambda_{\uppii} (n) \vchi_{i \vkappa_{\uppii},  p_{\uppii} } (n) \bigg|^2   \Lt   \big(   K^4   + K^3 N   
		 +  K N^2  +   N^{5/2} \big) N^{\vepsilon} \sum_{    |n|  \leqslant    {N} } |a_{(n)}|^2 ,
	\end{split}
\end{equation} 
where the superscript $h$ means that the sum is harmonic weighted by $1/ L(1, \mathrm{Sym}^2 \uppii)$. Thus \eqref{1eq: bound, large sieve, 1} is stronger than \eqref{1eq: bound, weaker, 1} in the range $ K <  N < K^4 $. }

\subsection{Bounds for the Moments of $L$-functions with Special Twists}  
For $\uppii \in \Pi_{c}^{\natural}$ consider the  $L$-function of $\uppii$ with the special twist $ \vchi_{i \vkappa_{\uppii},  p_{\uppii} } $: 
\begin{align*}
	L (s, \uppii \times \vchi_{i \vkappa_{\uppii},  p_{\uppii} }) = \sum_{(n) \subset \CaloO} \frac { \lambda_{\uppii} (n) \vchi_{i \vkappa_{\uppii},  p_{\uppii} } (n) } {|n|^{2 s}}, \qquad \text{($\Re (s) > 1$)}; 
\end{align*}
of course it may also be  written as $L (s- i\vkappa_{\uppii}/2, \uppii \times \vchi_{ p_{\uppii} })$.  The twisted $L$-function admits analytic continuation to the whole $s$-plane and satisfies the functional equation 
\begin{align*}
	 \Lambda (s , \uppii \times \vchi_{i \vkappa_{\uppii},  p_{\uppii} }) =   \Lambda (1-s , \uppii \times \overline{\vchi}_{i \vkappa_{\uppii},  p_{\uppii} }), 
\end{align*}
where 
\begin{align*}
	\Lambda (s , \uppii \times \vchi_{i \vkappa_{\uppii},  p_{\uppii} }) = \pi^{-2 s} \Gamma (s) \Gamma  ( s + i\vkappa_{\uppii} + | p_{\uppii}|  ) L (s, \uppii \times \vchi_{i \vkappa_{\uppii},  p_{\uppii} }) . 
\end{align*} It is a special feature that the analytic conductor is dropped from $ |   i \vkappa_{\uppii} + p_{\uppii} |^4 $ down to $ |   i \vkappa_{\uppii} + p_{\uppii} |^2 $ due to the twist by $\vchi_{i \vkappa_{\uppii},  p_{\uppii} }$, so that $L (s , \uppii \times \vchi_{i \vkappa_{\uppii},  p_{\uppii} })$ behaves   like  the Hecke $\zeta$ function  for a Gr\"ossencharakter, although the coefficients are genuine $\GL_2$ objects. Consequently, for $ \uppii \in \Pi_{c}^{\natural} (K, P)$ one may approximate $ L (1/2 , \uppii \times \vchi_{i \vkappa_{\uppii},  p_{\uppii} }) $ very well by its partial sum over $|n| \leqslant (K+P)^{1/2+\vepsilon} $. 

One may prove 
\begin{equation}
	  \sum_{\uppii \shskip \in \Pi_{c}^{\natural}  (K, P) } \big| L (1/2, \uppii \times \vchi_{i \vkappa_{\uppii},  p_{\uppii} })  \big|^2 \Lt KP \big(K^2+P^2\big)^{1+\vepsilon},  
\end{equation}
by Theorem \ref{thm: prelim}, and, similar to   Luo's moment bounds in \eqref{1eq: moments, Luo}, 
\begin{equation}
   \sum_{\uppii \shskip \in \Pi_{c}^{\natural}  (K, P) } \big| L (1/2, \uppii \times \vchi_{i \vkappa_{\uppii},  p_{\uppii} })  \big|^4 \Lt (K+P)^{4+\vepsilon}, 
\end{equation}
\begin{align}
	   \sum_{\uppii \shskip \in \Pi_{c}^{\natural}  (K, P) } \big| L (1/2, \uppii \times \vchi_{i \vkappa_{\uppii},  p_{\uppii} })  \big|^6 \Lt   (K+P)^{9/2+\vepsilon}, 
\end{align}
\begin{align} 
	  \sum_{\uppii \shskip \in \Pi_{c}^{\natural}  (K, P) } \big| L (1/2, \uppii \times \vchi_{i \vkappa_{\uppii},  p_{\uppii} })  \big|^8 \Lt   (K+P)^{5+\vepsilon}, 
\end{align}
by Theorem  \ref{thm: large sieve}. In view of the embedding  $ \Pi_{c}^{\natural}  (K, P) \hookrightarrow \Pi_{c}^{\natural}  (K + P) $ in the argument above, the most optimistic bounds by the method are   $  O \big( (K+P)^{4+\vepsilon}\big)$.  Thus the mean-Lindel\"of bounds over $  \Pi_{c}^{\natural}  (T)$ are the best results we can hope for at present. 
 %In other words, the best we can hope  for at present is the mean-Lindel\"of bounds over $  \Pi_{c}^{\natural}  (T)$: 

\begin{conj}  The following mean-Lindel\"of bounds should hold{\rm:}
	\begin{align} 
		\sum_{\uppii \shskip \in \Pi_{c}^{\natural}  (T) } \big| L (1/2, \uppii \times \vchi_{i \vkappa_{\uppii},  p_{\uppii} })  \big|^6 \Lt   T^{4+\vepsilon}, \quad \sum_{\uppii \shskip \in \Pi_{c}^{\natural}  (T) } \big| L (1/2, \uppii \times \vchi_{i \vkappa_{\uppii},  p_{\uppii} })  \big|^8 \Lt   T^{4+\vepsilon},
	\end{align}
and  
\begin{align} 
	\sum_{\uppii \shskip \in \Pi_{c}^{\natural}  (T) } \big| L (1/2, \uppii \times \phi \times \vchi_{i \vkappa_{\uppii},  p_{\uppii} })  \big|^2 \Lt   T^{4+\vepsilon},
\end{align}
if   $\phi$ is a given cuspidal automorphic representation  for $\GL_3 (\CaloO)$ or $\GL_4 (\CaloO)$.
\end{conj}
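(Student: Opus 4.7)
The plan is to follow, in the complex-quadratic setting, the three-step template that Luo, Young, and Chandee--Li have used over $\BQ$: an approximate functional equation exploiting the conductor drop, Hecke linearization, and the twisted spectral large sieve in refined asymptotic form. Concretely, for each integer $k \geqslant 1$ and each $\uppii \in \Pi_{c}^{\natural}(T)$, the AFE combined with the drop of the analytic conductor from $|i \vkappa_\uppii + p_\uppii|^{4}$ down to $|i \vkappa_\uppii + p_\uppii|^{2}$ should give
\begin{align*}
L(1/2, \uppii \times \vchi_{i \vkappa_{\uppii}, p_{\uppii}} )^{k} \approx \sum_{|n| \shskip \leqslant \shskip T^{k/2 + \vepsilon}} \frac{d_{\uppii}^{(k)}(n)\, \vchi_{i \vkappa_{\uppii}, p_{\uppii}}(n)}{|n|}\, W\bigg( \frac{|n|^{2}}{T^{k}} \bigg),
\end{align*}
where $d_{\uppii}^{(k)}(n) = (\lambda_\uppii \ast \cdots \ast \lambda_\uppii)(n)$ is the $k$-fold Dirichlet convolution over $\CaloO / \CaloO^{\times}$. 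Iterated Hecke multiplicativity then expresses $d_\uppii^{(k)}(n)$ as a combination of individual $\lambda_\uppii(m)$ with $|m| \leqslant T^{k/2 + \vepsilon}$, and inserting this into $|L|^{2k} = |L^{k}|^{2}$ summed over $\Pi_{c}^{\natural}(T)$ produces, after Cauchy--Schwarz in the auxiliary summation indices, a quadratic form to which Theorem \ref{thm: large sieve} applies directly. This yields exactly the ``trivial'' spectral bounds $T^{9/2 + \vepsilon}$ for the sixth moment and $T^{5+\vepsilon}$ for the eighth moment that the paper already records; a parallel calculation for the second moment twisted by a $\GL_{d}(\CaloO)$ form opens $|L|^{2}$ to length $|n| \leqslant T^{d/2 + \vepsilon}$, giving $T^{9/2 + \vepsilon}$ for $d = 3$ and $T^{5+\vepsilon}$ for $d = 4$.

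To bridge the remaining gaps of $T^{1/2}$ and $T^{1}$, the next step is to upgrade Theorem \ref{thm: large sieve, 2} to an asymptotic identity, in the spirit of Young's Theorem 7.1 over $\BQ$, by retaining the principal off-diagonal contribution of the spectral Kuznetsov formula in closed form rather than absorbing it into an error. The Poisson summation over $\HA$ carried out in \S\ref{sec: proof prelim} would then be pushed one step further so that the Kloosterman term manifests as an oscillatory integral whose dual side is amenable to $\GL_{d+1}$ Vorono\"i summation over $\RF$. After Vorono\"i --- with $d = 2$ for the sixth-moment and the $\GL_{3}$-twisted cases, $d = 3$ for the eighth-moment and the $\GL_{4}$-twisted cases --- the transformed sum is fed back into Theorem \ref{thm: large sieve} or into the refined asymptotic with a considerably shorter dual length, which is what produces the requisite savings in the $\GL_{3}$ regime.

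The main obstacle is the complex-variable loss flagged in Remark \ref{rem: Luo}: the $\mathrm{PGL}_{2}(\BC)$ Bessel kernel does not admit as sharp a stationary-phase expansion as its $\SL_{2}(\BR)$ counterpart, and consequently the term $(K+P) N^{2}$ in \eqref{1eq: bound, large sieve} is genuinely weaker than what a naive doubling of Luo's real bound would predict. This loss is benign for the $\GL_{2} \times \GL_{3}$ conjecture, where the dual length $N \sim T^{3/2}$ sits well inside the range where $(K+P)^{3} N$ controls the estimate, but it is exactly what prevents the argument from closing for $\GL_{2} \times \GL_{4}$ and for the eighth moment: there the dual length $N \sim T^{2}$ lands at the boundary where $(K+P) N^{2}$, $(K+P)^{3} N$, and $N^{5/2}$ are all of size $T^{5}$ and so each individually needs to be improved by a factor of $T$. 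Overcoming this will likely require either a sharper uniform asymptotic for the $\mathrm{PGL}_{2}(\BC)$ Bessel integral in the transition range where $|\vkappa_\uppii| \asymp |p_\uppii|$ is comparable to $|n/c|^{1/2}$, or an unbalanced factorization such as $|L|^{8} = |L^{3}|^{2} \cdot |L|^{2}$ coupled with a hybrid large sieve that distributes the conductor savings unevenly between the two factors, so that neither one individually enters the problematic regime.
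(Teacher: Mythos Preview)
The statement you are addressing is a \emph{Conjecture}: the paper does not prove it and explicitly labels it as open, so there is no ``paper's own proof'' to compare against. Your proposal is likewise not a proof but a research programme, and you acknowledge as much in your final paragraph, where you correctly identify that the $(K+P)N^{2}$ term in \eqref{1eq: bound, large sieve} (equivalently the $TN^{2}$ in \eqref{1eq: C(a)+E(a), 2}) obstructs the $\GL_{2}\times\GL_{4}$ and eighth-moment cases at the critical length $N\sim T^{2}$. So there is no gap to name beyond the fundamental one: the conjecture is open, and nothing in your outline closes it.

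Two remarks on the programme itself. First, the ``upgrade of Theorem~\ref{thm: large sieve, 2} to an asymptotic identity in the spirit of Young's Theorem~7.1'' that you propose as a next step is already in the paper: it is Theorem~\ref{prop: C(a), 2}, with the structural term $S_{0}(\boldsymbol{a};X)$ of \eqref{1eq: S0(a;X)} playing exactly the role you describe. The discussion following that theorem matches your diagnosis---with $X\approx T^{1/2}$ the error terms are $O(T^{4+\vepsilon})$ for $N\leqslant T^{3/2+\vepsilon}$, which covers the $\GL_{2}\times\GL_{3}$ and sixth-moment regimes, and the paper says the $\GL_{3}(\CaloO)$ Vorono\"{i} route ``seems possible'' there while $TN^{2}$ blocks $\GL_{2}\times\GL_{4}$. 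Second, even in the cases you deem within reach, neither you nor the paper actually executes the Vorono\"{i} step over $\RF$; your outline stops where the real analytic work would begin. (Minor correction: the Poisson summation you reference is in \S\ref{sec: Poisson}, not \S\ref{sec: proof prelim}, and it is over the Gaussian integers modulo $c$, not over $\HA$.)
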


\subsection{Setup} 

Let    
\begin{equation}\label{1eq: weight k} 
	h ( \vkappa,  p ) = \exp \left(  - \Big( \frac {\vkappa } {K} \Big)^2 - \Big( \frac {p } {P } \Big)^2 \right) .
\end{equation}
Assume as we may that $a_{(n)}$ is a real sequence and that $ a_{(n)} = 0$   unless $  {N} <   |n|  \leqslant    {2N}  $. Define
\begin{align*}
	\| \boldsymbol{a}_N \|_2^{}   = \bigg(  \sum_{  {N} <  |n|  \leqslant    {2N} } |a_{(n)}|^2 \bigg)^{1/2}. 
\end{align*}
Subsequently, we shall deal with  the smoothed
\begin{align}
	\label{2eq: sum, cusp}	
	\SC  (\boldsymbol{a}) =	   \sum_{\uppii \shskip \in \Pi_{c}^{\natural} }  \frac { h  ( \vkappa_{\uppii},  p_{\uppii} ) } { L (1, \mathrm{Sym}^2 \uppii) } \bigg| \sum_{ n  }  a_{(n)} \lambda_{\uppii} (n) \vchi_{i \vkappa_{\uppii},  p_{\uppii} } (n) \bigg|^2 ,
\end{align}
and 
\begin{align}	\label{2eq: sum, Eis}	
	\SE (\boldsymbol{a}) =	 \frac 1 {\pi} 	\viint_{ \BSA } \frac {h (2\vkappa, 4 p)} { | \zeta (1+2 i \vkappa, 2 p)   |^2  } \bigg| \sum_{ n }  a_{(n)} \tau_{i \vkappa, p} (n) \vchi_{ 2 i \vkappa, 4 p} (n)  \bigg|^2  \nd \mu (\vkappa, p ) . 
\end{align}
Note that one may as well change $ \uppii   \in \Pi_{c}^{\natural}$ into $\uppii   \in \Pi_{c} $ in \eqref{2eq: sum, cusp}, as the $n$-sum vanishes if $\vchi_{i \vkappa_{\uppii},  p_{\uppii} }   (n)$ is not a Gr\"ossencharakter. This simple observation will be needed when we apply the Kuznetsov trace formula. 
%It is indeed natural to consider the family $ \Pi_c^{\natural} $ as one seeks cancellation with the continuous spectral $\SE (\boldsymbol{a})$, % {\rm(}{\rm)}
%observed by W. Z. Luo \cite{Luo-Twisted-LS},  %This   only happens if $ p_{\uppii} \equiv 0\, (\mathrm{mod} \, 4) $  as 
%while the Eisenstein series are induced from  Gr\"ossencharakters $ \vchi_{2 i\vkappa, 4 p} (n) $.

\subsection{Strategy for the Proof of Theorem \ref{thm: large sieve, 2}} 
In the rest of Introduction, set $K = P = T$.  
Define the bilinear form 
\begin{align}\label{2eq: Sigma (a)}
	\varSigma (\boldsymbol{a}) =  T^2  \mathop{\sum \sum}_{m, n} a_{(m)} a_{(n)} \sigma (m ,n) , \qquad \sigma (m, n) = \sum_{c }  \frac {S (m, 0; c) S (n, 0; c)} {|c|^4}. 
\end{align}
%where $P'$ is essentially the $\theta$-series in $P$: 
%\begin{align*}
%	P' = 1/\sqrt{\pi} \cdot \sum  \exp \big( - ( p/ P)^2 \big) =  P \sum  \exp \big( - (\pi P p)^2 \big) , 
%\end{align*} 

The idea for the proof of Theorem \ref{thm: large sieve, 2} is to extract the same main term---a multiple of $ \varSigma (\boldsymbol{a})     $---from the Eisenstein series contribution $\SE (\boldsymbol{a})$ and the Kloosterman part in $\SC  (\boldsymbol{a}) + \SE (\boldsymbol{a})$ after the Kuznetsov formula, and then do the cancellation.  More precisely, we shall prove the following asymptotic formulae.

\begin{prop}\label{prop: E(a)}
Let $ N >  T $. Then
	\begin{align}\label{2eq: asymptotic E(a)}
		\SE  (\boldsymbol{a}) =	 \frac {1} {32 } \varSigma (\boldsymbol{a}) +  O \big(  N^{2+\vepsilon} \|\boldsymbol{a}_{N} \|_2^2 \big) . 
	\end{align}
\end{prop}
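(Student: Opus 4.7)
My plan is to open the square in $\SE (\boldsymbol{a})$, expand the divisor and M\"obius Dirichlet series, and explicitly evaluate the resulting Gaussian Fourier integral over $\BSA$.

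First I would substitute
\begin{align*}
\tau_{i\vkappa, p}(n)\vchi_{2i\vkappa, 4p}(n) = \sum_{(a)(b)=(n)} \vchi_{4i\vkappa, 8p}(a)
\end{align*}
(which holds on ideals since $\vchi_{4i\vkappa, 8p}(\vepsilon) = 1$ for every unit $\vepsilon \in \CaloO^\times$) together with the M\"obius expansion $1/\zeta(1+2i\vkappa, 2p) = \sum_c \mu(c) \vchi_{-2i\vkappa, -4p}(c)/|c|^2$. After collecting characters, the $(\vkappa, p)$-integrand reduces to a sum of terms of the form $h(2\vkappa, 4p)\, \vchi_{2i\vkappa, 4p}(z)$, where $z = a_m^2 c_2/(a_n^2 c_1)$ ranges over ratios of the divisor and M\"obius parameters.

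Next I would evaluate the Gaussian Fourier integral
\begin{align*}
J(z) = \viint_{\BSA} h(2\vkappa, 4p) \vchi_{2i\vkappa, 4p}(z) \, \nd\mu(\vkappa, p).
\end{align*}
The $\vkappa$-integral is the classical Gaussian Fourier transform in $\log|z|$, giving $(K\sqrt{\pi}/2)\exp(-K^2(\log|z|)^2/4)$. Applying Poisson summation to the $p$-sum over $\BZ$ produces $(P\sqrt{\pi}/4)\sum_{q\in\BZ}\exp(-P^2(2\arg z - \pi q)^2/16)$, a sum of Gaussians peaked at the arguments of the four units in $\CaloO^\times$. The self-duality of the Gaussian under both the continuous and the discrete Fourier transforms is the essential input here. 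Consequently $J(z)$ is a superposition of four Gaussian bumps of scale $1/T$ centered at the units of $\CaloO$, decaying exponentially outside.

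The main term would then be extracted from the near-unit configurations. After a suitable truncation of the M\"obius and divisor sums (incurring a negligible error by the rapid decay of $J$), a prime-by-prime Euler-factor analysis---invoking the local form of the Ramanujan-sum identity $\sum_c S(m, 0; c) S(n, 0; c)/|c|^{2w}$ at $w = 2$---identifies the main contribution as $T^2\sigma(m, n)/32$. The constant $1/32$ combines the Gaussian normalization $(K\sqrt\pi/2)(P\sqrt\pi/4)(1/\pi) = KP/8$ with local factors coming from the Ramanujan identification at the four unit peaks. The remaining off-diagonal terms, estimated via Cauchy--Schwarz in $(m, n)$ together with the Gaussian-integer divisor-sum bound $\sum_{|n|\leqslant N} d(n)^k \Lt N^2 (\log N)^{2^k - 1}$, produce the error $O(N^{2+\vepsilon}\|\boldsymbol{a}_N\|_2^2)$.

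The hardest step will be the precise identification of the Ramanujan-sum structure $\sigma(m, n)$ and the pinning down of the exact constant $1/32$. A naive ``ideal-diagonal'' analysis (restricting to $(a_m^2 c_2) = (a_n^2 c_1)$) yields the wrong answer, namely a divisor-count $d((m, n)) \cdot \zeta(2, 0)/\zeta(4, 0)$; the correct $\sigma(m, n)$ emerges only after incorporating configurations with $z$ close to but not exactly at a unit. The self-dual Gaussian choice of $h$ is what renders this bookkeeping tractable.
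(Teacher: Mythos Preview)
Your overall strategy---open the square, evaluate the Gaussian spectral integral $J(z)$ explicitly, and extract the main term from the near-unit contributions---matches the paper's in spirit. But the paper takes a much more direct route, and your proposal has both a technical gap and an unnecessary complication that the paper's key identity avoids.

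\textbf{The key identity you are missing.} The paper does \emph{not} expand $1/\zeta$ by M\"obius and $\tau$ by divisors separately. Instead it first rewrites $\tau_{i\vkappa,p}(n)\vchi_{2i\vkappa,4p}(n) = \sigma_{2i\vkappa,2p}(n)$ and then applies the \emph{Ramanujan identity}
\[
\frac{\sigma_{1-s,p}(n)}{\zeta(s,p)} \;=\; \sum_{(c)\subset\CaloO} \frac{S(n,0;c)\,\vchi_{4p}(c)}{|c|^{2s}}
\]
(one application for each factor of $|\zeta|^{-2}$). This immediately produces
\[
X(m,n)=\mathop{\sum\sum}_{(b),(c)}\frac{S(m,0;b)S(n,0;c)}{|bc|^2}\, k(\log|b/c|)\,\theta(2\arg(b/c)),
\]
so the Ramanujan sums appear \emph{directly}, and the diagonal $(b)=(c)$ instantly yields $\varSigma(\boldsymbol a)$ with the correct constant $1/32 = k(0)\theta(0)/(4\pi)$. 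Your worry that ``a naive ideal-diagonal analysis yields the wrong answer'' evaporates in this parametrization: the diagonal is exactly the main term, and the off-diagonal $(b)\neq(c)$ is dispatched by Gaussian decay plus the lattice-point count $\#\{(b):|b-c|\ll |c|T^{\vepsilon}/T\}\ll |c|^2 T^{\vepsilon}/T^2$ together with \eqref{5eq: Ramanujan}. There is no Euler-product recombination to carry out.

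\textbf{The technical gap.} Your M\"obius expansion $1/\zeta(1+2i\vkappa,2p)=\sum_c\mu(c)\cdots$ is on the line $\Re(s)=1$, where the series does not converge absolutely; interchanging it with the $(\vkappa,p)$-integral and the divisor sums is not justified by ``rapid decay of $J$'' alone. The paper handles the analogous issue by invoking a truncated version of the Ramanujan identity (\`a la Titchmarsh, Lemma~13.2 of \cite{Titchmarsh-Riemann}): for $\Re(s)=1$, $|\Im(s)|\leqslant Y$, the $c$-sum can be cut at $\log|c|\leqslant Y^{\vepsilon}$ with error $O(1/Y)$. You would need an analogous truncation argument for your M\"obius sums, and it is not clear this is any easier than just using the Ramanujan identity in the first place.

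In short: replace your divisor--M\"obius double expansion by the single Ramanujan identity and its Titchmarsh-type truncation; then the diagonal is the whole story and both the constant and the error term fall out with no further combinatorics.
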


\begin{prop}\label{prop: C(a)}
 Let $ N >  T $. Then
 	\begin{align} \label{1eq: C(a)+E(a)}
			\SC  (\boldsymbol{a}) + \SE (\boldsymbol{a})  =	 \frac {1} {32 } \varSigma (\boldsymbol{a}) + O \bigg(   \bigg(  T^3 \min \big\{ N,   T^3 \big\} +   T N^2 + \frac { N^3} {T^2}   \bigg) N^{\vepsilon} \|\boldsymbol{a}_{N} \|_2^2 \bigg).   
	\end{align} 
\end{prop}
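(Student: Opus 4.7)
The plan is to apply the Kuznetsov trace formula to $\SC(\boldsymbol{a}) + \SE(\boldsymbol{a})$, to reduce the problem to the analysis of a sum of Kloosterman sums weighted by a Bessel integral, and then to extract from this Kloosterman piece precisely the main term $\frac{1}{32}\varSigma(\boldsymbol{a})$ that was already extracted from $\SE(\boldsymbol{a})$ in Proposition \ref{prop: E(a)}, following the strategy of Iwaniec--Li and Young.

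First, open the absolute squares in \eqref{2eq: sum, cusp} and \eqref{2eq: sum, Eis}, so that the cuspidal contribution carries the factor
$$\lambda_{\uppii}(m)\lambda_{\uppii}(n)\, \vchi_{i\vkappa_{\uppii}, p_{\uppii}}(m)\, \overline{\vchi_{i\vkappa_{\uppii}, p_{\uppii}}(n)} = \lambda_{\uppii}(m)\lambda_{\uppii}(n)\, \vchi_{i\vkappa_{\uppii}, p_{\uppii}}(m/n),$$
with an analogous identity for the Eisenstein integral. The twist $\vchi_{i\vkappa,p}(m/n)$ is then absorbed into the Kuznetsov test function, replacing $h(\vkappa,p)$ by $h(\vkappa,p)\vchi_{i\vkappa,p}(m/n)$. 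Applying the Kuznetsov formula for $\mathrm{PGL}_2(\CaloO)\backslash\mathrm{PGL}_2(\BC)$---after harmlessly enlarging $\Pi_c^{\natural}$ to $\Pi_c$ as observed just below \eqref{2eq: sum, Eis}---yields a diagonal piece supported on $(m)=(n)$, of size $O\!\left(T^4 N^{\vepsilon}\|\boldsymbol{a}_N\|_2^2\right)$ from the Plancherel measure on $\BSA$ (absorbed into the $T^3\min\{N,T^3\}\cdot N^{\vepsilon}$ error since $N>T$), together with an off-diagonal Kloosterman piece schematically of the shape
$$\sum_c \frac{1}{|c|^4}\sum_{m,n} a_{(m)}a_{(n)}\, S(m,n;c)\,\SB\!\!\left(\frac{\sqrt{mn}}{c};\,m/n\right),$$
where $\SB$ denotes the Bessel integral of $h(\vkappa,p)\vchi_{i\vkappa,p}(m/n)$ against the complex Kuznetsov kernel.

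The heart of the proof is the treatment of $\SB$. I would substitute the integral representation of the $\mathrm{PGL}_2(\BC)$ Bessel kernel, whose leading behaviour is a superposition of exponentials of the form $e[\pm 2\sqrt{mn}/c]$ with smooth amplitudes. Because $h$ is Gaussian, the spectral $(\vkappa,p)$-integral localizes to a window of width $\sim 1/T$ around a saddle point determined by $mn/c^2$, and the self-duality of $h$ under the Fourier transform simplifies the amplitude into a Gaussian profile of height $\sim T^2$ multiplied by $e[2\Re(\sqrt{mn}/c)]$. The approximation incurs a hyperbolic/trigonometric error which---as the authors emphasize in Remark \ref{rem: Luo}---is intrinsically worse than the doubling of the real-variable approximation would suggest, and this error is exactly what contributes the $TN^2\cdot N^\vepsilon$ piece. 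For the main exponential term, one opens $S(m,n;c)$ additively and applies Poisson summation in the dual variable, exactly as in \cite{Iwaniec-Li-Ortho, Young-GL(3)-Special-Points}. The zero dual frequency collapses $S(m,n;c)$ to $S(m,0;c)S(n,0;c)$, and a careful bookkeeping of the Gaussian and Plancherel constants produces the coefficient $T^2/32$, giving precisely the main term $\frac{1}{32}\varSigma(\boldsymbol{a})$ demanded by \eqref{1eq: C(a)+E(a)}.

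The remaining error arises from the non-zero dual frequencies: their oscillating Gaussian factor restricts $c$ to an effective range from which Weil's bound on the dual Kloosterman sums, Cauchy--Schwarz over $(m,n)$, and a sum over dual frequencies of size $\sim T$ together yield $N^3/T^2\cdot N^\vepsilon$. The principal obstacle I anticipate is the precise complex Bessel analysis: the $\mathrm{PGL}_2(\BC)$ Kuznetsov kernel couples both components of $(\vkappa,p)\in\BSA$ with the character twist $\vchi_{i\vkappa,p}(m/n)$, so the stationary-phase argument has to be carried out in two spectral variables simultaneously, and the unavoidable trigonometric-hyperbolic loss responsible for the $TN^2$ error is easy to mishandle. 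Pinning down the exact constant $\frac{1}{32}$ through the Plancherel measure on $\BSA$, the Gaussian normalizations, the Poisson-dual measure on $\HA$, and the self-duality of $h$ will be the most delicate bookkeeping of the proof.
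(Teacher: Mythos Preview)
Your overall strategy is correct and matches the paper's: Kuznetsov, then approximate the Bessel integral, then Poisson on the Kloosterman side to extract the main term $\frac{1}{32}\varSigma(\boldsymbol{a})$. You also correctly identify the source of the $TN^{2}$ error as the trigonometric--hyperbolic approximation loss (this is exactly Remark \ref{rem: Luo} and Step 3 of \S \ref{sec: reductions}).

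However, there is a genuine gap: your proposal does not account for the term $T^{3}\min\{N,T^{3}\}$. You note that the diagonal $T^{4}$ is absorbed into it, which is true, but you never produce the term itself. In the paper this term arises only after a truncate-and-balance argument that you do not mention. After the Bessel approximation and the $V_{q}$-decomposition of $S(m,n;c)\,e[(m+n)/c]$ (this is the precise form of ``opening $S(m,n;c)$ additively''), one restricts the new $c$-sum to $|c|\leqslant X$ at cost $T^{2}N^{2}/X^{2}$ via a hybrid large sieve (Corollary \ref{cor: hybrid LS}); then one applies Poisson in $q$, and bounds the non-zero dual frequencies by $T^{4}X^{2}$, again via the hybrid large sieve rather than Weil. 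Optimizing $X=\min\{\sqrt{N/T},\,T\}$ produces exactly $T^{3}\min\{N,T^{3}\}$. Without this truncation the non-zero frequency contribution is not controlled to the required precision.

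You have also misattributed the $N^{3}/T^{2}$ error. It does not come from the non-zero dual frequencies; it comes from the Bessel-approximation errors (Steps 1, 2, 4 of \S \ref{sec: reductions}, and the small-$c$ case of Step 3), well before Poisson is applied. The non-zero frequencies, after truncation, contribute the $T^{4}X^{2}$ piece described above. Finally, the dual frequencies $q$ are constrained by $|q|\leqslant |c|N^{\vepsilon}$, not by $T$, and the relevant tool for handling them is the hybrid large sieve, not the Weil bound.
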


It follows from Propositions  \ref{prop: E(a)} and \ref{prop: C(a)}   that  
\begin{align}
	\SC (T, N) \Lt	 \bigg(  T^3 \min \big\{ N,  T^3 \big\} +  T N^2 + \frac { N^3} {T^2}   \bigg) N^{\vepsilon}  \sum_{    |n|  \leqslant    {N} } |a_{(n)}|^2 ,
\end{align}
as long as $  N > T $. 
Since $ \SC (T, N) $ is non-decreasing in $T $, on replacing $T$ by $T + \sqrt{N }$ we arrive at  \eqref{1eq: bound, large sieve, 2} in Theorem \ref{thm: large sieve, 2}. 

\subsection{Refined Large Sieve}

Finally, we shall also establish (see in particular Proposition \ref{prop: S})  the analog  of  Young's refined large sieve \cite[Theorem 7.1]{Young-GL(3)-Special-Points}. 

\begin{thm}\label{prop: C(a), 2}
	Let $ 1 \leqslant X \leqslant T < N  $. Then
	\begin{align} \label{1eq: C(a)+E(a), 2}
		\SC  (\boldsymbol{a})   =    S_0 (\boldsymbol{a}; X) + O \bigg(   \bigg( T N^2 + \frac { N^3} {T^2}  + \frac {T^2 N^2} {X^2}    \bigg) N^{\vepsilon} \|\boldsymbol{a}_{N} \|_2^2 \bigg), 
	\end{align} 
	with  
	\begin{equation}\label{1eq: S0(a;X)}
		\begin{split}
			S_0 (\boldsymbol{a}; X) \Lt   T^4 N^{\vepsilon} \hskip -1pt  \sum_{|c| \leqslant X}  \frac 1 {|c|^2}  	\sum_{0 < |q| \leqslant |c|   N^{\vepsilon}     } \hskip -1pt   \frac 1 {|q|^2} \hskip -1pt    \viint_{\overbar{D}_{  N^{\vepsilon}/T }}  \hskip -1pt   \bigg|  \sum_{ n}    {a}_{(n)}  S (n, q; c) e \Big[ \frac {n} {c} u  \Big] \bigg|^2 \hskip -1pt \nd u ,
		\end{split}
	\end{equation}
	where $D_{\rho}$ is    the disc  of radius $\rho$ centered at the origin. 
\end{thm}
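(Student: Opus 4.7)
The proof follows the same outline as the proof of Proposition \ref{prop: C(a)} but truncates the Kloosterman-sum analysis after Poisson summation, rather than evaluating the resulting $q$-dual sum in closed form.  I first apply the Kuznetsov trace formula to the Gaussian-weighted combination $\SC(\boldsymbol{a}) + \SE(\boldsymbol{a})$; this produces a diagonal contribution (handled directly, as in Proposition \ref{prop: C(a)}) together with a Kloosterman-sum piece of shape
\begin{equation*}
\mathcal{K}(\boldsymbol{a}) \;=\; \sum_{m,n} a_{(m)}\, a_{(n)} \sum_{c} \frac{S(m,n;c)}{|c|^2}\, \widetilde{h}\!\left(\frac{mn}{c^2}\right),
\end{equation*}
where $\widetilde{h}$ denotes the Bessel integral transform of the Gaussian $h$ from \eqref{1eq: weight k}.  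By Proposition \ref{prop: E(a)}, the Eisenstein contribution $\SE(\boldsymbol{a})$ equals $\tfrac{1}{32}\varSigma(\boldsymbol{a}) + O(N^{2+\vepsilon}\|\boldsymbol{a}_N\|_2^2)$, so after subtracting $\SE(\boldsymbol{a})$ we are reduced to extracting an asymptotic for $\mathcal{K}(\boldsymbol{a}) - \tfrac{1}{32}\varSigma(\boldsymbol{a})$.

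For each $c$ I open $S(m,n;c)$ as the usual sum over $\valpha$ modulo $c$, and apply the Bessel-kernel approximation used in the proof of Proposition \ref{prop: C(a)}; this step introduces approximation errors of size $TN^2$ and $N^3/T^2$, as flagged in Remark \ref{rem: Luo}.  I then apply Poisson summation to the inner $n$-sum split into residue classes modulo $c$, producing a dual variable $q$ and a Fourier kernel of shape $\int e[nu/c]\, \widehat{\Phi}(u)\, \nd u$, in which $\widehat{\Phi}$ is a Gaussian concentrated on $|u| \ll N^{\vepsilon}/T$ and in which the dual $q$-sum is localised to $|q| \leqslant |c| N^{\vepsilon}$ by the rapid decay of the Gaussian test function.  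The term $q = 0$ matches $\tfrac{1}{32}\varSigma(\boldsymbol{a})$ exactly, by the same identification that underlies \eqref{2eq: asymptotic E(a)}, and so cancels against the Eisenstein main term.  Re-assembling the surviving $q \neq 0$ terms into the inner Kloosterman sum $S(n, q; c)$ and restricting $|c| \leqslant X$ yields the expression $S_0(\boldsymbol{a}; X)$ appearing in \eqref{1eq: S0(a;X)}.

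It remains to bound the complementary range $|c| > X$.  Here one applies Cauchy--Schwarz in $m$ and $n$ together with the Weil bound for $S(m,n;c)$, uses the rapid decay of $\widetilde{h}$ for large arguments, and sums $|c|^{-2}$ over $|c| > X$; combined with the Kuznetsov normalisation factor $T^4$ this produces the desired $T^2 N^2/X^2 \|\boldsymbol{a}_N\|_2^2$ error.  The main obstacle is the careful matching of the $q = 0$ Poisson term with $\tfrac{1}{32}\varSigma(\boldsymbol{a})$: the Bessel-kernel approximation must be arranged so that its principal contribution reconstructs the Eisenstein main term exactly in the range $|c| \leqslant X$, without leaving spurious lower-order remainders; this mirrors the corresponding step of \cite{Young-GL(3)-Special-Points} in the $\SL_2(\BZ)$ setting, and is facilitated in the present complex situation by the Gaussian choice of test function and the self-duality of its Fourier transform.
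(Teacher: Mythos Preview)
Your outline has the right high-level shape (Kuznetsov, Bessel approximation, Poisson, cancel the Eisenstein main term), but two of the central steps are misidentified and would not go through as written.

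First, the Poisson step: you say you apply Poisson to the inner $n$-sum modulo $c$, producing a dual variable $q$, and then ``re-assemble the surviving $q\neq 0$ terms into $S(n,q;c)$''. This is not coherent---if $n$ were the Poisson variable it would be gone, and there would be no mechanism to produce the \emph{product} $S(m,q;c)S(n,q;c)$ that appears in $S_0(\boldsymbol a;X)$. In the paper the Poisson summation is applied to a completely different variable: after the Bessel approximation one has the weight $h\bigl(\pi(m-n)/c\bigr)$ together with $S(m,n;c)e[(m+n)/c]$, and the key move is the arithmetic decomposition
\[
S(m,n;c)\,e\!\Big[\frac{m+n}{c}\Big]=\frac14\sum_{dq=c}V_q(m,n;d),
\]
which introduces an auxiliary modulus $q$. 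Poisson is then applied to the $q$-sum modulo the (new) $c$, and the crucial identity
\[
\sum_{\alpha\ (\mathrm{mod}\,c)}V_\alpha(m,n;c)\,e\!\Big[\frac{\alpha q}{c}\Big]=S(m,q;c)\,S(n,q;c)
\]
is what manufactures the two Kloosterman sums in \eqref{1eq: S0(a;X)}. Without this decomposition and identity you cannot arrive at the stated shape of $S_0(\boldsymbol a;X)$.

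Second, your treatment of the tail $|c|>X$ is not strong enough. Cauchy--Schwarz plus the Weil bound gives at best $\tau^2(c)|c|N^2\|\boldsymbol a_N\|_2^2$ for the $(m,n)$-sum, and over Gaussian integers $\sum_{|c|>X}|c|^{-2}$ does not even converge, so the claimed $T^2N^2/X^2$ does not follow. In the paper this tail is controlled \emph{after} the $V_q$-decomposition by opening $V_q$ and applying the hybrid large sieve of Corollary~\ref{cor: hybrid LS}; that is where the $T^2N^2/X^2$ genuinely comes from. Finally, the zero-frequency term $q=0$ does not match $\tfrac{1}{32}\varSigma(\boldsymbol a)$ exactly: Proposition~\ref{prop: Z} shows there is a residual $O\bigl((T^2N^2/X^2+T^4)N^\vepsilon\|\boldsymbol a_N\|_2^2\bigr)$, coming from the $\eta$-cutoff and from completing the $c$-sum.
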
 

Note that the error terms in \eqref{1eq: C(a)+E(a), 2} are all $O \big( T^{4+\vepsilon} \|\boldsymbol{a}_{N} \|_2^2 \big)$ as long as $N \leqslant T^{3/2+\vepsilon}$ if we choose $X \approx T^{1/2 }$. Therefore, given the asymptotic formula for $\GL_3 (\BC)$ Bessel functions in \cite[Theorem 16.6]{Qi-Bessel} (indeed for $\GL_n(\BC)$),  it seems possible to extend the $\GL_2 \times \GL_3$ mean-Lindel\"of bound of Young  \cite[Theorem 3.1]{Young-GL(3)-Special-Points} by applying  the  Vorono\"i summation for $\GL_3 (\CaloO)$. However, the term $T N^{2}$ exceeds $T^{4+\vepsilon}$ by $T$ for $N = T^{2+\vepsilon}$, so it is not yet possible to extend the $ \GL_2 \times \GL_4 $ result of   Chandee and   Li \cite{Chandee-Li-GL(4)-Special-Points}. 

%Nevertheless, Theorem \ref{thm: large sieve} is {\it not} sufficient for  the mean Lindel\"of hypothesis for $\GL_2 \times \GL_3$ Rankin--Selberg $L$-functions. 

%As  the mean Lindel\"of hypothesis for $\GL_2 \times \GL_3$ Rankin--Selberg $L$-functions on $\Pi_{c}  (K, K)$ (with special twists), the second term in \eqref{2eq: sum, cusp} is {\it not} 

\section{\texorpdfstring{Spectral Kuznetsov Trace Formula for $\mathrm{PGL}_2 (\BZ [i])$}{Spectral Kuznetsov Trace Formula for PGL\unichar{"2082}(Z[i])}} \label{sec: Kuznetsov}

Our main tool  is the spectral Kuznetsov trace formula of Bruggeman and Motohashi over the Gaussian field $\mathrm{F}$ \cite[Theorem 10.1]{B-Mo}. The version here in our set of notation is from \cite[Lemma 5]{Qi-Spectral-LS}.

%\subsection{Preliminaries on the Bessel Function} 

For $\vnu \in \BC$, $  p \in \BZ  $, and $z \in \BC \smallsetminus \{0\}$,  define 
\begin{equation}\label{0def: J mu m (z)}
	J_{\vnu ,   p} (z) = J_{\vnu + p }    (z)   J_{\vnu -  p  }    ({\widebar z} ) ,
\end{equation} 
\begin{equation}\label{0eq: defn of Bessel}
	\boldJ_{ \vnu,   p} (z)  =   \frac {2\pi^2} {\sin (\pi \vnu)}  ( J_{-\vnu,\shskip  -p} (   z) - J_{\vnu,   p} (    z)  ) ,   
\end{equation}
where $ J_{\nu} (z) $ is the Bessel function of the first kind  (see \cite{Watson}).  %It is $2 \pi$ in \cite{B-Mo} as their choice of additive character is $e (\mathrm{Re} (z))$ instead of the standard $e (2 \Re (z)) = e (z+\widebar{z})$. %Moreover, the Bessel functions in \cite{Qi-Bessel} are derived from the $\GL_2 \times \GL_1$ local functional equations,  and hence normalized according to Tate's thesis.  For example, it is $2 \mu =   \vnu$ and $m = 2 p$ in \cite{Qi-Bessel}.  
%The Bessel functions $J_{\vnu ,\shskip  p} (z)$ and  $\boldJ_{ \vnu,   p} (z)$ are even and real analytic in $z$, and (complex) analytic in $\vnu$.  
%It is understood that $\boldJ_{ \vnu,\shskip  p} (z)$ is defined by the limit in the non-generic case when  $ \vnu $ is integral. 
%For the definitions above, the reader is referred to  \cite[\S   15.3]{Qi-Bessel}, \cite[(6.21), (7.21)]{B-Mo} or \cite[(4.58), (9.26)]{B-Mo2}.\footnotemark

%\footnotetext{The Bessel functions for $\GL_2 (\BC)$ or $\SL_2 (\BC)$ are derived by different means in \cite{Qi-Bessel} and \cite{B-Mo,B-Mo2}.  %For convenience, we have used the notation in the former but with compromise.   
%	It should be remarked that it is unnecessary and inconvenient to  modify   $$J_{\vnu} (z) = \sum_{n=0}^{\infty} \frac {(-)^n (z/2)^{\vnu +2n}} {n! \Gamma (\vnu + n + 1)} $$ by the factor $(z/2)^{-\vnu}$ (to make it analytic on $\BC$) as in \cite[(6.21)]{B-Mo}. This would destroy the Bessel equation!  }

%For simplicity, define $\mathscr{H}$ to be the space of even functions

\begin{lem}\label{lem: Kuznetsov}
Let the notation be as in {\rm \S \ref{sec: notation}}.  Let $h (\vkappa, p)$ be an even function on $\BSA$   that admits an entire analytic continuation $ h (\vkappa + i \sigma, p) $ so that it  decays rapidly in both $ \vkappa$ and $ p $,  uniformly for $\sigma$ on   bounded intervals.  %The Kuznetsov formula for $\mathrm{PGL}_2 (\CaloO)$ is the following identity: 
	For $m , n \in \CaloO \smallsetminus \{0\}$, we have the identity{\rm:} 
	\begin{equation}\label{1eq: Kuznetsov} 
		\begin{aligned}
		  \sum_{\uppii \shskip \in \Pi_{c} }  &    \frac {  \lambda_{\uppii} ( m )     \lambda_{\uppii}  ( n )} {L(1, \mathrm{Sym^2} \uppii )}   h  ( \vkappa_{\uppii}, p_{\uppii} ) +   \frac 1 {\pi}   \viint_{\BSA}  
			\frac {\tau_{i \vkappa, p} (m )  \tau_{i \vkappa, p} ( n )} {|\zeta (1+2i \vkappa, 2p)|^2} h ( 2\vkappa, 4 p )  \shskip   \nd  \mu (\vkappa, p)  \\
			& =        \frac {1} { 16\pi^3 }  \delta_{(m),   (n)} \cdot  \SDH  + \frac 1 {64 \pi^3 }  \sum_{ \epsilon \shskip  \in   \CaloO^{\times} \hskip -1pt / \CaloO^{\times 2} } \sum_{c  \shskip \in   \CaloO \smallsetminus \{0\} } \frac {S  (  m , \epsilon  n  ; c  ) } { |c|^2  } \SDH  \bigg( \frac { 2\pi \sqrt{\epsilon m n} } {    c     }   \bigg),
		\end{aligned}
	\end{equation}
	where  $\delta_{(m),   (n)}$ is the Kronecker $\delta$ symbol for ideals, $ \SDH  $ and $ \SDH (z)$ are the Plancherel and Bessel integrals defined by  
	\begin{align}\label{1eq: defn Bessel integral}
		\SDH  = \hskip -2pt  \viint_{\BSA}   h (\vkappa, p)  (\vkappa^2 + p^2 )  \nd  \mu (\vkappa, p), \quad \SDH (z) = \hskip -2pt  \viint_{\BSA}   h (\vkappa, p)  \boldsymbol{J}_{  i \vkappa, p} ( z )  (\vkappa^2 + p^2 )  \nd  \mu (\vkappa, p)  . 
	\end{align} 
\end{lem}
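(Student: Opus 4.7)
The plan is to derive Lemma \ref{lem: Kuznetsov} as a direct restatement of \cite[Lemma 5]{Qi-Spectral-LS}, which is itself a transcription of the Bruggeman--Motohashi trace formula \cite[Theorem 10.1]{B-Mo} into the notation of \S\ref{sec: notation}. No new analytic work is needed; the task reduces to verifying that the spectral and geometric data match our normalizations.

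On the spectral side, I would first confirm the dictionary between the principal-series parameters $(i\vkappa, p) \in \BSA$ used here and the representation labels in \cite{B-Mo}, noting that the $\mathrm{PGL}_2(\CaloO)$ setup suppresses the central character. The weight $1/L(1, \mathrm{Sym}^2 \uppii)$ appears through its standard relation with the Petersson norm of a new vector. For the Eisenstein contribution, the Fourier coefficients of the minimal-parabolic series are $\tau_{i\vkappa, p}(n)$, and the factor $|\zeta(1 + 2i\vkappa, 2p)|^{-2}$ arises from the $L^2$-normalization. The Plancherel density $(\vkappa^2 + p^2)\, \nd \mu(\vkappa, p)$ is the Harish--Chandra $c$-function density for the unitary principal series of $\mathrm{PGL}_2(\BC)$.

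On the geometric side, the main subtlety is the inner sum over square classes $\epsilon \in \CaloO^{\times}/\CaloO^{\times 2} = \{[1], [i]\}$. This sum arises because the Hecke eigenvalues $\lambda_{\uppii}(n)$ and $\tau_{i\vkappa, p}(n)$ depend only on the ideal $(n)$, whereas the Kloosterman sum $S(m, n; c)$ depends on $n$ as an element. Passing from elementwise to ideal-theoretic data therefore requires a sum over the four units $\{\pm 1, \pm i\}$; pairs differing by a square of a unit yield the same Kloosterman contribution, collapsing the sum to the two square-class representatives $\epsilon \in \{1, i\}$. The Bessel kernel $\SDH(z)$ of the form \eqref{0eq: defn of Bessel}--\eqref{1eq: defn Bessel integral} is the standard $\mathrm{GL}_2(\BC)$ Bessel kernel obtained by Mellin inversion applied to $h(\vkappa, p)(\vkappa^2 + p^2)$; its shape $J_{\vnu + p}(z) J_{\vnu - p}(\bar z)$ reflects the Whittaker asymptotics on $\mathrm{GL}_2(\BC)$.

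The main obstacle, modest in character, is bookkeeping: tracking the numerical constants $1/(16 \pi^3)$ and $1/(64\pi^3)$ through the normalization conventions of \cite{B-Mo} (hyperbolic volume of $\mathrm{PGL}_2(\CaloO) \backslash \mathbb{H}^3$, the chosen Haar measures, and the residue of the Dedekind zeta function of $\RF$), and confirming that the analyticity and rapid-decay hypotheses on $h(\vkappa, p)$ suffice for the absolute convergence of both sides of \eqref{1eq: Kuznetsov}. Since this verification was carried out in \cite{Qi-Spectral-LS}, the identity \eqref{1eq: Kuznetsov} follows at once.
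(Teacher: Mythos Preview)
Your proposal is correct and matches the paper's approach exactly: the paper does not prove this lemma but simply cites it as \cite[Lemma 5]{Qi-Spectral-LS}, itself a reformulation of \cite[Theorem 10.1]{B-Mo}. Your additional commentary on the dictionary of normalizations is accurate and more detailed than what the paper provides, but the underlying argument is the same citation.
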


%The reader might have noted some differences between \eqref{1eq: Kuznetsov} and \cite[(10.1)]{B-Mo}.  The  $\epsilon$-sum on the right of \eqref{1eq: Kuznetsov}  is due to the translation from $\mathrm{PSL}_2 (\CaloO)$ to $\PGL_2 (\CaloO)$ (see   \cite[(3.17)]{CI-Cubic} and \cite[Proposition 1]{Venkatesh-BeyondEndoscopy}). The harmonic weight  is expressed here as   (a multiple of) $ 1/  L(1, \mathrm{Sym^2} \uppii)$ (see \cite[Lemma 5]{SH-Liu-Maass})  in place of their $|c_{V} (1)|^2$ (their $V$ is the underlying space of our $\uppi$) by standard Rankin--Selberg calculations.

\section{Preliminary Analysis for the Bessel Integral}

% \subsection{Expressions for the Bessel Integral} 

%Let the parameters $K, P  % \Gt  (K+P)^{\vepsilon} 
%\Gt 1$ be large.  
For $u \in \BC \smallsetminus \{0\}$, let    
\begin{equation}\label{4eq: choice of h} 
	h( \vkappa,  p; u ) =  h (\vkappa, p)  	\cos  (2 \vkappa \log |u| + 2 p \arg (u)  ),
\end{equation}
where $ h (\vkappa, p)  $ is defined as in \eqref{1eq: weight k} (not in \S \ref{sec: Kuznetsov}).
%Note that $\cos  ( \vkappa \log |u| + p \arg (u)  ) = \Re ( \vchi_{i \vkappa,  p} (u)   ) . $
In this section, we investigate the associated Bessel integral
\begin{equation}\label{5eq: Bessel H(z)}
	\SDH (z; u) = \hskip -2pt  \viint_{\BSA}   h (\vkappa, p; u)  \boldsymbol{J}_{  i \vkappa, p} ( z )  (\vkappa^2 + p^2 )  \nd  \mu (\vkappa, p)  .
\end{equation}
To this end, we follow closely the line of arguments in \S 4 of \cite{Qi-Spectral-LS}. Note that $ h( \vkappa,  p; 1 ) =  h (\vkappa, p) $, so Lemmas \ref{lem: integral repn} and \ref{lem: integral repn, 2} below may be viewed as a generalization of Lemmas 5 and 6 in \cite{Qi-Spectral-LS}.  However, unlike \cite{Qi-Spectral-LS}, the formulae in Lemmas  \ref{lem: integral repn} and \ref{lem: integral repn, 2} are too complicated and not directly applicable for our purpose---further analysis will be conducted in  the next section \S \ref{sec: further analysis} (in the square case $K=P=T$).

\begin{lem}\label{lem: integral repn}
	%	For $|z| > 1$, w
%	Let notation be as above. 
We have the formula 
	\begin{align}\label{4eq: integral H(z)}
		\SDH (z; u) =   \viint_{ {\SHA}  }  \cos  ( \Re  ( v       \trh  (r, \omega) - w       \trh'  (r, \omega) ) )  f  (r, \omega)    \nd \widehat{\mu} (r,  \omega ) , 
	\end{align}
	where 
	\begin{align}\label{4eq: w+-}
		v       = z u + z/u, \qquad   w       = zu - z/u, 
	\end{align}
	\begin{equation}\label{4eq: trh+-}
		\begin{split}
			&	\trh (r, \omega)    =       \cosh r \cos \omega + i \sinh r \sin \omega,  \quad \hskip -1pt    \trh'  (r, \omega)    =        \sinh r \cos \omega + i  \cosh r \sin \omega, 
		\end{split}
	\end{equation} 
	\begin{align}\label{4eq: f (r, w)}
		f  (r, \omega) =	 - k ''(r) \theta  (\omega) - k  (r) \theta '' (\omega) , 
	\end{align}
	\begin{align}\label{4eq: defn of h, theta}
		%h_{1/K} 
		k   (r )   =  \sqrt{\pi} K \exp \big( \hskip -1pt    -  (K r )^2   \big), \qquad 	%\theta_{1/P} 
		\theta  (\omega) =   \sqrt{\pi} P \sum  \exp \big( \hskip -1pt   - (P (\omega + \pi p ) )^2 \big)    . 
	\end{align} 
\end{lem}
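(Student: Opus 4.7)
The strategy is to extend Lemmas~5--6 of \cite{Qi-Spectral-LS}, which handle the untwisted case $u = 1$, by recognizing that the twisting factor $\cos(2\vkappa\log|u| + 2p\arg u)$ amounts to a translation on the Fourier-dual side $\HA$.  The plan has three stages: (a) invoke an integral representation of $\boldsymbol{J}_{i\vkappa, p}(z)$ as an oscillatory integral over $\HA$; (b) interchange integrations and evaluate the Fourier/Poisson transform of the Gaussian factor $h(\vkappa, p)(\vkappa^2 + p^2)$ to produce the weight $f(r,\omega)$; (c) collapse the shifted phase into the claimed form $\cos(\Re(v\trh - w\trh'))$ via a short algebraic identity.

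For (a)--(b), the untwisted version is essentially in \cite[Lemmas~5--6]{Qi-Spectral-LS}: $\boldsymbol{J}_{i\vkappa, p}(z)$ has a Fourier representation over $\HA$ whose phase is (essentially) $\exp(i\Re(2z\cosh(r+i\omega)))$ paired against the character $e^{-2i(\vkappa r + p\omega)}$.  After swapping the order of integration, the inner integral over $\BSA$ is the Fourier transform of $h(\vkappa, p)(\vkappa^2 + p^2)$: the Gaussian in $\vkappa$ self-dualizes to $k(r) = \sqrt{\pi}K\exp(-(Kr)^2)$; the Gaussian restricted to integers $p \in \BZ$ becomes, by Poisson summation, the theta-like sum $\theta(\omega) = \sqrt{\pi}P\sum_q \exp(-(P(\omega+\pi q))^2)$; and the multiplier $\vkappa^2 + p^2$ passes through as $-(\partial_r^2 + \partial_\omega^2)$, yielding $f(r,\omega) = -k''(r)\theta(\omega) - k(r)\theta''(\omega)$.

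For (c), I decompose
\begin{align*}
\cos(2\vkappa\log|u| + 2p\arg u) = \tfrac{1}{2}\bigl(e^{2i(\vkappa\log|u| + p\arg u)} + e^{-2i(\vkappa\log|u| + p\arg u)}\bigr),
\end{align*}
so that each exponential branch shifts the Fourier-dual variables by $\mp(\log|u|, \arg u)$ in the phase.  With $\zeta = r + i\omega$, the identity
\begin{align*}
v\trh(r,\omega) - w\trh'(r,\omega) = (zu+z/u)\cosh\zeta - (zu-z/u)\sinh\zeta = (z/u)e^{\zeta} + zu\, e^{-\zeta} = 2z\cosh(\zeta - \log u)
\end{align*}
shows that the two shifted phases combine into $\exp(\pm i\Re(v\trh - w\trh'))$, whose average is exactly $\cos(\Re(v\trh - w\trh'))$.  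The weight $f(r,\omega)$, being the Fourier transform of the untwisted $h(\vkappa, p)(\vkappa^2 + p^2)$, is unaffected by the twist, so \eqref{4eq: integral H(z)} results.

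The principal technical obstacle is the rigorous justification of the Fourier/Poisson inversion and the interchange of integrations, which hinges on the rapid decay of the Gaussian $h$ together with its entire analytic continuation in $\vkappa$ (and the summability properties in the discrete variable $p$); these are precisely the inputs that power the untwisted Lemmas~5--6 of \cite{Qi-Spectral-LS}, and no new difficulty arises from the bounded twist factor.
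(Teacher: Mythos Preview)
Your approach is essentially the same as the paper's: both insert an integral representation of $\boldJ_{i\vkappa,p}$, swap the order of integration (the paper justifies this explicitly via a uniform Bessel bound), evaluate the $(\vkappa,p)$-integral by Gaussian Fourier transform plus Poisson summation to produce $f(r,\omega)$, and absorb the twist as a translation in $(r,\omega)$. The paper carries this out in real polar coordinates with the explicit change of variables $r\to r-\log|u|$, $\omega\to\omega-\arg u$, whereas your identity $\trh=\cosh\zeta$, $\trh'=\sinh\zeta$ (with $\zeta=r+i\omega$) repackages the same computation more compactly.

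One point in step~(c) needs tightening: the two exponential branches of the twist do \emph{not} both yield the phase $\Re(v\,\trh-w\,\trh')$. The branch that shifts by $+\log u$ gives
\[
2z\cosh(\zeta+\log u)=zu\,e^{\zeta}+(z/u)\,e^{-\zeta}=v\,\trh(r,\omega)+w\,\trh'(r,\omega),
\]
not $v\,\trh-w\,\trh'$. The two resulting integrals coincide only after invoking the symmetry $(r,\omega)\to(-r,-\omega)$, under which $\trh$ is fixed, $\trh'$ changes sign, and $f(r,\omega)$ is even (both $k$ and $\theta$ are even). The paper uses this same parity implicitly when it writes only one of the two shifted cosine terms. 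With that adjustment your argument is complete.
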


\begin{lem}\label{lem: integral repn, 2}
	
	We have the formula 
	\begin{align}\label{4eq: integral H(z), 2}
		\SDH (z; u)    =       \viint_{ {\SHA}  }    \cos  ( \Re  ( v       \trh  (r, \omega) - w       \trh'  (r, \omega) ) )   g ( r, \omega; v      ,  w      ) k  (  r) \theta  (  \omega)    \nd \widehat{\mu} (r,  \omega ) ,
	\end{align} 
	where $g ( r, \omega; v      ,  w      )$ is defined to be
	\begin{equation}\label{4eq: f (w; r w)}
		\begin{split}
		  (\sinh^2 r      +       \sin^2 \omega  ) 	  | v      |^2    +   (\cosh^2 r      -      \sin^2 \omega  )  | w      |^2  
		& - \frac 1 2 \Re  (( \sinh 2 r + i \sin 2 \omega ) v \overbar{w} )       . 
			\end{split}
	\end{equation}  
\end{lem}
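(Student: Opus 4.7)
The plan is to derive \eqref{4eq: integral H(z), 2} from \eqref{4eq: integral H(z)} of Lemma \ref{lem: integral repn} by two integrations by parts, moving the second-order differential operator $\partial_r^2 + \partial_\omega^2$ from the weight $f(r,\omega) = -k''(r)\theta(\omega) - k(r)\theta''(\omega)$ onto the cosine factor $\Phi(r,\omega) = \cos(\Re(v \trh(r,\omega) - w \trh'(r,\omega)))$. The boundary contributions are harmless: $k(r)$ is a Gaussian decaying faster than any polynomial in $r$, and $\theta(\omega)$ is $\pi$-periodic so the $\omega$-integral over $\BR/\pi\BZ$ has no boundary. Since $\Phi$ and its first and second derivatives grow at most exponentially in $r$, the Gaussian decay of $k$, $k'$, $k''$ absorbs them and the integration by parts is legitimate. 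One obtains
\[
\SDH(z;u) = -\iint_{\SHA} \bigl(\partial_r^2 + \partial_\omega^2\bigr)\Phi(r,\omega)\cdot k(r)\theta(\omega)\,\nd\widehat{\mu}(r,\omega).
\]

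The main calculation is to show that $(\partial_r^2 + \partial_\omega^2)\Phi = -g(r,\omega;v,w)\,\Phi$. Direct differentiation of \eqref{4eq: trh+-} gives
\[
\trh_r = \trh', \quad \trh'_r = \trh, \quad \trh_\omega = i\trh', \quad \trh'_\omega = i\trh,
\]
from which $\trh_{rr} + \trh_{\omega\omega} = 0$ and $\trh'_{rr} + \trh'_{\omega\omega} = 0$, i.e.\ both $\trh$ and $\trh'$ are harmonic in $(r,\omega)$. Writing $\psi = \Re(v\trh - w\trh')$, we get $\psi_{rr} + \psi_{\omega\omega} = 0$, while $\psi_r = \Re(v\trh' - w\trh)$ and $\psi_\omega = \Re(i(v\trh'-w\trh)) = -\Im(v\trh'-w\trh)$. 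Applying the chain rule to $\Phi = \cos\psi$,
\[
\bigl(\partial_r^2 + \partial_\omega^2\bigr)\cos\psi = -(\psi_r^2+\psi_\omega^2)\cos\psi - (\psi_{rr}+\psi_{\omega\omega})\sin\psi = -\bigl(\psi_r^2+\psi_\omega^2\bigr)\Phi.
\]

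It remains to identify $\psi_r^2 + \psi_\omega^2$ with the expression \eqref{4eq: f (w; r w)}. Setting $C = v\trh' - w\trh$, the relations above yield $\psi_r^2+\psi_\omega^2 = (\Re C)^2 + (\Im C)^2 = |C|^2$. Expanding,
\[
|v\trh'-w\trh|^2 = |v|^2|\trh'|^2 + |w|^2|\trh|^2 - 2\Re\!\bigl(v\overbar{w}\,\trh'\overbar{\trh}\bigr),
\]
and routine trigonometric/hyperbolic identities give $|\trh|^2 = \cosh^2 r - \sin^2\omega$, $|\trh'|^2 = \sinh^2 r + \sin^2\omega$, and
\[
\trh'\overbar{\trh} = \tfrac{1}{2}\bigl(\sinh 2r + i\sin 2\omega\bigr),
\]
which matches $g(r,\omega;v,w)$. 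The only real obstacle is careful bookkeeping—verifying the harmonic cancellation $\psi_{rr}+\psi_{\omega\omega}=0$ and computing $\trh'\overbar{\trh}$ cleanly—but no new analytic ideas are needed beyond what is already set up in \S 4 of \cite{Qi-Spectral-LS}.
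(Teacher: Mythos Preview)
Your proof is correct and follows exactly the paper's approach: integrate by parts in \eqref{4eq: integral H(z)} to transfer $\partial_r^2+\partial_\omega^2$ from $f(r,\omega)=-(\partial_r^2+\partial_\omega^2)(k(r)\theta(\omega))$ onto the cosine, and then verify the eigen-identity $(\partial_r^2+\partial_\omega^2)\Phi=-g\,\Phi$. You simply spell out the computation (harmonicity of $\trh,\trh'$ and the evaluations of $|\trh|^2$, $|\trh'|^2$, $\trh'\overbar{\trh}$) that the paper leaves implicit; note, incidentally, that your expansion gives the cross term $-\Re((\sinh 2r+i\sin 2\omega)v\overbar{w})$ rather than the $-\tfrac12\Re(\cdots)$ printed in \eqref{4eq: f (w; r w)}, which appears to be a typo in the paper.
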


Later, it will be more convenient to consider the variant Bessel integral
\begin{align}\label{4eq: I(v;w), 1}
	\SDI (v, w) = \viint_{ {\SHA}  } 
	\exp (i \Re (v   ( \trh  (r, \omega) -1))) \cos  ( \Re   ( w  \trh'  (r, \omega)   )   ) f  (r, \omega)   \nd \widehat{\mu} (r,  \omega ),
\end{align}
or, in the second form, 
\begin{equation}\label{4eq: I(v;w), 2}
   \begin{split} 
   	\viint_{ {\SHA}  }   \hskip -1pt \exp (i \Re (v   ( \trh  (r, \omega) \hskip -1pt    -    \hskip -1pt    1)  \hskip -0.5pt  )  \hskip -0.5pt    ) \cos  ( \Re   ( w  \trh'  (r, \omega) \hskip -0.5pt   ) \hskip -0.5pt   )    g ( r, \omega; v      ,  w      ) k  (  r) \theta  (  \omega)    \nd \widehat{\mu} (r,  \omega ).
   \end{split}
\end{equation}

In practice, for $m, n, c \in \CaloO \smallsetminus \{0\}$,   
\begin{align}
	z = \frac {2\pi \sqrt{m n}} {c}, \qquad u = \sqrt{\frac {m} {n} },   
\end{align}
as in \eqref{6eq: P(a)}, and hence
\begin{align}\label{4eq: v, w = m+-n}
	v    = 2\pi \frac {m + n} {c}, \qquad w       = 2\pi \frac {m - n} {c} ,
\end{align}
as in \eqref{6eq: P(a), 2}. 

A  pleasant  feature is that $m$ and $n$ become separate inside the integrals. 
The first formula \eqref{4eq: integral H(z)} or its variant \eqref{4eq: I(v;w), 1} is of a simpler form and will be used for the analysis in \S \ref{sec: further analysis}. %more often in our later analysis. % for the range $|c| < \sqrt{N}$. 
The second formula \eqref{4eq: integral H(z), 2} or \eqref{4eq: I(v;w), 2}, in particular,  the factors $|v     |^2$, $|w     |^2$, $v \overbar{w}$ as in \eqref{4eq: f (w; r w)},    
will be used mainly  to ensure the decay  of the $c$-sum in Kuznetsov.  %The bound \eqref{4eq: bound for H(z;u)} in Lemma \ref{lem: integral repn, 2} will be used to shrink the range of $m$ and $n$ so as to get stronger error estimate. % and also the dual $q$-sum in Poisson. 
%Albeit not in the integral form,  the formula used by W. Z. Luo \cite{Luo-Twisted-LS}  may be considered as the real variant of Lemma  \ref{lem: integral repn, 2}. % \red{?????????}

% in the case when $c$ is large, in particular, the $|v_{\pm}|^2$ in \eqref{4eq: f (w; r w)} will ensure the convergence of the $c$-sum in Kuznetsov. 
%By comparing with (31) in \cite{Luo-Twisted-LS},  

\begin{proof}[Proof of Lemma \ref{lem: integral repn}] 
	From    \cite[(4.7)]{Qi-Spectral-LS},  we invoke the integral representation:
	\begin{equation}\label{10eq: Bessel}
		\boldJ_{i \vkappa ,   p}  (  x     e^{ i \phi} )   = 4 \pi i^{2p}  \int_{-\infty}^\infty   \widebar{\vchi}_{2p}  (\cosh (r + i\phi) )   J_{2p} \hskip -1pt \left( 2  {x} |\cosh (r + i\phi)|  \right) \exp ({2 i r \vkappa})  \nd r ,
	\end{equation}   
	where $\vchi_{2p} (z) = (z/|z|)^{2p}$ as in \S \ref{sec: notation}. 
	For  $|r| > 1$,  we have the uniform bound 
	 \begin{align*}
	 	J_{2p} \hskip -1pt \left( 2   {x} |\cosh (r + i\phi)| \right) \Lt \frac {\sqrt{1+p^2}} {\sqrt{x \cosh r}  } . 
	 \end{align*}
	 Thus the triple integral obtained by inserting \eqref{10eq: Bessel} into \eqref{5eq: Bessel H(z)} is absolutely convergent, and  hence it is permissible  to change the order of integration.

	 Now let the $r$-integral as in \eqref{10eq: Bessel} be the outermost, and apply the following variant of the Bessel formula  (as in \cite[(4.8)]{Qi-Spectral-LS}):
	 \begin{align}\label{10eq: Bessel, 2}
	 	\widebar{\vchi}_{2p} (a) J_{2p} (  |a|) = \frac 1 {2 \pi i^{2p}	} \int_0^{2\pi} \exp ( {2 i p \omega + i   \mathrm{Re} ( a e^{i \omega}  )} ) \nd \omega ,
	 \end{align}
  with  $ a = 2 x \cosh (r + i\phi)$ so that $ \mathrm{Re} ( a e^{i \omega}  ) = 2 x \shskip \trh (r, \omega; \phi)$, where
	 \begin{align*}%\label{5eq: trh function}
	 	\trh  (r, \omega; \phi) =    \cosh r \cos \omega \cos \phi - \sinh r \sin \omega \sin \phi .
	 \end{align*}     It follows that the Bessel integral $\SDH	(  x     e^{ i \phi}; y e^{i \theta} )$ is equal to
	 \begin{align*}%\label{10eq: quadra-integral}
	 	4  &  \viint_{ {\SHA}  }  \cos ( 2 x \trh (r, \omega; \phi)) \nd \widehat{\mu} (r,  \omega )\\
	 	\cdot &  \viint_{\BSA    }     \cos  ( { 2   \vkappa (r+\log  {y}) +  2   p (\omega +\theta )}  ) k (\vkappa, p)     (\vkappa^2 + p^2 )  \nd  \mu (\vkappa, p)     , 
	 \end{align*}
	 in the notation of \S \ref{sec: notation} ($\BSA = \BR \times \BZ$ and $\HA = \BR \times \BR /\pi \BZ$). %; note here that $k (\vkappa, p)$ and $\trh  (r, \omega; \phi)$ are even.  

	 Next, we make the change of variables $   r \ra  r - \log y   $ and $  \omega   \ra   \omega - \theta$. By a simple calculation, we find that $2 x \,	\trh  (r - \log  {y}, \omega - \theta  ; \phi)$ is equal to 
	 \begin{align*}  
	 	xy \cdot (\cosh r - \sinh r)  	 \cos (\omega - \theta - \phi)   + 	x/y \cdot (\cosh r + \sinh r)  	 \cos (\omega - \theta + \phi).  
	 \end{align*}
	 Further, for $z = x e^{i\phi}$ and $u = y e^{i\theta}$,  this may be reformulated into
	 \begin{align*}%\label{4eq: w trh}
	 	\Re  ( v       \trh  (r, \omega) - w       \trh'  (r, \omega)  ),
	 \end{align*}
	as defined by \eqref{4eq: w+-} and \eqref{4eq: trh+-}. 
	Moreover, after the change of variables, the integral 
	 \begin{align*}
	 4	\viint_{\BSA    }     \cos  (   2  \vkappa r +  2   p \omega    ) k (\vkappa, p)     (\vkappa^2 + p^2 )  \nd  \mu (\vkappa, p) 
	 \end{align*} may be easily evaluated by  
	 \begin{align*}
	 	\int  \cos (2 r \vkappa    ) \exp \big(    - (\vkappa /K)^2 \big)  \nd \vkappa = \sqrt{\pi} K \exp \big( \hskip -1pt    -  (K { r }  ) ^2   \big) ,
	 \end{align*}
	 and (by Poisson)
	 \begin{align*}
	 	\sum  \cos (2 \omega p ) \exp \big(        -  (  p / P  )^2 \big) = \sqrt{\pi} P \sum  \exp \big( \hskip -1pt   - (P (\omega + \pi p) )^2 \big) ,
	 \end{align*}
	 yielding $f (r,  \omega)$ as given by \eqref{4eq: f (r, w)} and \eqref{4eq: defn of h, theta}.  
\end{proof}

%In view of \eqref{4eq: defn of g}, b
\begin{proof}[Proof of Lemma \ref{lem: integral repn, 2}] 
In view of \eqref{4eq: integral H(z)} and \eqref{4eq: f (r, w)},  the formula in \eqref{4eq: integral H(z), 2} follows from  partial integration, along with the identity
\begin{equation*}%\label{4eq: derivative trh}
	\begin{split}
		    \big(   (\partial / \partial r)^2     +     (\partial / \partial \omega)^2   \big)  	& \cos  ( \Re   ( v       \trh  (r, \omega) -  w       \trh'  (r, \omega)   )   )  \\
		& =   -  g ( r, \omega; v      ,  w      ) \cdot  \cos   ( \Re     ( v       \trh  (r, \omega) -  w       \trh'  (r, \omega)   )   ) .
	\end{split}
\end{equation*}   
\end{proof}

%Finally, we record here some simple facts that will be helpful:
%\begin{align*}
%	|\trh (r, \omega)| = \sqrt{\sinh^2 r + \cos^2 \omega}, \qquad \theta (\omega) = O \big( P \exp \big(\hskip -1pt   - (P\omega)^2 \big)\big), 
%\end{align*}
%if $|\omega| \leqslant \pi/2$. 

%Finally, it is helpful to note
%\begin{align}\label{4eq: bounds of g}
%	g (r, \omega) \Lt KP \big(K^2+P^2 \big) \exp \big( \hskip -1pt - (Kr)^2 - (P\omega)^2 \big), \quad , \quad h(r) \theta(\omega) \Lt KP \exp \big( \hskip -1pt - (Kr)^2 - (P\omega)^2 \big), 
%\end{align}
%provided $|\omega| \leqslant \pi/2$. 

\section{Further Analysis for the Bessel Integral}\label{sec: further analysis}

This section is for the asymptotic analysis of the variant Bessel integral   $	\SDI (v, w)$  as in \eqref{4eq: I(v;w), 1}, and may be considered as the prelude to \S \ref{sec: reductions}. However, in practice, the $\boldsymbol{a}$-average will be inside the integrals, and one has to first apply bounds for the quadratic forms of $\boldsymbol{a} $, %prior to   error estimates, 
so the results here are not intended for direct applications.  

Let $K = P = T$ throughout this section.

We start with the formula in  \eqref{4eq: I(v;w), 1} and simplify it in {\it four} steps:

%\vskip 5pt 

  Step 1: Truncate smoothly the double integral in \eqref{4eq: I(v;w), 1} at $|r|, |\omega | \asymp   T^{\vepsilon} / T$, so that the   integral $	\SDI (v, w)$ is turned into 
	\begin{align*}
		  \int_{-T^{\vepsilon}/T}^{T^{\vepsilon}/T}\int_{-T^{\vepsilon}/T}^{T^{\vepsilon}/T}
		\exp (i \Re (v   ( \trh  (r, \omega) -1))) \cos  ( \Re   ( w  \trh'  (r, \omega)   )   ) f (r, \omega) \tau (r, \omega)  \nd r \nd \omega ,  
	\end{align*}
up to an error term $O \big( T^2  \exp (-T^{\vepsilon})  \big)$, where $\tau (r, \omega)$ is a suitable cut-off function of the indicated square support and bounds $$  % \partial_r^{\valpha} \partial_\omega^{\beta} 
\frac {\partial^{\valpha+\beta} \tau (r, \omega) } {\partial r^{\valpha} \partial \omega^{\beta} }  \Lt_{\valpha, \beta}    T^{\gamma-\vepsilon \gamma} , \qquad \gamma = \valpha + \beta   .$$

  Step 2: In the above integral, approximate the weight $f (r, \omega)$ (see \eqref{4eq: f (r, w)} and \eqref{4eq: defn of h, theta}) by
\begin{align}\label{5eq: f (r,w), 2}
	f_{\natural} (r, \omega) =  %2 \pi KP \big( K^2 + P^2 - 2(K^2r)^2   - 2(P^2\omega)^2   \big) \exp  \big( \hskip -1pt    - (Kr)^2 - (P\omega)^2 \big) .  
	- k ''(r) k  (\omega) - k  (r) k '' (\omega) ,
\end{align}
where    \begin{align}\label{5eq: h(r), h(w)}
	k (r) = \sqrt{\pi} T \exp \left(- (T r  )^2 \right), \qquad  k (\omega) = \sqrt{\pi} T \exp \left(- (T\omega )^2 \right); 
\end{align}  the error term is dominated by that in Step 1.

\begin{lem}
	Define 
	\begin{align}\label{5eq: I(v, w), 2}
	 \SDI_{\natural} (v, w)  \hskip -1pt  = \hskip -2pt  \viint  \hskip -1pt
	\exp (i \Re (v   ( \trh  (r, \omega) -1))) \cos  ( \Re   ( w  \trh'  (r, \omega)   )   ) f_{\natural} (r, \omega) \tau (r, \omega)  \nd r \nd \omega . 
	\end{align}
We have the bound 
\begin{align}\label{4eq: bound for H(z;u)}
	\SDI_{\natural} (v, w)  \Lt_{\gamma} T^2   \bigg( \frac { T  } {  |w|  } %+  \frac { |v|^2 } {  |w|^{4} } 
	+   \frac 1 {T}    \frac {|v|^2} {|w|^3} \bigg)^{2\gamma} 
\end{align}
for any integer $\gamma \geqslant 0$.  
\end{lem}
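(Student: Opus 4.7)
The plan is to exploit the holomorphic structure of the phase by introducing the complex coordinate $\zeta = r + i\omega$, so that $\trh(r,\omega) = \cosh\zeta$ and $\trh'(r,\omega) = \sinh\zeta$. Writing $\cos(\Re(w\sinh\zeta)) = \tfrac12(e^{i\Re(w\sinh\zeta)} + e^{-i\Re(w\sinh\zeta)})$ splits $\SDI_\natural(v,w) = \tfrac12(I_+ + I_-)$, where
\[
I_\pm = \iint e^{i\Re F_\pm(\zeta)}\, f_\natural(r,\omega)\tau(r,\omega)\,\nd r\,\nd\omega,\qquad F_\pm(\zeta) := v(\cosh\zeta - 1) \pm w\sinh\zeta.
\]
Since each $F_\pm$ is entire in $\zeta$, the Wirtinger operator $\partial_\zeta = \tfrac{1}{2}(\partial_r - i\partial_\omega)$ satisfies $\partial_\zeta e^{i\Re F_\pm} = \tfrac{i}{2}F'_\pm\,e^{i\Re F_\pm}$, where $F'_\pm(\zeta) = v\sinh\zeta \pm w\cosh\zeta$. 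This holomorphic identity is the basis for integration by parts.

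Define $D_\pm a := \partial_\zeta(a/F'_\pm)$. The adjoint of $\partial_\zeta$ with respect to $\nd r\,\nd\omega$ is $-\partial_\zeta$, and boundary contributions vanish by the Gaussian decay of $f_\natural$ together with the compact support of $\tau$. Iterating integration by parts $2\gamma$ times gives
\[
I_\pm = (-2/i)^{2\gamma}\iint e^{i\Re F_\pm}\, D_\pm^{2\gamma}[f_\natural\tau]\,\nd r\,\nd\omega.
\]
On the support of $\tau$ one has $|F'_\pm(\zeta)| = |\pm w + v\zeta + O(|w|/T^{2})| \asymp |w|$ and $|F''_\pm(\zeta)| = |v\cosh\zeta \pm w\sinh\zeta| \Lt |v| + |w|/T$, and the Gaussian amplitude satisfies $|\partial_\zeta^j(f_\natural\tau)| \Lt T^{4+j}$. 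Since $D_\pm a = \partial_\zeta a/F'_\pm - a F''_\pm/(F'_\pm)^2$, a single application of $D_\pm$ multiplies the pointwise size of the amplitude by a factor $\Lt T/|w| + |v|/|w|^2$. An inductive computation then yields
\[
\bigl|D_\pm^{2\gamma}[f_\natural\tau]\bigr| \Lt_{\gamma} T^4\bigl(T/|w| + |v|/|w|^2\bigr)^{2\gamma},
\]
and multiplying by the area of the support ($\Lt T^{-2+\vepsilon}$) gives $|I_\pm| \Lt_{\gamma} T^2 (T/|w| + |v|/|w|^2)^{2\gamma}$.

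A short case analysis converts this estimate into the stated bound. When $|v| \leq T|w|$, we have $|v|/|w|^2 \leq T/|w|$, so $T/|w| + |v|/|w|^2 \leq 2T/|w|$. When $|v| > T|w|$, we have $|v|/|w|^2 \leq |v|^2/(T|w|^3)$, so $T/|w| + |v|/|w|^2 \leq T/|w| + |v|^2/(T|w|^3)$. In either case $T/|w| + |v|/|w|^2 \Lt T/|w| + |v|^2/(T|w|^3)$, and the asserted bound follows. The main obstacle is the regime $|v| \gtrsim T^{1-\vepsilon}|w|$, where $F'_\pm$ may have near-zeros inside the support of $\tau$ and the operator $D_\pm$ becomes singular. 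This is addressed either by complementing the IBP with a local analysis around the near-stationary point of $F_\pm$, or by observing that the geometric factor $T/|w| + |v|^2/(T|w|^3)$ already exceeds a fixed positive constant throughout this problematic regime, so that the claim reduces to the trivial estimate $\SDI_\natural \Lt T^2$ inherited from $\|f_\natural\|_\infty$ times the measure of the support.
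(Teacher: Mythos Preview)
Your approach via the Wirtinger derivative $\partial_\zeta$ applied separately to each $I_\pm$ is natural and genuinely different from the paper's, which integrates by parts using the \emph{real} Laplacian $\partial_r^2+\partial_\omega^2$ together with the weight $1/g(r,\omega;v,w)$ from \eqref{4eq: f (w; r w)}. There is, however, a real gap in your argument.

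Your option (b) for the near-stationary regime is not valid in general. You claim that whenever $|v|\gtrsim T^{1-\vepsilon}|w|$ the quantity $T/|w|+|v|^2/(T|w|^3)$ is bounded below by a fixed positive constant, so that the trivial estimate $\SDI_\natural\Lt T^2$ suffices. But this lower bound only holds when $|w|\Lt T$. If, say, $|w|\asymp T^A$ with $A>1$ and $|v|\asymp T|w|$ (so that $F'_\pm$ vanishes at some $\zeta_0$ with $|\zeta_0|\asymp 1/T$, well inside the support of $\tau$), then both $T/|w|$ and $|v|^2/(T|w|^3)$ are $\asymp T^{1-A}\ll 1$, and for any $\gamma\geqslant 1$ the right-hand side of \eqref{4eq: bound for H(z;u)} is $\ll T^2$. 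The trivial bound cannot reach this, and your integration by parts is unavailable precisely here because $F'_\pm$ has a zero on the support. Option (a) (``local analysis near the stationary point'') is asserted but not carried out, and it is not clear that stationary phase alone would recover the full strength of \eqref{4eq: bound for H(z;u)} in this range.

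The paper's second-order scheme is designed to avoid exactly this degeneration. The weight $g(r,\omega;v,w)$ in \eqref{4eq: f (w; r w)} is uniformly bounded below by a fixed multiple of $|w|^2$ on the support of $\tau$: the $(\cosh^2 r-\sin^2\omega)|w|^2$ term alone already contributes $\gtrsim|w|^2$, and the cross term $-\tfrac12\Re\bigl((\sinh 2r+i\sin 2\omega)v\bar w\bigr)$ is controlled by the other two via the identity $|\sinh 2r+i\sin 2\omega|^2=4(\sinh^2 r+\sin^2\omega)(\cosh^2 r-\sin^2\omega)$ and AM--GM. Consequently the dual operator $\mathrm{D}^*$ never becomes singular, and the iteration goes through for all $(v,w)$ without any case distinction. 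This uniform non-vanishing of $g$ is the key feature that your first-order factor $F'_\pm=v\sinh\zeta\pm w\cosh\zeta$ lacks.
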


\begin{proof}
%The bound in  \eqref{4eq: bound for H(z;u)} follows from repeated  partial integration with the aid of the variant of the identity \eqref{4eq: derivative trh}.  More explicitly, w
Recall the definition of $g ( r, \omega; v      ,  w      ) $ as in \eqref{4eq: f (w; r w)}. We introduce the differential operators 
	\begin{align*}
		\mathrm{D} = - \frac {1} { g ( r, \omega; v      ,  w      ) } \bigg( \frac {\partial^2} {\partial r^2} + \frac {\partial^2} {\partial \omega^2} \bigg), \qquad \mathrm{D}^* =  - \bigg( \frac {\partial^2} {\partial r^2} + \frac {\partial^2} {\partial \omega^2} \bigg) \frac {1} { g ( r, \omega; v      ,  w      ) };
	\end{align*}
	$\mathrm{D}^{*}$ is the dual of $\mathrm{D}$. Note that $\exp (i \Re (v   ( \trh  (r, \omega) -1))) \cos  ( \Re   ( w  \trh'  (r, \omega)   )   )$ is invariant under $\mathrm{D}$ (see the proof of Lemma \ref{lem: integral repn, 2}), so   $\mathrm{D}^*$ may be executed repeatedly.  By induction we may prove that $\mathrm{D}^{*  \gamma}  (f_{\natural} (r, \omega) \tau (r, \omega)  ) $  is a linear combination of quotients of the form
	\begin{align*}
		\frac { \partial^{\valpha}   (f_{\natural} (r, \omega) \tau (r, \omega)  )  \prod    \lp \partial^{\beta} g ( r, \omega; v      ,  w      ) \rp^{k_{\beta}}  } {g( r, \omega; v      ,  w      )^{\gamma+k} } , \qquad {\sum} k_{\beta} = k , \quad  \valpha + \sum  k_{\beta} \beta  = 2 \gamma,  
	\end{align*}
	where $  \partial^{\valpha}$ is the shorthand for any partial derivative of degree $ \valpha $. It is then an easy excise to derive  \eqref{4eq: bound for H(z;u)}. 
	Note that $g ( r, \omega; v      ,  w      )  $ is dominated by $|w|^2$ due to the $\cosh^2 r$ and that   its {\it first} derivative   $ \partial  g ( r, \omega; v      ,  w      )  $  contains  either $  \sinh  2r \cdot |v|^2  $ or $\sin  2 \omega \cdot |v|^2 $   (hence the factor $ 1/T$).   
\end{proof}

  Step 3: 
In view of (see  \eqref{4eq: trh+-}) 
\begin{align*}%\label{5eq: approx trh}
	   \trh  (r, \omega) = 1  + O \big( r^2 + \omega^2 \big),  \qquad    \trh'  (r, \omega) = r + i \omega + O \big( r^3 + \omega^3 \big), 
\end{align*}
further replace   $ \exp (i \Re (v   ( \trh  (r, \omega) -1))) \cos  ( \Re   ( w  \trh'  (r, \omega)   )   ) $ by    
\begin{align}\label{5eq: approx}
	  \cos   (   \Re   (   w       (r + i \omega)    )   )   + O  \big(  (|v      | + | w      |  ) \big(r^2 + \omega^2 \big) \big), 
\end{align}
so that the integral $\SDI_{\natural} (v, w)$ is  simplified into 
\begin{align*} 	
	\viint	\cos    (\Re (w       (r + i \omega) ))        f_{\natural} (r, \omega) \tau (r, \omega)   \nd r \nd \omega ,  
\end{align*}
up an error term $O  (|v      | + | w      |  )   $.

  Step 4: Extend the integral domain onto $\BR^2$, with the cut-off function $\tau (r, \omega) $ removed and an error term added as in Step 1, and conclude by a simple evaluation of the double Fourier integral:
\begin{align*} %\label{5eq: integral eval}
	 	\viint 		\cos (\Re (w       (r + i \omega) ))     f_{\natural} (r, \omega)    \nd r \nd \omega =    |  \pi w      |^2 \exp     \lp     -  (   { |w|} / {2 T}  )^2    \rp .   
\end{align*}  

Note that only Step 3 contributes an essential error term $O  (|v      | + | w      |  )   $. 

\begin{lem}\label{lem: integral repn, asymp}
	%	For $|z| > 1$, w
	%	Let notation be as above. 
	We have the asymptotic formula %{\rm:} 
	\begin{equation}\label{5eq: integral H(z), asym}
		\begin{split}
			\SDI (v, w) =    |  \pi w      |^2 h (w/2)  + O  \big(     |v      | + | w      |   +  T^2    \exp (-T^{\vepsilon})   \big), 
		\end{split} 
	\end{equation}
where %as in {\rm\eqref{1eq: weight k}}   we define
\begin{align} \label{5eq: h(w)}
	h (z) = \exp \left(  -   \frac {|z|^2 } {T^2}   \right) . 
\end{align} 
\end{lem}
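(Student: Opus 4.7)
The proof plan is to follow the four-step reduction sketched just before the statement, which successively simplifies $\SDI(v,w)$ from its definition in \eqref{4eq: I(v;w), 1} down to a two-dimensional Gaussian Fourier integral over $\BR^2$ whose closed form equals $|\pi w|^2 h(w/2)$. The entire task is to book-keep the errors at each step.

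First I would insert a smooth cut-off $\tau(r,\omega)$ supported in the square $|r|,|\omega| \Lt T^{\vepsilon}/T$, with derivative bounds $\partial^{\valpha+\beta}\tau \Lt_{\valpha,\beta} T^{(\valpha+\beta)(1-\vepsilon)}$. The tail contribution to $\SDI(v,w)$ is estimated using the variant expression \eqref{4eq: I(v;w), 2} obtained from Lemma \ref{lem: integral repn, 2}, together with the rapid decay of $k(r)$ and $\theta(\omega)$ outside their Gaussian peaks, giving an error of $O(T^2 \exp(-T^{\vepsilon}))$. On the truncated domain, the non-principal Gaussians in $\theta(\omega)$---those centered at $\omega = \pm\pi, \pm 2\pi, \ldots$---are exponentially small in $T$, so I may freely replace $f(r,\omega)$ by $f_{\natural}(r,\omega)$ as in \eqref{5eq: f (r,w), 2}, with the same exponential error. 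This reduces matters to studying $\SDI_{\natural}(v,w)$ as in \eqref{5eq: I(v, w), 2}.

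Next I would Taylor-expand $\trh(r,\omega) - 1 = O(r^2+\omega^2)$ and $\trh'(r,\omega) = r + i\omega + O(r^3+\omega^3)$, producing
\[
\exp(i\Re(v(\trh-1)))\cos(\Re(w\trh')) = \cos(\Re(w(r+i\omega))) + O((|v|+|w|)(r^2+\omega^2)).
\]
After rescaling $r \to r/T$, $\omega \to \omega/T$, the weight $f_{\natural}$ becomes $O(T^2)$ on a unit square, the Taylor remainder contributes an extra $T^{-2}(r^2+\omega^2)$, and the total contribution from this step is $O(|v|+|w|)$. Finally I would remove the cut-off at the cost of another $O(T^2 \exp(-T^{\vepsilon}))$ and evaluate
\[
\viint_{\BR^2} \cos(\Re(w(r+i\omega))) f_{\natural}(r,\omega) \nd r \nd \omega = |\pi w|^2 h(w/2),
\]
using the product structure $f_{\natural} = -k''(r)k(\omega) - k(r)k''(\omega)$ and the self-dual Gaussian Fourier pair $\int k(r) \cos(\xi r) \nd r = \pi \exp(-\xi^2/(4T^2))$, noting that both sine cross-terms vanish by parity.

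The main obstacle is the error bookkeeping in Step 3, the only step producing an error polynomial---rather than exponential---in $T^{-\vepsilon}$. The saving mechanism there, whereby each extra factor of $r$ or $\omega$ in the Taylor remainder yields $1/T$ against the Gaussian width of $f_{\natural}$, needs to be checked carefully against the two derivatives carried by $k''$. A secondary point is confirming uniformity in $v$ and $w$: when $|w| \gg T^{1+\vepsilon}$ the stated main term $|\pi w|^2 h(w/2)$ is already exponentially small and so is $\SDI(v,w)$ itself (via the bound \eqref{4eq: bound for H(z;u)}), while the regime $|w| \Lt T$ is where the Gaussian self-duality and the explicit $\sqrt{\pi}$ constants in $k(r), k(\omega)$ must line up to deliver exactly $|\pi w|^2 h(w/2)$ rather than a constant multiple thereof.
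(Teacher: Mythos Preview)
Your proposal is correct and follows essentially the same four-step reduction that the paper itself carries out in \S\ref{sec: further analysis}; the truncation, replacement of $\theta$ by a single Gaussian, Taylor approximation of $\trh,\trh'$, and Gaussian Fourier evaluation are all as in the paper, with the same error bookkeeping. The only minor imprecision is in Step~3: after rescaling the pointwise size of $f_{\natural}$ is $O(T^4)$, not $O(T^2)$, but once the Jacobian $T^{-2}$ and the Gaussian decay are included your $O(|v|+|w|)$ conclusion is correct (indeed $\iint (r^2+\omega^2)|f_{\natural}(r,\omega)|\,dr\,d\omega = O(1)$, which is precisely the saving mechanism you flag).
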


%The analysis below is effective only if $ K, P \Gt  1$ are large. 

\section{Quadratic Forms and Large Sieve} \label{sec: quad} 

In this section,  we recollect simple bounds for quadratic forms with Kloosterman sums and establish   hybrid variants of the large sieve inequality.  Subsequently, $\varPi, \varLambda, M, N, C $ will always denote  positive parameters.

\subsection{Bounds for Quadratic Forms} 

%Let $S   (m, n ; c )$ and $ V_{q} (m, n; c )$ be defined as in \eqref{1eq: defn Kloosterman} and \eqref{1eq: defn V}.  

% The next two  lemmas  will be used repeatedly in the next section.

\begin{lem}\label{lem: quad form}
	
	Let $\boldsymbol{b} : \CaloO     \ra \BC$ and $ c \in \CaloO \smallsetminus \{0\}$.  
	We have 
	\begin{equation} 
		\label{6eq: Kloosterman, 1.1}     
		\mathop{{\mathop{\sum \sum}_{ {M} < |m|  \leqslant    {M} +  {\varPi} } }}_{ \,  {N} <   |n|  \leqslant    { N} +  {\varLambda} }   \left| b_{m} \overline{b }_{n}   S (m, n; c) \right|    \Lt   \tau^2 (c) |c |  \cdot \varPi  \|\boldsymbol{b}_{M, \varPi} \|_2^{} \cdot   \varLambda    \|\boldsymbol{b}_{N, \varLambda} \|_2^{}  ,  
	\end{equation} 
	\begin{align} 
	\label{6eq: Kloosterman, 2.1}       \mathop{{\mathop{\sum \sum}_{ {M} < |m|  \leqslant    {M} +  {\varPi} } }}_{ \,  {N} <   |n|  \leqslant    { N} +  {\varLambda} } b_{m} \overline{b }_{n}   S (m, n; c)   \Lt   \sqrt{|c|^2 + \varPi^2}  \|\boldsymbol{b}_{M, \varPi} \|_2 \cdot \sqrt{|c|^2 + \varLambda^2}     \|\boldsymbol{b}_{N, \varLambda} \|_2  ,  
\end{align}   
	and in particular 
	\begin{align}
		\label{6eq: Kloosterman, 1}  	   {\mathop{\sum \sum}_{ {N} < |m| , |n|  \leqslant    {2N} } } \left| b_{m} \overline{b }_{n}   S (m, n; c) \right|   \Lt \tau^2 (c) |c| N^2  \|\boldsymbol{b}_N \|_2^2, 
	\end{align} 
\begin{align} 
	\label{6eq: Kloosterman, 2}   \  {\mathop{\sum \sum}_{ {N} < |m| , |n|  \leqslant    {2N} } } b_{m} \overline{b }_{n}   S (m, n; c)   \Lt  \big(|c|^2 + N^2    \big) \|\boldsymbol{b}_N \|_2^2 ,
\end{align}  
where
\begin{align*}
	\|\boldsymbol{b}_{N, \varLambda} \|_2 = \bigg(  \sum_{  {N} <  |n|  \leqslant    {N} +  {\varLambda} } |b_{n}|^2 \bigg)^{1/2}, \qquad  
		\|\boldsymbol{b}_{N} \|_2^{} = \bigg(  \sum_{  {N} <  |n|  \leqslant    {2N}  } |b_{n}|^2 \bigg)^{1/2} .
\end{align*}
\end{lem}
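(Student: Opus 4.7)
The bounds \eqref{6eq: Kloosterman, 1} and \eqref{6eq: Kloosterman, 2} are the specializations of \eqref{6eq: Kloosterman, 1.1} and \eqref{6eq: Kloosterman, 2.1} respectively, obtained by setting $M = N$ and $\varPi = \varLambda = N$ (and absorbing the resulting $\tau^2(c)|c|$ into $|c|^2$ via the bound $\tau^2(c)|c| \ll |c|^2$ in the first case, with $\varPi \varLambda = N^2$). Thus the plan is to prove the two general bounds.

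For \eqref{6eq: Kloosterman, 1.1}, I would appeal to the Weil bound for Gaussian Kloosterman sums,
\begin{align*}
|S(m, n; c)| \Lt \tau(c) |c| (m, n, c)^{1/2},
\end{align*}
and split the double sum according to the gcd ideal $d = (m, n, c)$. For each divisor $d \mid c$, the number of Gaussian integers $m$ with $M < |m| \leqslant M + \varPi$ and $d \mid m$ is $O(\varPi^2 /|d|^2 + 1)$, so Cauchy--Schwarz gives
\begin{align*}
\sum_{\substack{M < |m| \leqslant M+\varPi \\ d \mid m}} |b_m| \Lt (\varPi/|d| + 1) \shskip \|\boldsymbol{b}_{M,\varPi}\|_2,
\end{align*}
and similarly for $n$. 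Summing the bound $\tau(c)|c| |d|^{1/2}$ times this product over $d \mid c$ yields one factor of $\tau(c)$ from the divisor count, giving the $\tau^2(c) |c| \cdot \varPi \varLambda \shskip \|\boldsymbol{b}_{M,\varPi}\|_2 \|\boldsymbol{b}_{N,\varLambda}\|_2$ claimed.

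For \eqref{6eq: Kloosterman, 2.1}, the idea is to exploit cancellation by opening the Kloosterman sum and reducing to a single-modulus large sieve over Gaussian integers. From the definition \eqref{1eq: defn Kloosterman},
\begin{align*}
\mathop{\sum\sum}_{m, n} b_m \overline{b_n} S(m, n; c) = \sumx_{\valpha \, (\mathrm{mod}\, c)} \bigg( \sum_m b_m e\bigg[\frac{\valpha m}{c}\bigg] \bigg) \overline{\bigg( \sum_n b_n e\bigg[\frac{-\widebar{\valpha} n}{c}\bigg] \bigg)}.
\end{align*}
Cauchy--Schwarz in $\valpha$ reduces the matter to the bound
\begin{align*}
\sumx_{\valpha \, (\mathrm{mod}\, c)} \bigg|\sum_{M < |m| \leqslant M+\varPi} b_m e\big[\valpha m / c\big]\bigg|^2 \Lt (|c|^2 + \varPi^2) \shskip \|\boldsymbol{b}_{M,\varPi}\|_2^2,
\end{align*}
which is the Gaussian-integer analog of the standard large sieve with a single denominator; it is essentially Parseval for characters modulo $c$ followed by Cauchy--Schwarz within residue classes (the number of $m$ in the annulus lying in a given class mod $c$ is $\Lt \varPi^2/|c|^2 + 1$). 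The same bound applies to the $n$-sum because $\valpha \mapsto -\widebar{\valpha}$ permutes the reduced residues modulo $c$, and the product of the two yields \eqref{6eq: Kloosterman, 2.1}.

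The main technical point to handle carefully is the counting of Gaussian integers of bounded norm in an arithmetic progression modulo $c$ when the annulus width $\varPi$ is smaller than $|c|$, but this is a routine lattice-point estimate. Once this is cleanly in place, both \eqref{6eq: Kloosterman, 1.1} and \eqref{6eq: Kloosterman, 2.1} follow from their respective real-variable prototypes without new ideas.
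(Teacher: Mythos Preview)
Your strategy matches the paper's exactly: Weil bound plus Cauchy--Schwarz for \eqref{6eq: Kloosterman, 1.1}, and opening the Kloosterman sum, Cauchy--Schwarz in $\valpha$, and the single-modulus mean-value (orthogonality) inequality for \eqref{6eq: Kloosterman, 2.1}. Two technical slips need fixing, however. First, over $\BZ[i]$ the Weil bound reads $|S(m,n;c)| \Lt \tau(c)\,|(m,n,c)|\,|c|$ (square root of the \emph{norm} of the gcd and of the modulus), which is precisely how the paper states it; your exponent $1/2$ is the rational-integer normalization and is too strong here. Second, and more consequentially, your lattice-point count $O(\varPi^{2}/|d|^{2}+1)$ for multiples of $d$ in the annulus $M<|m|\leqslant M+\varPi$ is only valid when $\varPi \gtrsim M$: the annulus has area $\asymp (M+\varPi)\varPi$, not $\varPi^{2}$, so the honest count is $O((M+\varPi)\varPi/|d|^{2}+1)$. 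With this correction Cauchy--Schwarz yields a factor $\sqrt{(M+\varPi)\varPi}$ in place of your $\varPi$ (and similarly for $\varLambda$), so your argument as written does not reach \eqref{6eq: Kloosterman, 1.1} in the thin-annulus regime $\varPi \ll M$; the same issue arises in your derivation of the mean-value bound for \eqref{6eq: Kloosterman, 2.1}.

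Finally, the specialization to \eqref{6eq: Kloosterman, 1} is immediate with $M=N$ and $\varPi=\varLambda=N$ and needs no absorption: the factor $\tau^{2}(c)|c|$ is already present on the right of \eqref{6eq: Kloosterman, 1}.
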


\begin{proof}
	The   inequalities \eqref{6eq: Kloosterman, 1.1} and \eqref{6eq: Kloosterman, 1} follow  from Cauchy--Schwarz and the Weil  bound:
	\begin{align*}
		%	\label{1eq: Weil}
		S (m, n; c)   \Lt \tau (c) {|(m, n, c)|} {|c|}. 
	\end{align*} 
	The inequalities \eqref{6eq: Kloosterman, 2.1} and \eqref{6eq: Kloosterman, 2}   follow  from Cauchy--Schwarz and the mean value theorem:  
	\begin{align*} 
		\sum_{\valpha (\mod c)}     \bigg|   \sum_{  {N} < |n| \leqslant     {2N}  }  b_{n}   e \Big[  \frac {\valpha n} {c} \Big] \bigg|^2  \Lt  \big(|c|^2   +  {\varLambda^2 } \big) \sum_{  {N} < |n| \leqslant     {2N}  }  |b_{n}|^2 ,
	\end{align*} 
	which is an easy consequence of the orthogonality relation.
\end{proof}

Let us also record here a useful inequality for the Ramanujan sums: 
	\begin{align}\label{5eq: Ramanujan}
		\sum_{|c| \leqslant C} \bigg( \sum_{  {N} < |n| \leqslant     {2N}  }  |b_{n} S(n, 0; c)|  \bigg)^2  \Lt C^2 N^{2+\vepsilon} \sum_{  {N} < |n| \leqslant     {2N}  }  |b_n |^2, 
	\end{align} 
which is a direct consequence of $ |S(n, 0;c)| \leqslant |(n,c)|^2 $ and Cauchy--Schwarz. 

\subsection{A Hybrid Large Sieve} 

%Let the setting and notation be the same as \S \ref{sec: quad}. 
The large sieve inequality over $\RF = \BQ(i)$ reads (see \cite[Theorem 2]{Huxley-LS-Num-Fields}): 
\begin{align}\label{11eq: large sieve}
	\sum_{|c| \leqslant C}  \ 	\sumx_{\valpha (\mod c)}  \bigg|   \sum_{  {N} < |n| \leqslant     {N} + {\varLambda}}  b_{n}   e \Big[  \frac {\valpha n} {c} \Big] \bigg|^2  \Lt  \big(C^4   +  {\varLambda^2 } \big) \sum_{  {N} < |n| \leqslant     {N} + {\varLambda}}  |b_{n}|^2,
\end{align}  
and we wish to prove a hybrid variant of \eqref{11eq: large sieve} as follows. It is not hard to generalize this into a form analogous to \cite[Lemma 6]{Young-GL(3)-Special-Points}.

\begin{lem}\label{lem: hybrid LS}
	Let $\boldsymbol{b} : \CaloO     \ra \BC$  and $ v \in \BC \smallsetminus \{0\}$.   
	We have 
	\begin{align}\label{11eq: hybrid LS}
		\viint_{\overbar{D}_{\rho}}  \sum_{|c| \leqslant C}  \ 	\sumx_{\valpha (\mod c)}  \bigg|   \sum_{  {N} < |n| \leqslant     {N} + {\varLambda}}  b_{n}   e \Big[  \frac {\valpha n} {c} \Big] e \Big[\frac {n z} {v} \Big]\bigg|^2 \nd z \Lt \big(C^4 \rho^2 +   |v|^2 \big)  \|\boldsymbol{b}_{N, \varLambda} \|_2^2,
	\end{align} 
	where $D_{\rho}  $  is the disc  of radius $\rho$ centered at the origin. 
\end{lem}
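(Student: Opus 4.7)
The plan is to rewrite the left-hand side of \eqref{11eq: hybrid LS} as an $L^2$-integral on the torus $\BC/\CaloO$ weighted by a Farey counting function, and then to bound this counting function by exploiting the Farey spacing in $\BC$.

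First I set $F(y) = \sum_{N < |n| \leqslant N + \varLambda} b_n e[n y]$ and substitute $w = z/v$, whereupon the left-hand side equals
\[
|v|^2 \sum_{|c| \leqslant C} \sumx_{\valpha (\mod c)} \viint_{D_{\rho/|v|}} |F(\valpha/c + w)|^2\, \nd w.
\]
Because $e[\lambda y] = 1$ for every $\lambda \in \CaloO$, the function $|F|^2$ is $\CaloO$-periodic, so each translated disc integral unfolds onto a single fundamental domain; reindexing the resulting triple sum over $(c, \valpha, \lambda)$ by $(c, \beta)$ with $\beta := \valpha + \lambda c \in \CaloO$ satisfying $(\beta, c) = 1$ gives
\[
\mathrm{LHS} \ = \ |v|^2 \viint_{\BC/\CaloO} |F(y)|^2\, \Psi(y)\, \nd y,
\]
where
\[
\Psi(y) := \#\big\{ (c, \beta) : |c| \leqslant C,\ \beta \in \CaloO,\ (\beta, c) = 1,\ |y - \beta/c| \leqslant \rho/|v| \big\}
\]
and the disc condition is now in $\BC$ rather than on the torus.

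The main step is to estimate $\|\Psi\|_\infty$. For any two distinct reduced Farey fractions $\beta/c$ and $\beta'/c'$ with $|c|, |c'| \leqslant C$,
\[
\big| \beta/c - \beta'/c' \big| \ = \ \frac{|\beta c' - \beta' c|}{|cc'|} \ \geqslant \ \frac{1}{|cc'|} \ \geqslant \ \frac{1}{C^2},
\]
since the numerator is a nonzero Gaussian integer. Thus the Farey points form a $1/C^2$-separated set in $\BC$, and a standard packing argument in $\BR^2 \simeq \BC$ shows that any disc of radius $r$ contains at most $\Lt 1 + (rC^2)^2$ of them. With $r = \rho/|v|$ this yields $\|\Psi\|_\infty \Lt 1 + C^4 \rho^2 / |v|^2$.

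Finally, Parseval on the torus reads $\viint_{\BC/\CaloO} |F(y)|^2\, \nd y = \|\boldsymbol{b}_{N, \varLambda}\|_2^2$, and so
\[
\mathrm{LHS} \ \leqslant \ |v|^2\, \|\Psi\|_\infty\, \|\boldsymbol{b}_{N, \varLambda}\|_2^2 \ \Lt \ \big( |v|^2 + C^4 \rho^2 \big) \|\boldsymbol{b}_{N, \varLambda}\|_2^2,
\]
which is the desired bound. The only subtle point is in the counting step: a naive per-denominator count would allow each $c$ to contribute $O(1)$ whenever $\rho/|v| \leqslant 1/C^2$ and sum wastefully to $O(C^2)$; the joint Farey spacing argument rules out this over-counting and directly gives $O(1)$ in that regime, which is exactly what is needed to avoid an extraneous $C^2 |v|^2$ term in the final estimate.
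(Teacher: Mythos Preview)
Your argument is correct and follows a genuinely different route from the paper's.  The paper first rescales to $\rho=1$, then splits into cases: when $\varLambda \Lt |v|$ it simply cites Huxley's large sieve \eqref{11eq: large sieve}, and when $|v| \Lt \varLambda$ it majorizes the disc indicator by a Schwartz function whose Fourier transform has compact support (so that opening the square forces $|m-n| \Lt |v|$), dissects the $n$-range into annuli of width $O(|v|)$, and applies \eqref{11eq: large sieve} on each short piece.  Your proof is more direct and self-contained: the unfolding onto $\BC/\CaloO$ together with the Farey spacing $|\beta/c-\beta'/c'|\geqslant 1/C^2$ and a packing bound essentially reproves the large sieve from scratch while absorbing the continuous $z$-integration in the same stroke, so you never invoke \eqref{11eq: large sieve} as a black box and you avoid the Vaaler-type majorant entirely.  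The trade-off is modularity: the paper's approach plugs any known additive large sieve (over Farey points or more general well-spaced sets) straight into the hybrid framework, which is convenient for the variant mentioned just before the lemma (the analogue of Young's Lemma~6), whereas your argument is tied to the specific Farey structure.  One small point worth making explicit in your write-up: $\Psi(y)$ counts pairs $(c,\beta)$ rather than distinct fractions, but since a reduced fraction $\beta/c$ determines the pair up to a unit in $\CaloO^{\times}$, the multiplicity is at most~$4$ and the packing bound carries over.
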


\begin{proof}
	For the proof we adopt some  arguments of Luo \cite[\S 3]{Luo-Twisted-LS} and Young \cite[\S 6]{Young-GL(3)-Special-Points}.  % which has been used earlier for the error estimation in \S \ref{sec: reductions}. 
	
	By the change of variables $z \ra z  \rho$ and $v \ra v  \rho$, it suffices to consider the case $\rho = 1$. Therefore, if we write $D = D_1$, then our goal is to prove
	\begin{align}\label{11eq: T=1}
		\viint_{\overbar{D} }  \sum_{|c| \leqslant C}  \ 	\sumx_{\valpha (\mod c)}  \bigg|   \sum_{  {N} < |n| \leqslant     {N} + {\varLambda}}  b_{n}   e \Big[  \frac {\valpha n} {c} \Big] e \Big[\frac {n z} {v} \Big]\bigg|^2 \nd z \Lt \big(C^4  +   |v|^2 \big)  \|\boldsymbol{b}_{N, \varLambda} \|_2^2. 
	\end{align}   
	Note that  \eqref{11eq: T=1} follows directly from \eqref{11eq: large sieve} if $\varLambda \Lt |v|$, so we only need to consider the case when $|v| \Lt  \varLambda$. 
	
	Let $\phi$ be a non-negative Schwartz function on $\BC$ such that $\phi  \geqslant 1$ on the unit disc $\overbar{D}$ and that $\widehat{\phi}$ is of compact support. Such $\phi$ may be constructed by product from the Schwartz functions on $\BR$ as in \cite{Vaaler-Fourier}. Thus for  $\boldsymbol{c} : \CaloO     \ra \BC$ supported on $ N < |n| \leqslant N+\varLambda $ we have
	\begin{align*}
		\viint_{D }    \bigg|   \sum_{  n }  c_{n} e \Big[\frac {n z} {v} \Big]\bigg|^2 \nd z \leqslant  	\viint   \phi (z)  \bigg|   \sum_{  n }  c_{n} e \Big[\frac {n z} {v} \Big]\bigg|^2  \nd z 
		= \mathop{\sum \sum}_{m, n} {c}_{m} \overline{c}_{n} \widehat{\phi} \Big( \hskip -1pt        - \frac {m-n} {v} \Big). 
	\end{align*}
	Since $\widehat{\phi}$ is compactly supported, we must have $|m-n| \Lt |v|$. As in \S \ref{sec: reductions}, we dissect the sum over $m$ and $n$ into hyper-annuli $$A_{IJ} = \big\{ (m, n) : |m| \in I, |n| \in J \big\}, $$%$A_{IJ} = A_I \times A_J$ (see Definition \ref{defn: annulus AI}) 
	for $I$ and $J$ sub-intervals of $( {N},  {N+\varLambda}]$ of equal length $\varDelta = O (|v|)$ ($\varDelta \leqslant \varLambda$). The only relevant hyper-annuli  $ A_{IJ}$ are those with $I$ and $J$ either equal or adjacent. Next we reverse the Fourier transform so that the partial sum on such $A_{IJ}$ is turned into
	\begin{align*}
		\viint	\phi (z)   \mathop{\sum \sum}_{(m, n)\in A_{I J} } {c}_{m} \overline{c}_{n} e \Big[   \frac {m-n} {v} \Big]  \nd z.  
	\end{align*} 
	Further, as there are at most three such $A_{IJ}$ for each $I$,  by Cauchy we infer that 
	\begin{align*}
		\viint_{D }    \bigg|   \sum_{ N < |n| \leqslant N+\varLambda }  c_{n} e \Big[\frac {n z} {v} \Big]\bigg|^2 \nd z \leqslant 3  \viint   \phi (z) \sum_{I}   \bigg|   \sum_{  {N_I} < |n| \leqslant     {N_I} + {\varDelta}  }  c_{n} e \Big[\frac {n z} {v} \Big]\bigg|^2 \nd z, 
	\end{align*}
	where $I = (N_I, N_I +\varDelta ]$, say. Finally, specialize this to $ c_{n} = b_{n} e  [    {\valpha n} /{c}  ]  $ and sum over $\valpha $ and $c$, then \eqref{11eq: T=1} follows from \eqref{11eq: large sieve}, applied with $N$ and $\varLambda$ replaced by $N_I$ and $ \varDelta = O (|v|)$. 
\end{proof}

\begin{cor}\label{cor: hybrid LS}
	%	Let $\boldsymbol{b} : \CaloO / \CaloO^{\times}   \ra \BC$ and $ w \in \BC \smallsetminus \{0\}$.   
	We have
	\begin{align}\label{11eq: hybrid LS, 1}
	&	\viint_{\overbar{D}_{\rho} }  \sum_{   |c| \leqslant   C }  \,	\sumx_{\valpha (\mod c)}  \bigg|   \sum_{  {N} < |n| \leqslant     {2N}  }  b_{n}   e \Big[  \frac {\valpha n} {c} \Big] e \Big[\frac {n z} {c v} \Big]\bigg|^2  \nd z \Lt \rho^2 \big(C^4   + C^{\vepsilon} N^2 \big)  \|\boldsymbol{b}_{N} \|_2^2,  \\
		& \label{11eq: hybrid LS, 2}
		\viint_{\overbar{D}_{\rho} }  \sum_{   |c| \leqslant   C }  \,	\sumx_{\valpha (\mod c)}  \bigg|   \sum_{  {N} < |n| \leqslant     {2N}  }  b_{n}   e \Big[  \frac {\valpha n} {c} \Big] e \Big[\frac {n z} {c v} \Big]\bigg|^2  \nd z \Lt \big(C^4 \rho^2  +  C^2 |  v|^2 \big)  \|\boldsymbol{b}_{N} \|_2^2. 
	\end{align}    
\end{cor}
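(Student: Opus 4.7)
The plan is to bootstrap both bounds from Lemma \ref{lem: hybrid LS}; the second is a clean consequence of a dyadic change of variables, while the first requires more delicate analysis exploiting Ramanujan-sum and Bessel-integral cancellations.

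For \eqref{11eq: hybrid LS, 2}, I will decompose the $c$-range dyadically into blocks $|c|\sim C_0$, with $C_0\in\{1,2,4,\ldots\}$ up to $C$. Within each block, for every $c$ I perform the change of variables $w=z/c$: the Jacobian contributes $|c|^2$, the twist $e[nz/(cv)]$ becomes $e[nw/v]$, and the image disc $|w|\leqslant\rho/|c|$ lies inside $\overbar{D}_{2\rho/C_0}$. Extending each non-negative integrand to this common disc, pulling out $|c|^2\leqslant 4C_0^2$, and enlarging the $c$-sum to $|c|\leqslant 2C_0$ puts the expression in a form where Lemma \ref{lem: hybrid LS} applies with $(C,\rho)$ replaced by $(2C_0,2\rho/C_0)$. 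This gives the dyadic bound
\[
C_0^2\bigl((2C_0)^4(2\rho/C_0)^2+|v|^2\bigr)\|\boldsymbol{b}_{N}\|_2^2\Lt(C_0^4\rho^2+C_0^2|v|^2)\|\boldsymbol{b}_{N}\|_2^2,
\]
and summing the resulting geometric series over dyadic $C_0\leqslant C$ produces \eqref{11eq: hybrid LS, 2} at once.

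For \eqref{11eq: hybrid LS, 1}, the target bound is independent of $|v|$, so the change-of-variables strategy above does not suffice. Instead, I will open the inner squared sum and evaluate the $\valpha$-sum as the Ramanujan sum $S(m-n,0;c)$. After integrating in $z$, this recasts the left-hand side as
\[
\sum_{m,n}b_{m}\bar{b}_{n}\sum_{|c|\leqslant C}S(m-n,0;c)\,I_{m,n,c},\quad I_{m,n,c}=\viint_{\overbar{D}_{\rho}}e\Big[\tfrac{(m-n)z}{cv}\Big]\,\nd z,
\]
with $I_{m,n,c}$ evaluating explicitly to a Bessel-type function of $|m-n|\rho/|cv|$. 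The diagonal $m=n$ contributes $\pi\rho^2\sum_{|c|\leqslant C}S(0,0;c)\|\boldsymbol{b}_N\|_2^2\Lt C^4\rho^2\|\boldsymbol{b}_N\|_2^2$, yielding the $C^4\rho^2$ term. For the off-diagonal I will combine the Weil-type bound $|S(k,0;c)|\leqslant|(k,c)|^2$ with the Bessel decay $|I_{m,n,c}|\Lt\rho^2\min\bigl(1,(|cv|/(\rho|m-n|))^{3/2}\bigr)$, split the $c$-sum at the transition $|c|\asymp\rho|m-n|/|v|$, and apply Cauchy--Schwarz together with the divisor estimate $\sum_{|k|\leqslant 2N}\tau(k)\Lt N^{2+\vepsilon}$ to reach the claimed $\rho^2C^{\vepsilon}N^2\|\boldsymbol{b}_N\|_2^2$.

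The principal obstacle lies in the off-diagonal of \eqref{11eq: hybrid LS, 1}: the naïve bounds $|I_{m,n,c}|\leqslant\pi\rho^2$ and $\sum_{|c|\leqslant C}|(k,c)|^2\leqslant C^2\tau(k)$ give only $\rho^2C^2N^{2+\vepsilon}\|\boldsymbol{b}_N\|_2^2$, losing a factor of $C^{2-\vepsilon}$. Recovering the full saving hinges on exploiting the Bessel decay of $I_{m,n,c}$ in the regime $|m-n|\rho>|cv|$ to compensate for the polynomial growth of the Ramanujan-sum bound—an analytic interplay parallel to the role of Weil's bound in the classical Farey-dissection proof of \eqref{11eq: large sieve}.
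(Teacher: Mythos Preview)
Your argument for \eqref{11eq: hybrid LS, 2} is correct and is exactly what the paper does: the change of variables $z\mapsto cz$ followed by a dyadic split in $|c|$ and an application of Lemma~\ref{lem: hybrid LS}.

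Your plan for \eqref{11eq: hybrid LS, 1}, however, has a genuine gap. The bound \eqref{11eq: hybrid LS, 1} is \emph{uniform} in $v$, whereas every saving you propose for the off-diagonal comes from the Bessel decay
\[
|I_{m,n,c}|\Lt\rho^2\min\bigl(1,\ (|cv|/(\rho|m-n|))^{3/2}\bigr),
\]
which is $v$-dependent. In the limit $|v|\to\infty$ one has $I_{m,n,c}\to\pi\rho^2$ for every $c$, so the decay regime $|c|\Lt\rho|m-n|/|v|$ becomes empty and your argument degenerates to the trivial bound $\rho^2 C^{2}N^{2+\vepsilon}\|\boldsymbol{b}_N\|_2^2$ that you yourself flag as insufficient. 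No splitting of the $c$-sum can repair this: the obstruction is that the oscillatory factor $e[(m-n)z/(cv)]$ carries no information once $|v|$ is large.

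The paper's route for \eqref{11eq: hybrid LS, 1} is much simpler and uses the \emph{same} change of variables and dyadic partition you already set up for \eqref{11eq: hybrid LS, 2}. The point is that for fixed $z$ the twist $e[nz/v]$ is unimodular and may be absorbed into $b_n$; hence the classical large sieve \eqref{11eq: large sieve} applies pointwise, giving on the dyadic block $|c|\sim C_0$
\[
C_0^2\int_{\overbar{D}_{\rho/C_0}}\big(C_0^4+N^2\big)\|\boldsymbol{b}_N\|_2^2\,\nd z
\ \Lt\ \rho^2\big(C_0^4+N^2\big)\|\boldsymbol{b}_N\|_2^2 .
\]
Summing over $O(\log C)$ dyadic values of $C_0$ yields $\rho^2\big(C^4+C^{\vepsilon}N^2\big)\|\boldsymbol{b}_N\|_2^2$. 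There is no need to open the square or invoke Ramanujan sums at all.
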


\begin{proof}
	By the change $z \ra c z$ we rewrite the expression on the left  as
	\begin{align*}%\label{11eq: hybrid LS, 2}
		\sum_{   |c| \leqslant   C }  |c|^2	\viint_{\overbar{D}_{\rho/|c|}} \,	\sumx_{\valpha (\mod c)}  \bigg|   \sum_{  {N} < |n| \leqslant     {2N}  }  b_{n}   e \Big[  \frac {\valpha n} {c} \Big] e \Big[\frac {n z} {v} \Big]\bigg|^2  \nd z , 
	\end{align*}  then via a dyadic partition \eqref{11eq: hybrid LS, 1} and \eqref{11eq: hybrid LS, 2} follow  from  \eqref{11eq: large sieve} and \eqref{11eq: hybrid LS} respectively. 
\end{proof}

\delete{
\begin{lem} \label{lem: quad form, 2}
		Let $\boldsymbol{b} : \CaloO   \ra \BC$  and $ q \in \CaloO \smallsetminus \{0\}$. Let $f (z) \in C (D_{\rho})$ be   bounded.  
	Then  
	\begin{align} 
		\label{6eq: quad V, 3.1}     
	&	  \sum_{   |c| \leqslant  C}      {\mathop{\sum \sum}_{ {N} < |m| , |n|  \leqslant    {2N} } }    b_{m} \overline{b }_{n}   V_q (m, n; c)   \viint_{D_{\rho}}   e  \Big[     \frac {  m - n }   {cq} z \Big]   f(z) \nd z 	\Lt \rho^2   \big({C^4 +  C^{\vepsilon} N^2}\big) \|\boldsymbol{b}_{N} \|_2^2   ,  \\
	\label{6eq: quad V, 3.2}     
	&	\sum_{    |c| \leqslant C}      {\mathop{\sum \sum}_{ {N} < |m| , |n|  \leqslant    {2N} } }    b_{m} \overline{b }_{n}   V_q (m, n; c)   \viint_{D_{\rho}}   e  \Big[     \frac {  m - n }   {cq} z \Big]   f(z) \nd z 	\Lt    \big({C^4 \rho^2 +  C^2 |q|^2}\big) \|\boldsymbol{b}_{N} \|_2^2   .   
\end{align} 
\end{lem}

\begin{proof}
	 After opening $V_q (m, n; c)$ by \eqref{1eq: defn V}, this lemma is a direct consequence of Cauchy--Schwarz and Corollary \ref{cor: hybrid LS}.  
\end{proof}
}

\section{Application of Kuznetsov} \label{sec: apply Kuznetsov}

We start by extending the spectral sum onto $\Pi_{c}$ (see the remark  below \eqref{2eq: sum, cusp}	 and \eqref{2eq: sum, Eis}),  choosing the spectral weight in   \eqref{1eq: Kuznetsov}  to be $h (\vkappa, p; \sqrt{m/n})$ as defined in \eqref{4eq: choice of h}, multiplying both sides of \eqref{1eq: Kuznetsov}  by ${a}_{(m)}  {a}_{(n)} $, and then summing over $m, n \in \CaloO$ with $  {N} < |m|, |n| \leqslant  {2N} $.  Note that $$\Re ( \vchi_{i \vkappa,  p} (m/n) ) =   \cos  (  \vkappa \log |m/n| +   p \arg (m/n)  ), $$
so the Kuznetsov formula \eqref{1eq: Kuznetsov}   in Lemma \ref{lem: Kuznetsov}  yields
\begin{align}\label{6eq: after Kuz}
	\SC  (\boldsymbol{a}) + \SE    (\boldsymbol{a}) =  \frac {1} {32 \pi^3 } P (\boldsymbol{a}) + O \big( KP \big(K^2+P^2\big) 	\| \boldsymbol{a}_N \|_2^2 \big)  , 
\end{align}
with
\begin{align}\label{6eq: P(a)}
	P (\boldsymbol{a}) =  \sum_{c}  	\mathop{\sum\sum}_{m, n}   {a}_{(m)}  {a}_{(n)} \frac{S (m, n;c)} {|c|^2} \SDH \bigg( \frac {2\pi \sqrt{mn}} {c} ; \sqrt{\frac {m} {n} } \bigg) .  
\end{align} 
It follows from $m \leftrightarrow n$ that % and sign changes (in the integral and the sum) that
\begin{align}\label{6eq: P(a), 2}
	P (\boldsymbol{a}) =   \Re      \sum_{c}  \frac 1 {|c|^2}	\mathop{\sum  \sum}_{m, n}   {a}_{(m)}  {a}_{(n)}  {S (m, n;c) e \Big[\frac {m+n} {c}     \Big]}  \SDI \bigg(     2\pi \frac {m      +     n} {c},   2\pi \frac {m  -      n} {c}    \bigg) .  
\end{align}
See \eqref{5eq: Bessel H(z)}--\eqref{4eq: v, w = m+-n} for the expressions of the Bessel $\SDH$ and $\SDI$ integrals.

\section{Proof of Theorem \ref{thm: prelim}} \label{sec: proof prelim}
As remarked below \eqref{4eq: v, w = m+-n},   $m$ and $n$ become separate in the (variant) Bessel integral, so Lemma \ref{lem: quad form} may be applied easily inside the integrals. 

\delete{Define $	\varPhi_q (c; \boldsymbol{a}) $ to be the inner $(m, n)$-sum in \eqref{6eq: P(a), 3}:
	\begin{align}\label{8eq: defn of Phi}
		\varPhi_q (c; \boldsymbol{a}) = \mathop{\sum\sum}_{m, n}   {a}_{(m)}  {a}_{(n)}  {V_q (m, n;c)}   \SDI \bigg(      2\pi \frac {m  +   n} {c q},   2\pi \frac {m   -    n} {c q} \hskip -1pt \bigg) . 
\end{align} }

Define  $\varPhi (c; \boldsymbol{a})$  to be the inner $(m, n)$-sum in \eqref{6eq: P(a), 2}:
	\begin{align}\label{8eq: Phi}
		\varPhi (c; \boldsymbol{a}) =	\mathop{\sum   \sum}_{m, n}   {a}_{(m)}  {a}_{(n)}  {S (m, n;c) e \Big[\frac {m+n} {c}     \Big]}  \SDI \bigg(      2\pi \frac {m      +     n} {c},   2\pi \frac {m  -      n} {c}   \bigg) . 
	\end{align}

If $|c | > N^2$ then by   \eqref{4eq: I(v;w), 2}  and \eqref{6eq: Kloosterman, 1} we get
\begin{align}\label{7eq: bound for Phi(c;a), 1}
	\varPhi (c; \boldsymbol{a})   \Lt   \frac {\tau^2 (c) N^4} {|c| }   \|\boldsymbol{a}_N \|_2^2. 
\end{align}

If $ {N} < |c| \leqslant N^2$ then by   \eqref{4eq: I(v;w), 2} and \eqref{6eq: Kloosterman, 2} we get
\begin{align}\label{7eq: bound for Phi(c;a), 2}
	\varPhi (c; \boldsymbol{a})   \Lt   N^2   \|\boldsymbol{a}_N \|_2^2. 
\end{align}

If $|c| \leqslant  {N}    $ then by   \eqref{4eq: I(v;w), 1} and \eqref{6eq: Kloosterman, 2}  we get
\begin{align}\label{7eq: bound for Phi(c;a), 3}
	\varPhi (c; \boldsymbol{a})  \Lt  \big(K^2+P^2\big)  N^2 \|\boldsymbol{a}_N \|_2^2 . 
\end{align}

%If $|c| \leqslant N^{1+\vepsilon} (1/K^2+1/P^2) $ then by  \eqref{4eq: integral H(z)} and \eqref{1eq: Kloosterman, 1} we get
%\begin{align}\label{7eq: bound for Phi(c;a), 3.2}
%	\varPhi (c; \boldsymbol{a})  \Lt  \bigg(\frac {K} {P} + \frac {P} {K} \bigg)^2 N^{3+\vepsilon}  \|\boldsymbol{a}_N \|_2^2 . 
%\end{align}

By the estimates \eqref{7eq: bound for Phi(c;a), 1} and \eqref{7eq: bound for Phi(c;a), 2},  it follows from \eqref{6eq: P(a), 2} that
\begin{align}\label{8eq: c < N}
	  P    (\boldsymbol{a}) =   \Re      \sum_{ |c| \leqslant N}   \frac {\varPhi  (c; \boldsymbol{a})} {|c|^2}   + O \big(   N^{2+\vepsilon}  	\| \boldsymbol{a}_N \|_2^2 \big) . 
\end{align}

Moreover, if \eqref{7eq: bound for Phi(c;a), 3} is taken into account, then by   \eqref{6eq: after Kuz} we have
	\begin{align}
		 \SC  (\boldsymbol{a}) + \SE (\boldsymbol{a}) \Lt \big(K^2 + P^2\big) \big(  K P +   { N^{2+\vepsilon}}    \big) \|\boldsymbol{a}_{N} \|_2^2, 
	\end{align}
and consequently Theorem \ref{thm: prelim}.

\section{Reductions for the Application of Poisson}  \label{sec: reductions}

In the rest of this paper, we shall always assume $K = P = T$. 

In this section, we proceed as in \S \ref{sec: further analysis} to extract a main term from 	$P (\boldsymbol{a})$, with the aid of   Lemmas \ref{lem: quad form}   for the error estimation. 

As the main term of $ \SDI (v, w)  $ is equal to $  |  \pi w      |^2 h (w/2)$ (see \eqref{5eq: integral H(z), asym} and \eqref{5eq: h(w)}), so that of $P (\boldsymbol{a})$ reads: 
\begin{align}\label{8eq: Q(a), 0} 
	Q (\boldsymbol{a}) =	  {4 \pi^4}   \Re \sum_{c}  \frac 1 {|c |^4}	\mathop{\sum\sum}_{m, n}   {a}_{(m)}  {a}_{(n)} |m-n|^2    {S (m, n;c)e \Big[\frac {m+n} {c}     \Big]}    h \Big(  \pi \frac {m-n} {c} \Big). 
\end{align}
Similar to \eqref{8eq: c < N}, one may restrict to $|c| \leqslant N$ at the cost of an error $O \big(   N^{2+\vepsilon}  	\| \boldsymbol{a}_N \|_2^2 \big)$, by using
\begin{align}\label{8eq: h (w) = int, 1}
	h (w/2) = \viint 		\cos (\Re (w       (r + i \omega) ))    k  (r) k (\omega)  \nd r \nd \omega
\end{align}
to separate $m$ and $n$ (see \eqref{5eq: h(r), h(w)} and \eqref{5eq: h(w)}).

%For simplicity, assume as we may that $K, P >	(K+P)^{\vepsilon} $ so that b
By \eqref{6eq: Kloosterman, 1}  the error terms in Steps 1, 2, and 4 are all $O \big(    N^{3+\vepsilon}/T^2 \cdot \|\boldsymbol{a}_{N} \|_2^2 \big)$, say.  
However,  Step 3 requires   more delicate analysis. To this end,  some arguments from  \cite[\S 3]{Luo-Twisted-LS} will be adopted. 

Let $E  (c; \boldsymbol{a})$ denote the error-term contribution to $\varPhi  (c; \boldsymbol{a})$ as in Step 3.

For $ N^{1-\vepsilon} /T  < |c| \leqslant N$,  in view of     \eqref{5eq: f (r,w), 2}--\eqref{5eq: I(v, w), 2},  \eqref{5eq: approx}, and \eqref{8eq: Phi},    by \eqref{6eq: Kloosterman, 2} we infer that 
\begin{align*}%\label{7eq: error bound 1}
E  (c; \boldsymbol{a}) \Lt 	 	 \frac {N} {|c|} \cdot  N^2 \|\boldsymbol{a}_{N} \|_2^2  <  T N^{2+\vepsilon}  \|\boldsymbol{a}_{N} \|_2^2 . 
\end{align*}
Here one needs to use   Taylor expansion to keep $m$ and $n$ separate, for example, as follows:
\begin{align*}
	e \Big[     \frac {m+n} {c} (\trh (r, \omega)-1) \Big] =   1 +  \sum_{k=1}^{\infty}  \frac {  (2\pi i)^k } {k!} \bigg(  \Re \Big( \frac {m+n} {c} (\trh (r, \omega)-1) \Big)   \bigg)^k   . 
\end{align*}
This argument  only works effectively  when 
\begin{align*}%\label{8eq: worst case}
	\frac {N} {|c| T^2 }      < 1,
\end{align*}
so that the Taylor series are rapidly convergent.

%We split the $c$-sum in \eqref{6eq: P(a), 2} or the $(c, q)$-sum  in \eqref{6eq: P(a), 3} into dyadic segments $C /2 < |c| \leqslant   C $ or $C /2 < |cq| \leqslant   C$ and denote by $ E_C    (\boldsymbol{a})  $ their error-term contribution as in Step 3. By \eqref{8eq: c < N}  it suffices to consider the case when $C \leqslant N $.   

For $ |c| \leqslant N^{1-\vepsilon} / T   $ define 
\begin{align}\label{7eq: defn of H}
	\varDelta =      \max \left\{ {|c|} T     N^{ \vepsilon}  , \, \sqrt[3]{ \frac  {  |c| N^{2+\vepsilon} } {T} }   \right\}   . 
\end{align} 
Next, we split the $m$- and $n$-sums in $ E (c; \boldsymbol{a}) $    into hyper-annuli $$A_{IJ} = \big\{ (m, n) : |m| \in I, |n| \in J \big\}, $$
where $I$ and $J$ range over sub-intervals of $( {N},  {2N}]$ of equal length $ {\varDelta} $.  If $I$ and $J$ are neither equal nor adjacent then we have $| m-n | > \varDelta $, and it follows from \eqref{4eq: bound for H(z;u)} and the choice of $\varDelta$ in \eqref{7eq: defn of H} that   $ \SDI_{\natural} (2\pi (m+n)/c   , 2\pi (m-n)/ c  )  $ is negligibly small. Then by \eqref{6eq: Kloosterman, 1} the contribution to $E (c; \boldsymbol{a})$ of such hyper-annuli  may be trivially bounded by $ O \big( \|\boldsymbol{a}_{N} \|_2^2 \big) $, say, so we only need to consider that of the remaining hyper-annuli. 
 It follows that  
\begin{align*}
E (c; \boldsymbol{a}) \Lt   	\frac {N} {|c|}     \big(|c|^2 +  \varDelta^2\big) \|\boldsymbol{a}_{N} \|_2^2     , 
\end{align*}
by \eqref{6eq: Kloosterman, 2.1} in the case $N /T^2  < |c| \leqslant   N^{1-\vepsilon} / T  $, and
\begin{align*}
E (c; \boldsymbol{a}) \Lt	 	  \tau^2 (c)  \varDelta^2 N \|\boldsymbol{a}_{N} \|_2^2 , 
\end{align*}
by  \eqref{6eq: Kloosterman, 1.1} in the case $|c|  \leqslant N  /T^2  $. Note that the sum of $ \|\boldsymbol{a}_{N_I, \varDelta } \|_2^{} \cdot        \|\boldsymbol{a}_{N_J, \varDelta } \|_2^{} $ (say $I = (N_I, N_I +\varDelta  ]$) with $I$ and $J$ equal or adjacent is bounded by $ 3 \|\boldsymbol{a}_{N} \|_2^2 $.  Moreover,  in the former case $\varDelta =  |c| T    N^{ \vepsilon}$ and hence
\begin{align*}
	E (c; \boldsymbol{a}) \Lt   {|c| T^2N^{1+\vepsilon} }    \|\boldsymbol{a}_{N} \|_2^2  \Lt   T {  N^{2+\vepsilon} }    \|\boldsymbol{a}_{N} \|_2^2 , 
\end{align*}
while in the latter case $\varDelta = \sqrt[3]{|c|N^2 / T} N^{\vepsilon}   $ and hence
\begin{align*}
	E (c; \boldsymbol{a}) \Lt  \tau^2 (c) |c|^{2/3} \frac {N^{7/3}} {T^{2/3}} N^{\vepsilon} \|\boldsymbol{a}_{N} \|_2^2 \Lt  \frac {  N^{3+\vepsilon} }   {T^2} \|\boldsymbol{a}_{N} \|_2^2 .
\end{align*}

By gathering the error bounds for $E(c; \boldsymbol{a})$ above and summing over $c$ as in \eqref{8eq: c < N},  we conclude that 
\begin{equation} \label{7eq: P=Q}
	P  (\boldsymbol{a})  =    Q (\boldsymbol{a}) + O \bigg(    \bigg( T +  \frac N {T^2} \bigg) N^{2+\vepsilon} \|\boldsymbol{a}_{N} \|_2^2 \bigg) ,
\end{equation} 
where $Q  (\boldsymbol{a}) $ is defined by \eqref{8eq: Q(a), 0}.  

{\begin{remark}\label{rem: Luo}
	Our asymptotic formula {\rm\eqref{7eq: P=Q}} is reminiscent of Luo's {\rm(36)} in \cite{Luo-Twisted-LS}. However, our error term is relatively  weaker, since the approximation $$ \frac {m+n} c \trh  (r, \omega)    = \frac {m+n} c  + O  \bigg(  \frac {N (r^2 + \omega^2)} {|c|} \bigg) $$ is {\it not} as strong as Luo's   $$    \frac {m+n} c \cos \delta   = \frac {m+n} c  + O \bigg(\frac { N \delta^2} c\bigg) . $$   
%	Note that Luo obtains satisfactory error bounds by applying 
%	\begin{align*} 
%		\sum_{\valpha (\mod c)}     \bigg|   \sum_{  {N} < n \leqslant     {N} + {\varLambda}}  b_{n}   e \Big(   \frac {\valpha n} {c} \Big) \bigg|^2  \Lt  \big(C   +  {\varLambda} \big)  \sum_{  {N} < n \leqslant     {N} + {\varLambda}}  |b_{n}|^2 ,
%	\end{align*} 
%to the quadratic forms with Kloosterman sums. 
\end{remark}}

Next, by \eqref{1eq: defn Kloosterman} we write 
\begin{align*}
	S (m, n; c)  e\Big[\frac {m+n} {c} \Big] = \sumx_{   \valpha  (\mathrm{mod} \, c) } e \bigg[  \frac {  (1-\valpha) m +  (1- \widebar{\valpha}) n } {c} \bigg],
\end{align*}
and split the sum according to the ideal $(1-\valpha, c) = (q)$. Thus $c = d q$ and $\valpha = 1 - \widebar{\delta} q$, where $\delta$ ranges over residue classes in $\CaloO / d \hskip 1pt \CaloO$  such that $  (\delta (q-\delta), d) = (1)$. We obtain
\begin{align*}
	S (m, n; c)  e\Big[\frac {m+n} {c} \Big] = \frac 1 4 \sum_{dq=c} \mathop{\sum_{\delta (\mathrm{mod}\, d)}}_{ (\delta (q-\delta), d) = (1) }  e \bigg[  \frac {  \widebar{\delta} m +  \overline{q - \delta} n } {d} \bigg] . 
\end{align*}
Therefore, in view of \eqref{1eq: defn V} and  \eqref{8eq: Q(a), 0}, we have 
\begin{align}\label{8eq: Q(a)} 
	Q (\boldsymbol{a}) =	  {\pi^4}   \Re \mathop{\sum\sum}_{c, q}  \frac 1 {|c q|^4}	\mathop{\sum\sum}_{m, n}   {a}_{(m)}  {a}_{(n)} |m-n|^2     {V_{q} (m, n; c)}      h \Big(  \pi \frac {m-n} {c q} \Big). 
\end{align}

Finally, let  $ 1\leqslant X \leqslant T $ be a parameter to be chosen optimally later. Let $Q  (\boldsymbol{a}; X)$ be the partial sum of  \eqref{8eq: Q(a)}  restricted by $|c| \leqslant X$. It follows from the hybrid large sieve in Corollary \ref{cor: hybrid LS} that
\begin{align}\label{7eq: Q(a)=Q(a;X)}
	Q (\boldsymbol{a}) = Q (\boldsymbol{a}; X) + O \bigg(   \frac {T^2 N^{2 +\vepsilon}} {X^2}    \|\boldsymbol{a}_{N} \|_2^2 \bigg). 
\end{align}
 More explicitly, for $ X \leqslant C \leqslant N /2 $, we denote by   $Q_C (\boldsymbol{a})$ the partial sum over $C < |c| \leqslant 2 C$.  %As discussed bellow \eqref{8eq: Q(a), 0}, we may truncate the $q$-sum at $|q| \leqslant N$. 
Then we separate $m$ and $n$ by \eqref{8eq: h (w) = int, 1}, open $V_q (m,n; c)$ by \eqref{1eq: defn V},  and apply Corollary \ref{cor: hybrid LS} with $\rho =  N^{\vepsilon}/T$ to deduce  
\begin{align*}%\label{7eq: error bound 1}
	Q_C    (\boldsymbol{a})     & \Lt \sum_{q} \frac {N^2} {|C q|^4}  T^2  \bigg( \frac  {C^4}{T^2} + \min \bigg\{ \frac {N^2} {T^2},  C^2 |q|^2  \bigg\}      \bigg) N^{\vepsilon} \|\boldsymbol{a}_{N} \|_2^2 \Lt \bigg( 1  + \frac {T^2} {C^2} \bigg)  N^{2+\vepsilon} \|\boldsymbol{a}_{N} \|_2^2 . 
\end{align*}
%here it is easier as $\varDelta = (K+P)C N^{\vepsilon}$ is used in place of  \eqref{7eq: defn of H}. %, due to the exponential decay of $h  (  w /2  )$. 
 
 %\begin{align}\label{8eq: h (w) = int, 2}
 %|\pi w|^2	h (w/2) = - \viint 		\cos (\Re (w       (r + i \omega) ))     ( k ''(r) k  (\omega) +  k  (r) k '' (\omega) ) \nd r \nd \omega, 
 %\end{align}
%instead of \eqref{8eq: h (w) = int, 1} and   $\varDelta = (K+P)C N^{\vepsilon}$  instead of  \eqref{7eq: defn of H},  we infer that 
 
 \section{Application of Poisson} \label{sec: Poisson}
 
Let us rearrange the sum $Q (\boldsymbol{a}; X)$ as follows
 	\begin{align*}%\label{8eq: Q0(a)} 
 		Q  (\boldsymbol{a}; X) =	\pi^4  \Re  \sum_{|c| \leqslant X}  \frac 1 {|c |^4}	\mathop{\sum\sum}_{m, n}   {a}_{(m)}  {a}_{(n)} \cdot |m-n|^2  \sum_{q}     \frac  {V_{q} (m, n; c)}  {| q |^4}   h \Big(  \pi \frac {m-n} {c q} \Big). 
 	\end{align*} 
 Following   Iwaniec,   Li, and   Young \cite{Iwaniec-Li-Ortho,Young-GL(3)-Special-Points}, we now proceed to apply the Poisson summation for the $q$-sum modulo $c$. For important technical reasons we first introduce a real-valued cut-off function $\eta \in C^{\infty} (0, \infty)$ with $\eta \equiv 0$ on $(0, 1/2]$ and $ \eta \equiv 1 $ on $[1, \infty)$ so that   the inner $q$-sum  may be rewritten as
\begin{align*}
 |m-n|^2 \sum_{  q }     {V_{q} (m, n; c)}   \frac {\eta (|q|)} {|  q|^4}   h \Big(  \pi \frac {m-n} {c q} \Big). 
\end{align*} Then we use Poisson to transform it into
 \begin{align*}
 	 \Big|   \frac {m-n} {c  }   \Big|^2 \sum_{\valpha (\mathrm{mod}\, c)} V_{\valpha} (m,n ; c)   \sum_{ q }  e \Big[ \frac {\valpha q} {c} \Big] \viint  \eta (|z|) h   \Big(  \pi \frac {m-n} {c z} \Big) e  \Big[-   \frac {q z} {c}    \Big]   \frac {\nd z} {|z|^4} .
 \end{align*}
Note that 
\begin{align*}
	\sum_{\valpha (\mathrm{mod}\, c)} V_{\valpha} (m,n ; c) e \Big[ \frac {\valpha q} {c} \Big] = S (m, q; c) S(n, q; c), 
\end{align*}
since by \eqref{1eq: defn V} it equals the double exponential sum 
\begin{align*}
	\mathop{\mathop{\sum \sum}_{\valpha, \beta (\mathrm{mod}\, c)}}_{ (\beta (\valpha-\beta), c) = (1) }   e \bigg[  \frac {  \widebar{\beta} m +  \overline{\valpha - \beta} n + \valpha q} {c} \bigg]  = {\mathop{\sumx \sumx}_{ \beta, \gamma (\mathrm{mod}\, c)}}  e \bigg[  \frac {  \widebar{\beta} m +  \overline{\gamma} n + (\beta + \gamma) q} {c} \bigg].
\end{align*} 
Let us introduce the Fourier integral
\begin{align}\label{9eq: f (w;v)}
	f (w; v ) =  {|w |^2}   \viint  \eta (|z|) h   (   w/z    ) e  [-   { v z}    ]   \frac {\nd z} {|z|^4}  . 
\end{align} 
Note that $2 \Re f (w; v) = f (w; v) + f (-w; - v)$.  

Consequently,  we obtain after Poisson:
\begin{align}\label{8eq: Q(a)=Z(a)+S(a)}
	Q  (\boldsymbol{a}; X) = Z (\boldsymbol{a}; X) + S (\boldsymbol{a}; X), 
\end{align}
where $Z (\boldsymbol{a}; X)$ is the zero frequency
\begin{align}
	Z (\boldsymbol{a}; X) = 	\pi^2  \sum_{|c| \leqslant X}  \frac 1 {|c |^4}	\mathop{\sum\sum}_{m, n}   {a}_{(m)}  {a}_{(n)} S (m, 0; c) S(n, 0; c) f \Big(   \pi \frac {m-n} { c}; 0 \Big), 
\end{align}
and $S (\boldsymbol{a}; X)$ is the dual sum 
\begin{align}\label{9eq: S (a;X)}
	S (\boldsymbol{a}; X) = \pi^2    \sum_{|c| \leqslant X}  \frac 1 {|c |^4}	\mathop{\sum\sum}_{m, n}   {a}_{(m)}  {a}_{(n)} \sum_{q \neq 0} S (m, q; c) S(n, q; c) f \Big(   \pi \frac {m-n} { c}; \frac {q} {c} \Big) . 
\end{align}

Next, our aim is to establish the following asymptotic results on  $Z (\boldsymbol{a}; X) $ and $S (\boldsymbol{a}; X) $. This will be done  in \S \S  \ref{sec: zero frequency} and \ref{sec: dual sum}. 
 
\begin{prop}\label{prop: Z}
Let $ 1 \leqslant X \leqslant T < N $. 	We have
	\begin{align}\label{9eq: Z, asymp}
		Z (\boldsymbol{a}; X) = \pi^3   \varSigma (\boldsymbol{a}) + O \bigg(  \bigg(\frac {T^2 N^{2 } } {X^2} + T^4\bigg) N^{\vepsilon} \|\boldsymbol{a}_{N} \|_2^2 \bigg), 
	\end{align}
where $\varSigma (\boldsymbol{a})$ is defined by {\rm\eqref{2eq: Sigma (a)}}. 
\end{prop}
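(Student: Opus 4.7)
My plan is as follows.

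First I would evaluate $f(w; 0)$ explicitly via the inversion $u = w/z$ in~\eqref{9eq: f (w;v)}. Since this change of variables sends $\nd z/|z|^4$ to $\nd u/|w|^2$, we obtain
\[
f(w; 0) = \viint \eta(|w/u|)\, \exp(-|u|^2/T^2)\, \nd u.
\]
As $\viint \exp(-|u|^2/T^2)\,\nd u = \pi T^2$ and $1 - \eta(|w/u|)$ is supported on $|u| \geq |w|$, this yields the key estimate
\[
f(w; 0) = \pi T^2 + O\bigl(T^2 \exp(-|w|^2/T^2)\bigr).
\]

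Next I would decompose $Z(\boldsymbol{a}; X) - \pi^3 \varSigma(\boldsymbol{a})$ as a tail contribution (completing the $c$-sum from $|c| \leq X$ to all $c$ with the constant $\pi T^2$ in place of $f$) plus a weight error (replacing $f$ by $\pi T^2$ on $|c| \leq X$). For the tail, the inner $(m,n)$-sum equals $\bigl(\sum_n a_{(n)} S(n, 0; c)\bigr)^2 \geq 0$; using $|f| \leq \pi T^2$ and a dyadic application of~\eqref{5eq: Ramanujan} to $C < |c| \leq 2C$ with $C \geq X$ produces a contribution $\Lt T^2 N^{2+\vepsilon}/X^2 \cdot \|\boldsymbol{a}_N\|_2^2$, which accounts for the first term in the error of~\eqref{9eq: Z, asymp}.

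For the weight error I would further split the range of $|c|$. When $|c| > N/T$, the Gaussian decay of $f - \pi T^2$ is ineffective since $|\pi (m-n)/c| \Lt T$, so bounding $|f - \pi T^2| \leq \pi T^2$ and applying~\eqref{5eq: Ramanujan} dyadically over $|c| > N/T$ gives $\Lt \pi^3 T^2 \cdot N^{2+\vepsilon}/(N/T)^2 \cdot \|\boldsymbol{a}_N\|_2^2 = \pi^3 T^4 N^{\vepsilon} \|\boldsymbol{a}_N\|_2^2$. When $|c| \leq N/T$, the Gaussian restricts the effective pairs to $|m-n| \Lt |c|T N^{\vepsilon}$; I would write the Gaussian weight as a Fourier integral in a dual variable $\xi$ supported in a disc of radius $\asymp N^{\vepsilon}/(|c|T)$, open $S(m, 0; c) = \sumx_{\valpha (\mathrm{mod}\, c)} e[\valpha m/c]$, and invoke the hybrid large sieve (Corollary~\ref{cor: hybrid LS})---mirroring the argument used for $Q_C(\boldsymbol{a})$ at the end of \S~\ref{sec: reductions}---to bound this contribution also by $\Lt T^4 N^{\vepsilon} \|\boldsymbol{a}_N\|_2^2$.

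The main obstacle is the weight error in the small-$|c|$ range: a naive Cauchy--Schwarz combined with the Weil bound for Ramanujan sums only yields $T^2 N^{2+\vepsilon}$, so obtaining the sharper $T^4 N^{\vepsilon}$ requires the delicate interplay between the Gaussian window of width $\asymp 1/(|c|T)$ in $\xi$ and the Farey-type dissection supplied by the hybrid large sieve, in the spirit of Young~\cite{Young-GL(3)-Special-Points}.
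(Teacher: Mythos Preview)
Your plan is essentially the paper's own argument: change variables in $f(w;0)$, extract the main term $\pi T^2$ (giving $\pi^3\varSigma(\boldsymbol a)$ plus a tail bounded via~\eqref{5eq: Ramanujan}), then separate $m,n$ in the remainder by a Fourier integral, open the Ramanujan sums, and apply the hybrid large sieve from Corollary~\ref{cor: hybrid LS}. Your extra split at $|c|=N/T$ is harmless but unnecessary---the paper treats all $|c|\leqslant X$ at once, since after integration by parts the remainder is \emph{exactly}
\[
f(w;0)-\pi T^2 \;=\; \pi T^2 \int_1^2 \psi'(x)\,\exp\!\big(-|wx/T|^2\big)\,\nd x,\qquad \psi(x)=\eta(1/x)-1,
\]
a superposition of genuine Gaussians, and a single application of~\eqref{11eq: hybrid LS, 2} over $|c|\leqslant X$ already gives $O\big(T^2(X^2+T^2)N^{\vepsilon}\|\boldsymbol a_N\|_2^2\big)=O\big(T^4N^{\vepsilon}\|\boldsymbol a_N\|_2^2\big)$.

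One point to tighten: the step ``write the Gaussian weight as a Fourier integral'' must use the \emph{exact} remainder $f(w;0)-\pi T^2$, not the $O\big(T^2\exp(-|w|^2/T^2)\big)$ bound. Majorizing by a Gaussian does not control the bilinear form $\sum_{m,n}a_{(m)}a_{(n)}S(m,0;c)S(n,0;c)\,(\cdot)$, which is not positive definite; and once you take absolute values you can no longer open $S(m,0;c)$ to feed into the large sieve. Your own formula $f(w;0)-\pi T^2=-\int(1-\eta(|w/u|))\exp(-|u|^2/T^2)\,\nd u$ is a radial Schwartz function of $w$, so its $2$-dimensional Fourier transform is again Schwartz and concentrated on $|\xi|\Lt 1/T$; using this (or the paper's integrated-by-parts form) makes the separation rigorous.
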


\begin{prop}\label{prop: S}
Let $ 1 \leqslant X \leqslant T < N $. 	 We have 
	 \begin{align}\label{8eq: S(a;X)=S0(a;X)}
	 	S (\boldsymbol{a}; X) = S_0 (\boldsymbol{a}; X) + O   \big(  \|\boldsymbol{a}_{N} \|_2^2 \big), 
	 \end{align}
 where
 \begin{equation}\label{8eq: S0(a;X)}
 	\begin{split}
 		S_0 (\boldsymbol{a}; X) \Lt   T^4 N^{\vepsilon}   \sum_{|c| \leqslant X}  \frac 1 {|c|^2}  	\sum_{0 < |q| \leqslant |c|   N^{\vepsilon}     }   \frac 1 {|q|^2}    \viint_{\overbar{D}_{  N^{\vepsilon}/T }}    \bigg|  \sum_{ n}    {a}_{(n)}  S (n, q; c) e \Big[ \frac {n} {c} u  \Big] \bigg|^2 \hskip -1pt \nd u ,
 	\end{split}
 \end{equation}
and hence
\begin{align}	\label{8eq: S(a;X)}
	S (\boldsymbol{a}; X) \Lt  T^4   X^2   N^{\vepsilon} \|\boldsymbol{a}_{N} \|_2^2 .
\end{align}
\end{prop}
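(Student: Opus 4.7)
The plan is in three movements: first, apply the 2-dimensional Gaussian Fourier identity to separate the $m$- and $n$-variables inside $f(\pi(m - n)/c;\, q/c)$; second, localise the $(q, u)$-support via integration by parts, taming the factor $|m - n|^2$ by an extra Laplacian identity in $u$; and third, invoke the hybrid large sieve \eqref{11eq: hybrid LS, 2} to reach \eqref{8eq: S(a;X)}.

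Concretely, for the first step, I would use the Gaussian self-duality with $R = |z|T$ and substitute the dual variable $u = \pi\bar\xi$ to obtain $h(w/z) = (|z|^2 T^2/\pi) \viint \exp(-|z|^2 T^2 |u|^2)\, e[w u /\pi]\, \nd u$. For $w = \pi(m - n)/c$ the phase becomes $e[(m - n) u/c] = e[m u /c] \cdot \overline{e[n u /c]}$, which factors multiplicatively in $m$ and $n$. Plugging into \eqref{9eq: f (w;v)} and interchanging integrals yields
\begin{equation*}
f\big(\pi (m - n)/c;\, q/c\big) = \frac{\pi T^2 |m - n|^2}{|c|^2} \viint e\bigg[\frac{(m - n) u}{c}\bigg]\, G(q/c, u)\, \nd u,
\end{equation*}
where $G(v, u) = \viint \eta(|z|) \exp(-|z|^2 T^2 |u|^2)\, e[-v z]\, \nd z/|z|^2$.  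The Gaussian factor in $u$ forces the $u$-integral onto $\overbar{D}_{N^\varepsilon /T}$ up to super-polynomial error, and repeated integration by parts in $z$ via $\Delta_z e[-vz] = -4\pi^2 |v|^2 e[-vz]$ at $v = q/c$ removes the range $|q| > |c| N^\varepsilon$ at the cost of $O(\| \boldsymbol{a}_N \|_2^2)$ (using the Weil bound \eqref{6eq: Kloosterman, 1}).

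The factor $|m - n|^2$ is then to be removed via the identity $|m - n|^2\, e[(m - n) u /c] = -(|c|^2/(4\pi^2))\, \Delta_u e[(m - n) u /c]$, followed by integrating by parts in $u$ to transfer $\Delta_u$ onto $G(q/c, u)$. Using the reality of $S(n, q; c)$ in the Gaussian-integer setting, the $(m, n)$-sum assembles into $|\sum_n a_{(n)} S(n, q; c) e[n u /c]|^2$. The essential estimate in this step is
\begin{equation*}
|\Delta_u G(q/c, u)| \Lt \frac{|c|^2 T^2}{|q|^2}\, N^{\varepsilon}, \qquad |u| \leqslant N^\varepsilon /T, \;\; |q| \geqslant 1,
\end{equation*}
which I would obtain by the Gaussian change of variables $z = z'/(T|u|)$ (absorbing exactly the $1/|u|^2$ introduced by $\Delta_u$) followed by oscillatory integration by parts in $z'$ (producing the gain $|c|^2/|q|^2$ from $e[-q z/c]$). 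This yields $S(\boldsymbol{a}; X) = S_0(\boldsymbol{a}; X) + O(\|\boldsymbol{a}_N \|_2^2)$ with $S_0$ satisfying \eqref{1eq: S0(a;X)}.

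For the large-sieve bound in \eqref{8eq: S(a;X)}, I would open $S(n, q; c) = \sumx_{\valpha \shskip (\mod c)} e[(n \valpha + q \widebar{\valpha})/c]$, apply Cauchy--Schwarz in $\valpha$ at the cost of $\phi(c) \Lt |c|^2$, and use $\sum_{0 < |q| \leqslant |c| N^\varepsilon} 1/|q|^2 \Lt N^\varepsilon$ to drop the $q$-sum. What remains is precisely the left-hand side of \eqref{11eq: hybrid LS, 2} with parameters $v = 1$, $\rho = N^\varepsilon /T$, $C = X$, which contributes $\Lt (X^4 N^{2\varepsilon}/T^2 + X^2) \|\boldsymbol{a}_N \|_2^2$. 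Reinstating the prefactor $T^4 N^\varepsilon$ and using $X \leqslant T$ so that $T^2 X^4 \leqslant T^4 X^2$ produces the stated bound $T^4 X^2 N^\varepsilon \|\boldsymbol{a}_N \|_2^2$. The main technical obstacle is the uniform estimate on $\Delta_u G$: a naive bound would suffer from a $1/|u|^2$ singularity as $|u| \to 0$, and the Gaussian substitution $z = z'/(T|u|)$ combined with oscillatory IBP in $z'$ is precisely the mechanism by which this singularity is cancelled cleanly.
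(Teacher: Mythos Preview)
Your proposal is correct and follows essentially the same route as the paper. Your Gaussian Fourier expansion of $h(w/z)$ together with the Laplacian-in-$u$ identity reproduces exactly the formula and bounds for $\widehat{f}$ in Lemma~\ref{lem: bounds Fourier} (your $\Delta_u G$ is the paper's $\widehat f$ up to normalisation), and your final step differs only cosmetically: the paper drops $1/|q|^2$, completes the $q$-sum modulo $c$, and uses orthogonality to collapse to $\sumx_{\valpha}|\cdots|^2$, whereas you reach the same expression via Cauchy--Schwarz in $\valpha$; both feed identically into \eqref{11eq: hybrid LS, 2}.
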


\section{Analysis for the Fourier Integrals} 

For the separation of $m$ and $n$ in $w  = \pi (m-n)/ c$, we need to use the Fourier method. To this end, we  prove here some estimates for $ f (w; v)$ and $\widehat{f} (u; v)$.

\begin{lem}\label{lem: bounds for f}   We have
	\begin{equation}\label{10eq: bound for f, 1}
		 f (w; v) \Lt_{\gamma} \min \bigg\{ \frac {|w|^2} {|v|^{ 2\gamma}}, T^2 \bigg(\frac {T} {|v w|}\bigg)^{ 2\gamma}  \bigg\} ,
	\end{equation} 
for any integer $\gamma \geqslant 0$.
\end{lem}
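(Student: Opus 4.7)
The plan is to estimate $f(w;v)$ by integrating by parts in the Fourier integral \eqref{9eq: f (w;v)}. Since $\partial_z e[-vz] = -\pi i v \cdot e[-vz]$ and $\partial_{\bar z} e[-vz] = -\pi i \bar v \cdot e[-vz]$, I would perform $\gamma$ integrations by parts with each of $\partial_z$ and $\partial_{\bar z}$. The boundary terms at infinity vanish by the decay of the weight, and those near the origin vanish since $\eta$ is supported on $|z| \geq 1/2$, giving
\begin{equation*}
f(w;v) = \frac{|w|^2}{(-\pi i)^{2\gamma} |v|^{2\gamma}} \iint e[-vz] \, \partial_z^\gamma \partial_{\bar z}^\gamma \bigg( \frac{\eta(|z|)\, h(w/z)}{|z|^4} \bigg) \, dz.
\end{equation*}
The factor $|v|^{-2\gamma}$ is then automatic, and both claimed bounds reduce to an $L^1$-estimate of the differentiated weight in two different regimes.

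For the key derivative bound, I write $h(w/z) = e^{-g}$ with $g(z) = |w|^2/(T^2 |z|^2)$. Since $\partial_z g = -g/z$ and $\partial_{\bar z} g = -g/\bar z$, a straightforward induction yields
\begin{equation*}
\partial_z^j \partial_{\bar z}^k h(w/z) = \frac{P_{j,k}(g) \, e^{-g}}{z^j \bar z^k}
\end{equation*}
for certain polynomials $P_{j,k}$ of degree at most $j+k$. Because $P(g) e^{-g/2}$ is bounded on $[0,\infty)$ for any polynomial $P$, this gives the uniform estimate $|\partial_z^j \partial_{\bar z}^k h(w/z)| \ll_{j,k} h(w/z)^{1/2}/|z|^{j+k}$. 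Combined with $|\partial^k(|z|^{-4})| \ll |z|^{-4-k}$ and the fact that $\partial^k \eta(|z|)$ is $O(1)$ and (for $k \geq 1$) supported on $|z| \asymp 1$, Leibniz's rule gives
\begin{equation*}
\bigg| \partial_z^\gamma \partial_{\bar z}^\gamma \bigg( \frac{\eta(|z|) h(w/z)}{|z|^4} \bigg) \bigg| \ll_\gamma \frac{h(w/z)^{1/2}}{|z|^{4+2\gamma}}, \qquad |z| \geq 1/2,
\end{equation*}
with the $\eta$-derivative terms contributing harmlessly on $|z| \asymp 1$.

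The first bound then follows at once by taking $h^{1/2} \leq 1$ and using $\int_{|z| \geq 1/2} |z|^{-4-2\gamma} \, dz \ll_\gamma 1$, which yields $|f(w;v)| \ll_\gamma |w|^2/|v|^{2\gamma}$. For the second bound, I would keep the Gaussian and substitute $z = (|w|/T) s$ to normalise $h(w/z)^{1/2}$ to $\exp(-1/(2|s|^2))$:
\begin{equation*}
\int_{|z| \geq 1/2} \frac{h(w/z)^{1/2}}{|z|^{4+2\gamma}} \, dz = \bigg(\frac{T}{|w|}\bigg)^{\!\! 2+2\gamma} \int_{|s| \geq T/(2|w|)} \frac{\exp(-1/(2|s|^2))}{|s|^{4+2\gamma}} \, ds \ll_\gamma \bigg(\frac{T}{|w|}\bigg)^{\!\! 2+2\gamma},
\end{equation*}
since the essential singularity of $\exp(-1/(2|s|^2))$ at $s = 0$ tames the pole in $|s|^{-4-2\gamma}$, making the $s$-integrand absolutely integrable on $\BC$. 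Multiplying by $|w|^2/|v|^{2\gamma}$ produces $|f(w;v)| \ll_\gamma T^{2+2\gamma}/(|v|^{2\gamma} |w|^{2\gamma}) = T^2 (T/|vw|)^{2\gamma}$, which is the second bound.

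Nothing deep stands in the way of this plan; the only step requiring any care is the inductive identification of the polynomial-in-$g$ structure of $\partial_z^j \partial_{\bar z}^k h(w/z)$, which is routine once one notices that each complex derivative merely multiplies $h$ by $-g/z$ or $-g/\bar z$.
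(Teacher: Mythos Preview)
Your proof is correct and follows essentially the same route as the paper: integrate by parts $\gamma$ times via $\Delta = \partial_z \partial_{\bar z}$ (the paper packages this as Lemma~\ref{lem: Fourier}), bound the derivatives of $h(w/z)$, and then rescale to turn the Gaussian into a standard one. The only cosmetic differences are that the paper tracks $\partial^\beta h(w/z)$ as $(1+|w/z|/T)^{2\beta} h(w/z)/|z|^\beta$ rather than your cleaner $P(g)\,e^{-g}$ structure (yielding $h^{1/2}/|z|^\beta$), and the paper uses the inversion $z \mapsto w/(Tz)$ instead of your dilation $z = (|w|/T)s$; both lead to the same convergent integral.
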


It is then clear that $f (w; v)$ is of rapid decay in the first variable $w$ by Lemma \ref{lem: bounds for f}. 
Next we consider the Fourier transform: \begin{align*}
	\widehat{f} (u ; v) = \viint f (w; v) e [-  u w] \nd w. 
\end{align*}

\begin{lem}\label{lem: bounds Fourier}
 We have the formula
 \begin{align}\label{10eq: f, Fourier}
 	\widehat{f} (u; v)  = - \frac 1 {\pi^2} \frac {\partial^2} {  \partial u  \partial \widebar{u} } \viint \eta (|z|) k (\pi u z) e  [-   { v z}    ] \frac {  \nd z} {|z|^2}, 
 \end{align}
%where  $\Delta =  \partial^2 / \partial u  \partial \widebar{u} $ and %similar to {\rm\eqref{5eq: h(r), h(w)}}
\begin{align}
	k (z) = \pi T^2 \exp \left( -  T^2 |z|^2  \right), 
\end{align}
and consequently  the uniform bounds
\begin{align}\label{10eq: Fourier, 1}
	 	\widehat{f} (u/\pi ; v) \Lt     \frac {T^2} {|u|^2},   
\end{align} 
\begin{align}\label{10eq: Fourier, 1.1} 
	\widehat{f} (u/\pi ; v) \Lt     \bigg( 1 + \log \bigg( 1+   \frac 2 {T|u| } \bigg)  \bigg)  \frac {T^2} {|v|^2},  
\end{align} 
%if $  |u| \leqslant 1/K+1/P$, and 
and for $  |u| >  1/T$, 
\begin{align}\label{10eq: Fourier, 2}
	\widehat{f} (u/\pi ; v) \Lt  T^4 \,  \exp \left( -  T^2 |u/ 2|^2  \right) .  
\end{align}  
\end{lem}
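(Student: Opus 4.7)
The plan organizes the proof in four steps, matching the formula and the three bounds.

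To derive \eqref{10eq: f, Fourier}, interchange the $w$-Fourier transform with the $z$-integral in \eqref{9eq: f (w;v)}.  The identity $\partial^2_{u,\overline{u}}\, e[-uw] = -\pi^2 |w|^2\, e[-uw]$ gives $\viint |w|^2 g(w) e[-uw]\,\nd w = -\pi^{-2} \partial^2_{u,\overline{u}} \widehat{g}(u)$; applied to $g(w) = h(w/z)$, the change of variable $w = zw'$ together with the Gaussian Fourier identity $\widehat{h}(\xi) = \pi T^2 e^{-\pi^2 T^2 |\xi|^2} = k(\pi \xi)$ yields $\viint h(w/z) e[-uw]\,\nd w = |z|^2 k(\pi u z)$.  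The weight $1/|z|^4$ in \eqref{9eq: f (w;v)} combines with this $|z|^2$ to produce $1/|z|^2$, and swapping $\partial^2_{u,\overline{u}}$ past the $z$-integral by absolute convergence produces the stated formula.

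Executing the derivatives and substituting $u \mapsto u/\pi$ delivers the explicit form $\widehat{f}(u/\pi; v) = \pi T^4 \viint_\BC \eta(|z|)\,(1 - a^2 |z|^2)\, e^{-a^2 |z|^2}\, e[-vz]\,\nd z$ with $a = T|u|$.  For \eqref{10eq: Fourier, 1}, dominate $|1 - a^2|z|^2| \leq 1 + a^2|z|^2$ and apply the elementary Gaussian integrals $\viint_\BC |z|^{2k} e^{-a^2|z|^2}\,\nd z = \pi\, k!/a^{2k+2}$ to obtain $|\widehat{f}(u/\pi;v)| \Lt T^4/a^2 = T^2/|u|^2$.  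For \eqref{10eq: Fourier, 2} with $|u| > 1/T$, on $\mathrm{supp}(\eta) \subset \{|z| \geq 1/2\}$ split $e^{-a^2|z|^2} \leq e^{-a^2/4}\, e^{-a^2(|z|^2 - 1/4)}$, pull $e^{-a^2/4}$ outside, and bound the residual Gaussian trivially.

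The main obstacle is \eqref{10eq: Fourier, 1.1}, which requires extracting $|v|^{-2}$ decay.  The crucial cancellation is $\viint_\BC (1 - a^2 |z|^2) e^{-a^2|z|^2}\,\nd z = 0$, whose oscillatory refinement, by a direct Gaussian Fourier evaluation, reads
\[
\viint_\BC (1 - a^2|z|^2)\, e^{-a^2 |z|^2}\, e[-vz]\,\nd z = \frac{\pi^3 |v|^2}{a^4} \exp(-\pi^2 |v|^2/a^2),
\]
whose modulus is bounded by $C/|v|^2$ via $x^2 e^{-\pi^2 x^2} \Lt 1$ at $x = |v|/a$.  Writing $\eta = 1 - (1 - \eta)$, the complementary contribution is supported in $|z| \leq 1$, and integration by parts through $\Delta\, e[-vz] = -4\pi^2 |v|^2\, e[-vz]$ produces another $O(1/|v|^2)$ because $\viint |\Delta[(1-\eta)(1-a^2|z|^2)\, e^{-a^2|z|^2}]|\,\nd z = O(1)$ uniformly in $a$.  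The logarithmic factor $1 + \log(1 + 2/(T|u|))$ emerges in the small-$a$ regime from a delicate polar-coordinate treatment of the annulus $1/2 \leq |z| \leq 1/a$, where the Gaussian factor is essentially $1$ and the radial integral $\int_{1/2}^{1/a} \nd r/r = \log(2/a)$ appears after integrating out the angular Bessel kernel produced by the oscillation $e[-vz]$.
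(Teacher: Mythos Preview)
Your derivation of \eqref{10eq: f, Fourier} and the bounds \eqref{10eq: Fourier, 1}, \eqref{10eq: Fourier, 2} are correct and follow the paper's line, only more explicitly (you carry out $\partial^2_{u,\overline u}$ to reach the closed form $\pi T^4\int \eta(|z|)(1-a^2|z|^2)e^{-a^2|z|^2}e[-vz]\,\nd z$, whereas the paper leaves this as ``trivial estimation'').  One genuine slip: the interchange you justify ``by absolute convergence'' is \emph{not} absolutely convergent --- the quadruple integral sits exactly on the edge.  The paper fixes this by the regularization $|z|^{4}\to|z|^{4+\vepsilon}$ (or an application of Lemma~\ref{lem: Fourier}) and lets $\vepsilon\to 0$ at the end; you should do the same.

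For \eqref{10eq: Fourier, 1.1} your route diverges from the paper's.  The paper applies Lemma~\ref{lem: Fourier} (integration by parts on $A_{1/2}$) to the \emph{undifferentiated} integral in \eqref{10eq: f, Fourier}; after $\Delta_z$ hits $\eta(|z|)/|z|^2$ one term retains a factor $1/|z|^2$, and the radial integral $\int_{1/2}^{\infty}r^{-1}e^{-a^2r^2}\,\nd r$ is precisely what produces the logarithm.  You instead execute $\partial^2_{u,\overline u}$ first, which cancels the $1/|z|^2$, and then split $\eta=1-(1-\eta)$.  This is a legitimate alternative: the full-plane Gaussian is computed exactly as $\pi^3|v|^2a^{-4}\exp(-\pi^2|v|^2/a^2)=O(|v|^{-2})$, and the compactly supported $(1-\eta)$-piece is $O(|v|^{-2})$ by a single integration by parts, uniformly in $a$.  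But then your last paragraph is incoherent: having removed $1/|z|^2$, there is \emph{no} source of $\int \nd r/r$ in your integrand, and the ``annulus $1/2\le|z|\le 1/a$ / angular Bessel kernel'' story does not connect to the split you actually performed.  In your framework the bound comes out \emph{without} the logarithm.

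A final remark: both your argument and the paper's yield $O\!\big((1+\log)\,T^{4}/|v|^{2}\big)$, not $T^{2}/|v|^{2}$; the exponent $2$ in the statement appears to be a typo (note that only $T^{4}/|v|^{2}$ is used in \S\ref{sec: dual sum} to obtain \eqref{8eq: S0(a;X)}).  So your split is in fact a clean proof of what is actually needed --- just drop the spurious logarithm paragraph and flag the convergence issue.
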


\begin{remark}
	Note that our bound {\rm\eqref{10eq: Fourier, 2}} is stronger than Young's {\rm(8.3)}  in \cite{Young-GL(3)-Special-Points} thanks to the fact that the Gaussian is self-dual under the Fourier transform. Consequently, our later analysis in {\rm\S \ref{sec: dual sum}} will be much simpler.  
\end{remark}

%For convenience, we introduce the notation of annuli as follows. 

%\begin{defn}\label{defn: annulus AI}
%	For an interval $ I   \subset ( 0, \infty)$ define the annulus
%	 \begin{align*}%\label{10eq: defn of A(I)}
%	 	A_I = \big\{ z   : |z| \in I \big\}.
%	 \end{align*}
%\end{defn}

The next simple lemma on Fourier integrals will be useful. % in the proof of Lemmas \ref{lem: bounds for f} and \ref{lem: bounds for f, Fourier}. 

%Note that $A_{(0, r)} $ is the dented disk $D_r \smallsetminus \{0\}$. 

\begin{lem}\label{lem: Fourier}
Let $A_{\rho}$ denote $\BC \smallsetminus D_{\rho}$.	 Suppose that $ f \in C^{\infty} (\BC)$  is supported on  $A_{\rho}$ and satisfies
	 \begin{align*}
  \frac {\partial^{\valpha+\beta} f (z) } {\partial z^{\valpha} \partial \widebar{z}^{\beta} } 	 %z^{  \valpha } \shskip \widebar z^{ \beta} (\partial /\partial z)^{\valpha} (\partial / \partial \widebar z)^{\beta} f (z) 
	 \Lt_{  \valpha,   \beta }   \frac 1 { |z|^{\theta + \valpha + \beta}}  
	 \end{align*}
 for some  $\theta > 1$.  Then 
 \begin{align*}
 	\viint_{A_{\rho} }  f (z) e [- v z] \nd z = \frac 1 {(  \pi i |v|)^{2\gamma} } \viint_{A_{\rho} } \Delta^{\gamma} f (z) e [- v z] \nd z, \qquad \Delta = \partial^2 / \partial z  \partial \widebar{z}, 
 \end{align*}
for any integer $\gamma \geqslant 0$. 
\end{lem}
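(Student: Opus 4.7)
The plan is to verify the identity by iterated integration by parts, exploiting that $e[-vz]$ is an eigenfunction of the Wirtinger Laplacian $\Delta = \partial^2 / \partial z \partial \widebar{z}$.

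The first step is the key algebraic identity. Writing $z = x+iy$ and $v = v_1 + i v_2$ so that $e[-vz] = \exp (-2\pi i (v_1 x - v_2 y))$, a short calculation using $\partial/\partial z = \frac{1}{2}(\partial_x - i \partial_y)$ and $\partial/\partial \widebar{z} = \frac{1}{2}(\partial_x + i \partial_y)$ yields
\begin{align*}
\tfrac{\partial}{\partial z} e[-vz] = -\pi i v \cdot e[-vz], \qquad \tfrac{\partial}{\partial \widebar{z}} e[-vz] = -\pi i \widebar{v} \cdot e[-vz].
\end{align*}
Composing the two gives $\Delta e[-vz] = (\pi i |v|)^{2} e[-vz]$, and by induction $\Delta^{\gamma} e[-vz] = (\pi i |v|)^{2\gamma} e[-vz]$ for every integer $\gamma \geqslant 0$.

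Next, I would apply Green's identity (self-adjointness of $\Delta$) $\gamma$ times to transfer $\Delta^{\gamma}$ from $f$ onto the exponential, so that
\begin{align*}
\viint_{A_{\rho}} \Delta^{\gamma} f (z) \cdot e[-vz] \nd z = \viint_{A_{\rho}} f(z) \cdot \Delta^{\gamma} e[-vz] \nd z = (\pi i |v|)^{2\gamma} \viint_{A_{\rho}} f(z) e[-vz] \nd z,
\end{align*}
which is exactly a rearrangement of the claim.

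The only substantive obstacle is justifying the vanishing of the boundary contributions at each of the $\gamma$ IBP steps. On the inner boundary $|z| = \rho$ this is automatic: since $f$ is $C^{\infty}$ on $\BC$ and vanishes identically on the open disc $D_{\rho}$, every Wirtinger derivative of $f$ vanishes on $\partial D_{\rho}$, so all inner boundary terms drop out. On the outer boundary $|z| = R$ the hypothesis $|\partial^{\alpha+\beta} f / \partial z^{\alpha} \partial \widebar{z}^{\beta}| \Lt |z|^{-\theta-\alpha-\beta}$ with $\theta > 1$ makes each circular boundary integrand of size $R^{-\theta-k}$, so the boundary term is $O (R^{1-\theta-k}) \to 0$ as $R \to \infty$. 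Once these two decay checks are tracked through, the identity is immediate.
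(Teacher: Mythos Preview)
Your argument is correct: the eigenvalue computation $\Delta\, e[-vz] = (\pi i |v|)^{2} e[-vz]$ is right (most cleanly seen from $e[-vz] = \exp(-\pi i (vz + \widebar{v}\widebar{z}))$), and the iterated Green identity transfers $\Delta^{\gamma}$ as you describe, with the inner boundary terms killed by the vanishing of $f$ and all its derivatives on $\partial D_{\rho}$ and the outer ones by the decay hypothesis. The paper itself states this lemma without proof, treating it as a routine integration-by-parts fact, so your write-up simply supplies the standard details that the author omitted.
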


\begin{proof}[Proof of Lemma \ref{lem: bounds for f}]
	For brevity, let $\partial^{\gamma} $ denote any partial derivative of degree $\gamma$ (namely, in the form $\partial^{\valpha+\beta} / \partial z^{\valpha} \partial \widebar{z}^{\beta} $, $\gamma = \valpha + \beta$).   
	
	Recall from  \eqref{9eq: f (w;v)} that  
	\begin{align*}
		f (w; v ) =  {|w |^2}   \viint_{A_{1/2}}  \eta (|z|) h   (   w/z    ) e  [-   { v z}    ]   \frac {\nd z} {|z|^4}  . 
	\end{align*}
As $ h (z) = \exp  ( - |z/T|^2  )$ (see \eqref{5eq: h(w)}), it follows by induction that
$\partial^{\beta} (h (w/z)) $ is a linear combination of all the terms occurring in the expansions of
\begin{align*}
  \bigg(  \frac 1 {T^2}  \bigg( \frac {w} {z} + \frac {\widebar{w}}  {\widebar{z}}\bigg)^2    \bigg)^{\valpha}  \lp\frac 1 {z} + \frac 1 {\widebar{z}}  \rp^{\beta}  h \lp \frac {w} {z} \rp, \qquad \valpha = 1, 2, ..., \beta,  
\end{align*}
and hence
\begin{align*}
	\partial^{\beta} (h (w/z)) \Lt_{\beta} \bigg( 1 +   \frac {|  w/z  |}  {T}   \bigg)^{2\beta} \frac {h(w/z)} {|z|^{\beta}} .
\end{align*}
%$(\partial / \partial z)^{\valpha} (\partial/\partial \widebar{z})^{\beta} h (z)$ is a linear combination of 
%\begin{align*}
%	  \frac {\partial^{\valpha+\beta} h (x+iy) } {\partial x^{\valpha} \partial y^{\beta} } \Lt \bigg(  1 +  \frac {|x|} {K^2} + \frac {|y|} {P^2}  \bigg)^{\valpha + \beta}   h (x+iy) ,
%\end{align*}  
Consequently,  
\begin{align*}
\Delta^{\gamma } \bigg(\frac {\eta (|z|) h (w/z)} {|z|^4}\bigg) \Lt_{\gamma} \bigg(1 +   \frac {|  w/z  |}  {T} \bigg)^{4\gamma} \frac {h (w/z)} {|z|^{ 2\gamma+4}}  
\end{align*}
for any $z \in A_{1/2}$. By Lemma \ref{lem: Fourier} and the change of variable $ z \ra  w/ T z $, we have
\begin{align*}
	f (w; v)   \Lt \frac {T^{2\gamma + 2}} {|v w|^{2\gamma } } \viint_{\overbar{D}_{  2|w| / T  }}  (1 +     {|  z  |}    )^{4\gamma}  |z|^{2\gamma} \exp   ( - |z|^2   ) \nd z , 
%	& = 2\pi \frac {T^{2\gamma + 2}} {|v w|^{2\gamma } } \int_0^{ 2|w / T| }  (1 +     r   )^{4\gamma}  r^{2\gamma} \exp   ( - r^2   ) \nd r, 
\end{align*}
and hence the bound in \eqref{10eq: bound for f, 1} by trivial estimation (in the polar coordinates).  
\end{proof}

\begin{proof}[Proof of Lemma \ref{lem: bounds Fourier}] 
	By definition, we have 
	\begin{align*}
		 \widehat{f} (u; v) & =       \viint |w|^2 e  [  -  u w] \bigg(\viint  \eta (|z|) h   (   w/z    ) e  [-   { v z}   ]   \frac {\nd z } {|z|^4}\bigg) \nd w \\
		 & = - \frac 1 {\pi^2} \frac {\partial^2} {  \partial u  \partial \widebar{u} }  \viint  \viint  \eta (|z|) h   (   w/z    ) e  [-   { v z}     -  u w]   \frac {\nd z \nd w} {|z|^4}. 
	\end{align*}
Then we obtain \eqref{10eq: f, Fourier} by reversing the order of integrations and a direct evaluation of the $w$-integral. However, the quadruple integral  is {\it not}  (but on the edge) absolutely integrable. This issue may be easily addressed by replacing $|z|^{4}$ by $|z|^{4+\vepsilon}$ in the denominator and letting $\vepsilon \ra 0$ at the end. Alternatively, similar to \cite{Young-GL(3)-Special-Points}, one may apply Lemma \ref{lem: Fourier} ($\gamma = 1$ is enough) to produce extra $1/z$ or $1/ \widebar{z}$ and then apply Lemma \ref{lem: Fourier} in reverse  at the end.  

The bounds \eqref{10eq: Fourier, 1},   \eqref{10eq: Fourier, 1.1}, and \eqref{10eq: Fourier, 2} follow from \eqref{10eq: f, Fourier} by trivial estimation (in the polar coordinates), in the second case with an application of Lemma \ref{lem: Fourier}. For example, note that trivially
\begin{align*}
 \frac {\partial^2 k (  u z) } {  \partial u  \partial \widebar{u} }   \Lt  T^4   (1 + T |uz|   )^2 \cdot |z|^2  k (  u z) . 
\end{align*}  
For  \eqref{10eq: Fourier, 1} and \eqref{10eq: Fourier, 1.1}, except for a term containing $1/|z|^2$ in the second case (this yields the logarithmic term),  we may as well extend the integration onto the whole complex plane.  
% For \eqref{10eq: Fourier, 2} we work in the polar coordinates and bound $k (z)$ by the radial Gaussian $ 
\end{proof}

\section{Proof of Propositions \ref{prop: Z} and \ref{prop: S}}
\label{sec: proof of props}

\subsection{Treating the Zero Frequency} \label{sec: zero frequency}
Let us first prove Proposition \ref{prop: Z}. Set $ \psi (x) = \eta (1/x) -1 $. Note that $\psi' (x)$ is supported on $[1,2]$. %By the change $z \ra w/z$   w
We have
\begin{align*}
f (w; 0) = |w|^2	\viint  \eta (|z|) h   (   w/z    )   \frac {\nd z} {|z|^4}   =   \viint   h   (   z   )     {\nd z}   + |w|^2 \viint  \psi (|z |) h   (  w z   )     {\nd z}   .
\end{align*}
Recall from \eqref{5eq: h(w)} that $ h (z) = \exp  ( - |z/T|^2  )$, so the first integral is $\pi T^2$ and hence  contributes 
\begin{align*}
	\pi^3 T^2  \sum_{|c| \leqslant X}  \frac 1 {|c |^4}	\bigg( \sum_{n}    {a}_{(n)}   S(n, 0; c)\bigg)^2 = \pi^3   \varSigma (\boldsymbol{a}) + O \bigg(\frac {T^2 N^{2+\vepsilon} } {X^2} \|\boldsymbol{a}_{N} \|_2^2  \bigg), 
\end{align*}
where  the tail of the sum has been estimated trivially by  \eqref{5eq: Ramanujan}.  
In the polar coordinates, the second integral may be reformulated as
\begin{align*}
 2\pi |w|^2  	\int_1^{2}  \psi  (     x    ) x \exp \left(- |wx/T|^2 \right) \nd x  =  \pi T^2    \int_1^{2} \psi'  (  x  )   \exp \left(- |wx/T|^2 \right) \nd x   , 
\end{align*}
 by partial integration.
Thus one is reduced to estimating the sum 
\begin{align*}%\label{11eq: error}
T^2 \sum_{|c| \leqslant X}  \frac 1 {|c |^4}	\mathop{\sum\sum}_{m, n}   {a}_{(m)}  {a}_{(n)} S (m, 0; c) S(n, 0; c) \exp \left( - \Big|\pi \frac {m-n} {c}\Big|^2 \frac {x^2} {T^2} \right) , 
\end{align*}
for any $ 1 \leqslant x \leqslant 2 $. Then separate the variables $m$ and $n$ by 
\begin{align*}
	\exp \left(-   |w x / T|^2 \right) = \frac {1}  {\pi}   \viint \exp  \left(  2 i \Re ( wxz /T)  -   |  z |^2\right) \nd z , 
\end{align*}
so that the sum above is turned into
\begin{align*}
	\frac {T^2} {\pi} \viint \exp (-   |  z |^2) \sum_{|c| \leqslant X}  \frac 1 {|c |^4} \bigg| \sum_{n} a_{(n)} S(n, 0; c) e \Big[ \frac {  n x z  } {c T} \Big] \bigg|^2 \nd z. 
\end{align*}
Next open the Ramanujan sum and  apply Cauchy--Schwarz to bound this by
\begin{align*}
	\frac {T^2} {\pi} \viint \exp (-   |  z |^2) \sum_{|c| \leqslant X}  \frac 1 {|c |^2} \sumx_{   \valpha  (\mathrm{mod} \, c) }   \bigg| \sum_{n} a_{(n)} e \Big[ \frac {\valpha n} {c} \Big] e \Big[ \frac {  n x z  } {c T} \Big] \bigg|^2 \nd z, 
\end{align*}
 so that   \eqref{11eq: hybrid LS, 2} in Corollary \ref{cor: hybrid LS} immediately yields the bound  $O  \big(  T^2 (X^2  + T^2)  N^{\vepsilon}  \|\boldsymbol{a}_{N} \|_2^2 \big)  = O  \big(  T^4 N^{\vepsilon} \|\boldsymbol{a}_{N} \|_2^2 \big)$.  Thus the proof of Proposition   \ref{prop: Z} is completed.
 
  \subsection{Treating the Dual Sum} \label{sec: dual sum}
  Now we consider the dual sum $S (\boldsymbol{a}; X) $ as given in \eqref{9eq: S (a;X)} and prove Proposition \ref{prop: S}.  
  
%For brevity, let us denote $ E = K/P+P/K $ throughout this subsection. 
First of all, the bound \eqref{10eq: bound for f, 1} for $f (w; v)$ in Lemma \ref{lem: bounds for f} suggests that we may assume in practice that $ |q| \leqslant |c|   N^{\vepsilon}$ as in our case $v = q/c$.  By Fourier inversion, 
\begin{align*}
\pi^2 	f \Big(   \pi \frac {m-n} { c}; \frac {q} {c} \Big) = \viint \widehat{f} \Big(\frac u \pi; \frac {q} {c} \Big) e \Big[ \frac {m-n} {c} u  \Big]  \nd u, 
\end{align*} 
and the bound \eqref{10eq: Fourier, 2} for $\widehat{f} (u/\pi; v)$ in Lemma \ref{lem: bounds Fourier} suggests that we may restrict the integration on the disc $\overbar{D}_{  N^{\vepsilon}/T}$.  
It follows that up to an error $O \big( \|\boldsymbol{a}_{N} \|_2^2 \big)$, say, the dual sum $S (\boldsymbol{a}; X)$ is equal to
\begin{equation*}%\label{10eq: S total}
\begin{split}
    \sum_{|c| \leqslant X} \frac 1 {|c|^4}	\sum_{0 < |q| \leqslant |c|   N^{\vepsilon}     }    \viint_{\overbar{D}_{  N^{\vepsilon}/T}}   \widehat{f} \Big(\frac u \pi; \frac {q} {c} \Big)  \bigg|  \sum_{ n}    {a}_{(n)}  S (n, q; c) e \Big[ \frac {n} {c} u  \Big] \bigg|^2 \nd u  . 
\end{split}
\end{equation*}
 Now \eqref{8eq: S(a;X)=S0(a;X)} and \eqref{8eq: S0(a;X)} are a direct consequence of  the bound \eqref{10eq: Fourier, 1.1}   for $\widehat{f} (u/\pi ; v)$ in Lemma \ref{lem: bounds Fourier}; one needs to bound trivially the logarithmic term on a very small disc $\overbar{D}_{1/N^{4+\vepsilon}}$, say, so that the error is still $O \big( \|\boldsymbol{a}_{N} \|_2^2 \big)$. 

\delete{Next we wish to shrink the integral domain $\overbar{D}_{\rho N^{\vepsilon}}$   into $\overbar{D}_{\rho}$.  To this end, we apply the uniform bound $ \widehat{f}  (  u /\pi;   {q}/ {c}  ) = O \big(I  (K+P)^4 \big)$ on $\overbar{D}_{\rho N^{\vepsilon}} \smallsetminus \overbar{D}_{\rho}$, extend the sum over $q$ to $O(I^2 N^{\vepsilon})$ many complete sums modulo $c$,  open the square and the Kloosterman sums, and execute the summation over $q$. It follows that the error is bounded by 
\begin{align*}
	I^3   (K+P)^4 N^{\vepsilon}  \sum_{|c| \leqslant X} \frac 1 {|c|^2}	     \viint_{\overbar{D}_{\rho N^{\vepsilon}}   } \, \sumx_{\valpha (\mod c)}   \bigg|  \sum_{ n}    {a}_{(n)}  e \Big[  \frac {\valpha n} {c} \Big]  e \Big[ \frac {n} {c} u  \Big] \bigg|^2 \nd u  ,
\end{align*} 
and an application of  \eqref{11eq: hybrid LS, 2}  yields the error  bound $ O \big(I^3   (K+P)^4 N^{\vepsilon} \|\boldsymbol{a}_{N} \|_2^2 \big) $. Note that $ X \rho \leqslant 1$ by our assumption \eqref{8eq: X}.}

 %; the logarithmic term may be thrown away by applying \eqref{11eq: hybrid LS, 1} on a very small disc. 

It is left to prove that $ S_0   (\boldsymbol{a}; X)$ has the bound in  \eqref{8eq: S(a;X)}. To this end,  in \eqref{8eq: S0(a;X)} we remove the $1/|q|^2$ (use trivially $1/|q|^2 \leqslant 1$), extend the sum over $q$ to $O( N^{\vepsilon})$ many complete sums modulo $c$,  open the square and the Kloosterman sums, and execute the summation over $q$.  It follows that
\begin{align*}
	S_0 (\boldsymbol{a}; X) & \Lt T^4 N^{\vepsilon} \sum_{|c| \leqslant X}    \viint_{\overbar{D}_{  N^{\vepsilon} /T }   } \, \sumx_{\valpha (\mod c)}   \bigg|  \sum_{ n}    {a}_{(n)}  e \Big[  \frac {\valpha n} {c} \Big]  e \Big[ \frac {n} {c} u  \Big] \bigg|^2 \nd u , 
\end{align*}
and an application of  \eqref{11eq: hybrid LS, 2}  in Corollary \ref{cor: hybrid LS} yields   $ O \big(  T^4 X^2   N^{\vepsilon} \|\boldsymbol{a}_{N} \|_2^2 \big)   $.

%up to a negligible error due to the truncation of the $q$-sum. 

\delete{Next, we partition the  integral region into  $D_{1/(K+P)}$ and  $O (\log N)$ many dyadic annuli $A_{[\rho, 2 \rho]} $, and accordingly write (split the right-hand side of \eqref{10eq: S total}) 
\begin{align*}
S (\boldsymbol{a}; X) =   S_{0} (\boldsymbol{a}; X) + \sum S_{\rho}  (\boldsymbol{a}; X) +   O \big( \|\boldsymbol{a}_{N} \|_2^2 \big). 
\end{align*}
}

\delete{
For $ S_{\infty} (\boldsymbol{a}; X) $ the integration on $A_{[X^{1+\vepsilon}, \infty)}$ discards all the off-diagonals (to see this, insert an auxiliary cut-off function $\eta (u/X^{1+\vepsilon})$, open the square,  and apply Lemma \ref{lem: Fourier}), and hence 
\begin{align*}
	S_{\infty} (\boldsymbol{a}; X) \Lt 
\end{align*}
by the bound \red{????????} for $ \widehat{f}  (u /\pi;   {q} / {c}  )$ and the Weil bound for $S (n,q;c)$. }

\section{Proof of Proposition \ref{prop: C(a)}} 

On the  identities or asymptotics in \eqref{6eq: after Kuz}, \eqref{7eq: P=Q}, \eqref{7eq: Q(a)=Q(a;X)}, \eqref{8eq: Q(a)=Z(a)+S(a)}, \eqref{9eq: Z, asymp}, and \eqref{8eq: S(a;X)}, we have established 
\begin{equation*}%\label{12eq: C(a)+E(a)}
\begin{split}
	\SC  (\boldsymbol{a}) + \SE    (\boldsymbol{a}) =  \frac {1} {32 } \varSigma (\boldsymbol{a}) + O \bigg(   \bigg( T N^2 + \frac { N^3} {T^2}  + \frac {T^2 N^2} {X^2} + T^4 X^2  \bigg) N^{\vepsilon} \|\boldsymbol{a}_{N} \|_2^2 \bigg),
\end{split}
\end{equation*}
for any $ 1\leqslant X \leqslant T$. Then \eqref{1eq: C(a)+E(a)}  follows if we choose  $	X = \min \big\{    {\sqrt{N/ T}}  , \allowbreak T \big\}$. %The former yields \eqref{1eq: C(a)+E(a)} in Proposition \ref{prop: C(a)} on the choice $	X = \min \big\{    {\sqrt{N/ T}}  , T \big\}$  while the latter is literally \eqref{1eq: C(a)+E(a), 2} in Proposition   \ref{prop: C(a), 2}. 

\section{Proof of Proposition \ref{prop: E(a)}} 

In this section, we investigate the Eisenstein $\SE (\boldsymbol{a}) $ as defined in \eqref{2eq: sum, Eis} and prove its asymptotic formula in \eqref{2eq: asymptotic E(a)}. 

We start with the Ramanujan identity (\cite[(2.18)]{B-Mo}):
\begin{align*}
\frac {\sigma_{1-s,   p}  (n) } {\zeta (s,  p)}  = 	\sum_{(c) \subset \CaloO } \frac {S (n, 0; c) \vchi_{ 4 p} (c)}  {|c|^{2s}}    , \qquad \text{($\Re (s) > 1$)} ,
\end{align*} 
where 
\begin{align}\label{13eq: sigma}
	 \sigma_{s, p} ( n ) = \sum_{(d) | (n)} \vchi_{ 4p} (d) |d|^{2s} . 
\end{align}
For $\Re (s)=1$, by the same argument of \S 13.3 by Lemma 13.2 in \cite{Titchmarsh-Riemann}, it follows that  
\begin{align}\label{13eq: Ramanujan}
	\frac { \sigma_{1-s,   p}  (n) } {\zeta (s, p)}   = \sum_{(c): \, \log |c| \leqslant Y^{  \vepsilon} } \frac {S (n, 0; c) \vchi_{ 4 p} (c)}  {|c|^{2s}} + O_{\vepsilon} (1/Y),   
\end{align}
as long as $|\Im ( s)| \leqslant Y$ (no constraint on $p$ is needed).

As $h (\vkappa, p) = \exp \left( - |i\vkappa + p|^2/ T^2 \right)$ and $   \vchi_{ i\vkappa,   p} (z) = \exp \left(   i \vkappa \log |z| +   i p \arg (z)  \right) $, we have
\begin{align}\label{13eq: integral}
  \viint_{ \BSA } h (2\vkappa, 4 p)  \vchi_{2i\vkappa, 4 p} (z)     \nd \mu (\vkappa, p ) = k (\log |z|) \, \theta (2 \arg (z) ), 
\end{align}
where, similar to \eqref{4eq: defn of h, theta}, 
\begin{align} \label{13eq: defn of h, theta} 
	k   (r )   =  \frac{\sqrt{\pi} T} {2} \exp \left(     -  (T r / 2)^2   \right), \quad 	 
	\theta  (\omega) =  \frac {\sqrt{\pi} T} 4 \sum  \exp \left(     - (T (\omega + \pi p )/ 4 )^2 \right)    . 
\end{align} 

By \eqref{1eq: tau s} and \eqref{13eq: sigma},  
$$\sigma_{2s, 2 p} (n) = \tau_{s, p} (n) 	\vchi_{2s,  4 p} (n) ,$$  
so    \eqref{2eq: sum, Eis} may be expanded and rewritten  as 
\begin{align}	\label{13eq: sum, Eis}	
	\SE (\boldsymbol{a}) =	 \frac 1 {\pi} \mathop{\sum\sum}_{m, n} a_{(m)} a_{(n)}	\viint_{ \BSA } h (2\vkappa, 4 p) \frac {\sigma_{2i\vkappa, 2p} (m/n) } { | \zeta (1+2 i \vkappa, 2 p)   |^2  }    \nd \mu (\vkappa, p ) . 
\end{align}
 Now let $s = 1 \pm 2 i \vkappa$,  $Y = T^{2 }$ in  \eqref{13eq: Ramanujan} and $z = b/c$ in \eqref{13eq: integral}, it follows     that
\begin{align*}
\SE (\boldsymbol{a}) =	\frac 1 {\pi} \mathop{\sum\sum}_{m, n} a_{(m)} a_{(n)}  X (m, n) + O \big(N^{2+\vepsilon} \|\boldsymbol{a}_{N} \|_2^2 \big),
\end{align*}
where 
\begin{align*}
	X (m, n)  = \mathop{\mathop{\sum\sum}_{(b),  (c)}}_{\, \log  |b|,  \log |c| \leqslant T^{\vepsilon} }    \frac {S(m, 0; b) S(n, 0; c)} {|bc|^2 } \cdot   {k  (\log |b/c|  ) \, \theta  (2 \arg (b/c)  )}   . 
\end{align*} 
Next, according to $(b) = (c)$ or  $(b) \neq (c)$ in the sum $X (m, n)$,  we split 
\begin{align*}
	\SE (\boldsymbol{a} ) = \SE_0 (\boldsymbol{a} ) + \SE_{\flat} (\boldsymbol{a} ). 
\end{align*}
Clearly, 
\begin{align*}
	\SE_0 (\boldsymbol{a} ) = \frac {k (0) \theta (0)} {4 \pi}   \mathop{\sum\sum}_{m, n} a_{(m)} a_{(n)} \sum_{  \log |c| \leqslant T^{  \vepsilon}}    \frac {S(m, 0; c) S(n, 0; c)} {|c|^4 }, 
\end{align*}
and if we approximate  $k (0) \theta (0) $ by $ \pi T^2 / 8 $ and estimate the tail of the $c $-sum by \eqref{5eq: Ramanujan}, then it follows that
\begin{align}\label{13eq: E0}
	\SE_0 (\boldsymbol{a} ) = \frac 1 {32} \varSigma (\boldsymbol{a}) + O  \big(N^{2+\vepsilon} \|\boldsymbol{a}_{N} \|_2^2 \big). 
\end{align}
%Note that $k (0) \theta (0) $ is equal to $ \pi T^2 / 8 $ up to an exponentially small error.  %O \left(T^2 \exp \left(- (\pi  T/ 4)^2 \right) \right) $. 
Since, in view of \eqref{13eq: defn of h, theta},  $k (r) $ and $\theta (\omega)$ are of exponential decay, we have
\begin{align*}
\SE_{\flat} (\boldsymbol{a}) \Lt T^2	{\mathop{\mathop{\mathop{\sum\sum}_{(b)\neq (c)}}_{\, \log  |b|,  \log |c| \leqslant T^{\vepsilon} } }_{ |\log |b/c|| , |\arg (b/c)|  \leqslant T^{\vepsilon} / T }}  \mathop{\sum\sum}_{m, n}  | a_{(m)} a_{(n)}  | \frac{|S(m,0; b) S(n,0; c)|} {|bc|^2} +  N^{2+\vepsilon}  \|\boldsymbol{a}_{N} \|_2^2  ,
\end{align*}
then,  by the AM--GM inequality and by symmetry,   the sum is bounded by
\begin{align*}
T^2 \sum_{(c)} \frac 1 {|c|^4}  \sum_{(b) \neq (c)}    \bigg(  \sum_{ n }   \big| a_{(n)} S (n, 0; c) \big|   \bigg)^2 , 
\end{align*} 
where the summation over $(b)$ and $(c)$ of course is subject to the conditions inherited from the above. 
Note that the number of ideals  $(b) $ is bounded  by $O \big( | c|^2   T^{\vepsilon} / T^2 \big)$, since one may as well extend  the range of $b$ onto the punctured  disc $\overbar{D}_{  |c|     T^{\vepsilon} / T  }  (c) \smallsetminus \{c\} $ (in the first quadrant). Thus an estimation by \eqref{5eq: Ramanujan} yields
 \begin{align}\label{13eq: Ef}
\SE_{\flat} (\boldsymbol{a}) 	\Lt   N^{2+\vepsilon} \|\boldsymbol{a}_{N} \|_2^2 . 
 \end{align}
By \eqref{13eq: E0} and \eqref{13eq: Ef} we complete the proof of Proposition \ref{prop: E(a)}.

\section{Proof of Theorem \ref{prop: C(a), 2}}

In view of \eqref{8eq: S(a;X)=S0(a;X)} and \eqref{8eq: S0(a;X)} in Proposition \ref{prop: S}, we may refine \eqref{1eq: C(a)+E(a)} into: 
\begin{equation*}
	\begin{split}
		\SC  (\boldsymbol{a}) + \SE    (\boldsymbol{a}) =  \frac {1} {32 } \varSigma (\boldsymbol{a}) + S_0 (\boldsymbol{a}; X) + O \bigg(   \bigg( T N^2 + \frac { N^3} {T^2}  + \frac {T^2 N^2} {X^2}    \bigg) N^{\vepsilon} \|\boldsymbol{a}_{N} \|_2^2 \bigg),
	\end{split}
\end{equation*} 
while we have just proven \eqref{2eq: asymptotic E(a)}: 
\begin{align*} 
	\SE  (\boldsymbol{a}) =	 \frac {1} {32 } \varSigma (\boldsymbol{a}) +  O \big(  N^{2+\vepsilon} \|\boldsymbol{a}_{N} \|_2^2 \big) ,
\end{align*}
and hence  Theorem \ref{prop: C(a), 2} is a direct consequence.

\def\cprime{$'$}

\end{document}